\newcommand{\ie}{i.e.}
\newcommand{\et}{\textit{et al.}}
\newcommand{\norm}[1]{\left\lVert#1\right\rVert}
\newcommand{\abs}[1]{\left|#1\right|}
\newcommand{\xkh}[1]{\left(#1\right)}
\newcommand{\dkh}[1]{\left\{#1\right\}}
\newcommand{\lan}[1]{\left\langle#1\right\rangle}
\begin{document}
	\title[Linear stability analysis of the Couette flow for 2D compressible NSP]
		  {Linear stability analysis of the Couette flow for 2D compressible Navier-Stokes-Poisson system}
	\author{Yurui Lu\ \ \ \ and\ \ \ \  Xueke Pu}
	
	\address{Yurui Lu \newline
		School of Mathematics and Information Sciences, Guangzhou University, Guangzhou 510006, P.R. China}
	\email{ luyuruigzhu@163.com}

	\address{Xueke Pu \newline
		School of Mathematics and Information Sciences, Guangzhou University, Guangzhou 510006, P.R. China}
	\email{puxueke@gmail.com}

	\thanks{This work was supported in part by the National Natural Science Foundation of China (12471220) and the Natural Science Foundation of Guangdong Province of China (2024A1515012467).}
	\subjclass[2020]{35M30; 35Q35} \keywords{Navier-Stokes-Poisson; Couette flow; Linear stability}

\begin{abstract}
	In this paper, we study the linear stability of Couette flow for 2D compressible Navier-Stokes-Poisson system at high Reynolds number in the domain $\mathbb{T}\times\mathbb{R}$ with initial perturbation in Sobolev spaces. We establish the upper bounds for the solutions of linearized system near Couette flow. In particular, we show that the irrotational component of the perturbation may have a transient growth, after which it decays exponentially.
\end{abstract}

\maketitle \numberwithin{equation}{section}
\newtheorem{proposition}{Proposition}[section]
\newtheorem{theorem}{Theorem}[section]
\newtheorem{lemma}[theorem]{Lemma}
\newtheorem{remark}[theorem]{Remark}
\newtheorem{hypothesis}[theorem]{Hypothesis}
\newtheorem{definition}{Definition}[section]
\newtheorem{corollary}{Corollary}[section]
\newtheorem{assumption}{Assumption}[section]
\section{\textbf{Introduction}}
\setcounter{section}{1}\setcounter{equation}{0}
	In this paper, we consider the compressible two-fluid Navier-Stokes-Poisson system 
	\begin{align}
		\label{two-fluid model equ 1}
		&\partial_{t}\tilde{\eta}_{\pm}+\nabla\cdot(\tilde{\eta}_{\pm}\tilde{v}_{\pm})=0,   \\
		&\tilde{\eta}_{\pm}m_{\pm}(\partial_{t}\tilde{v}_{\pm}+\tilde{v}_{\pm}\cdot\nabla\tilde{v}_{\pm})+\frac{1}{M^2}T_{\pm}\nabla\tilde{\eta}_{\pm}=\mp e \tilde{\eta}_{\pm}\nabla\tilde{\phi}+\nu\Delta\tilde{v}_{\pm}+\lambda\nabla \mathrm{div} \tilde{v}_{\pm},   \\
		\label{two-fluid model equ 3}
		&-\Delta\tilde{\phi}=4\pi e(\tilde{\eta}_{+}  -  \tilde{\eta}_{-} ) ,
	\end{align}
	in 2D domain $ \mathbb{T}\times \mathbb{R} $, where $\mathbb{T}=\mathbb{R}/ \mathbb{Z}$. Here, $\tilde{\eta}_{\pm}$, $\tilde{v}_{\pm}$, $m_{\pm}$, $T_{\pm}$ denote the density, velocity, mass, effective temperature, respectly, of ion fluid ($+$) and electron fluid ($-$), and $\tilde{\phi}$ denotes the electrostatic potential. The constant $M$ is the Mach number and $\nu, \lambda$ are shear and bulk viscosity coefficients of the fluid, respectly. Such a system is a fundamental model in plasma physics, derived from Navier-Stokes-Maxwell system as the light speed tends to infinity. It describes the dynamics of ion fluid and electron fluid with interaction of self-consistent electric field.
	
	Since the mass ratio of electron and ion has a scale of $10^{-3} \ll 1$ , the system (\ref{two-fluid model equ 1})-(\ref{two-fluid model equ 3}) can be simplified to the following one-fluid Navier-Stokes-Poisson system under ion background of zero velocity and constant density
	\begin{align}
		\label{elec NSP equ 1}
		&\partial_{t}\tilde{\eta}_{-}+\nabla\cdot(\tilde{\eta}_{-}\tilde{v}_{-})=0,   \\
		&\tilde{\eta}_{-}(\partial_{t}\tilde{v}_{-}+\tilde{v}_{-}\cdot\nabla\tilde{v}_{-})+\frac{1}{M^2}\nabla\tilde{\eta}_{-}= \tilde{\eta}_{-}\nabla\tilde{\phi}+\nu\Delta\tilde{v}_{-}+\lambda\nabla \mathrm{div} \tilde{v}_{-},   \\
		\label{elec NSP equ 3}
		&-\Delta\tilde{\phi}=4\pi(1  -  \tilde{\eta}_{-} ) .
	\end{align}
	As shown in \cite{Guo CMP 2011}, letting electron mass $m_{-} \longrightarrow 0$, we obtain the Boltzmann relation for electron 
	\begin{align}
		\tilde{\eta}_{-}=\eta_{0} e^{\frac{e}{T_{-}}M^2 \tilde{\phi}} . \nonumber
	\end{align}
	Then we deduce the simplified one-fluid ion Navier-Stokes-Poisson system
	\begin{align}
		\label{ion NSP equ 1}
		&\partial_{t}\tilde{\eta}_{+}+\nabla\cdot(\tilde{\eta}_{+}\tilde{v}_{+})=0,   \\
		&\tilde{\eta}_{+}(\partial_{t}\tilde{v}_{+}+\tilde{v}_{+}\cdot\nabla\tilde{v}_{+})+\frac{1}{M^2}\nabla\tilde{\eta}_{+}=-  \tilde{\eta}_{+}\nabla\tilde{\phi}+\nu\Delta\tilde{v}_{+}+\lambda\nabla \mathrm{div} \tilde{v}_{+},   \\
		\label{ion NSP equ 3}
		&-\Delta\tilde{\phi}=4\pi (\tilde{\eta}_{+}  -  e^{M^2 \tilde{\phi}} ) .
	\end{align}
	It's clear that 
	\begin{align}
		(\tilde{\eta}_s,\tilde{v}_s,\tilde{\phi}_s)=(1,(y,0)^T,0) \nonumber
	\end{align}
	is a stationary solution for (\ref{elec NSP equ 1})-(\ref{elec NSP equ 3}) and (\ref{ion NSP equ 1})-(\ref{ion NSP equ 3}) and this solution  is called Couette flow. 
	 
	It is natural and important to study the stability properties of Couette flows for Navier-Stokes-Poisson system in \textit{hydrodynamic stability theory}\cite{hydrodynamic stability}. Justification stability analysis to plasma equilibrium states for Vlasov-Poisson and Vlasov-Maxwell systems in one, two and three dimensions was already studied by Holm \et \cite{plasma equilibria}. To the best of our knowledge, there is no literature about the rigorous stability analysis of Couette flow for 2D compressible Navier-Stokes-Poisson system until now and this is a fundamental and interesting problem in plasma fluid dynamics. Thus, in our paper, we are concerned with the linear stability of Couette flow for 2D compressible Navier-Stokes-Poisson system, (\ref{elec NSP equ 1})-(\ref{elec NSP equ 3}) and (\ref{ion NSP equ 1})-(\ref{ion NSP equ 3}), in the domain $\mathbb{T}\times \mathbb{R}$.

	To begin, let us introduce some related mathematical results on the stability analysis of Couette flow in fluid dynamics. For an incompressible fluid, Rayleigh\cite{Rayleigh} and Kelvin\cite{Kelvin 1887} obtained the classical results on the linear stability analysis of Couette flow. Bedrossian and Masmoudi\cite{Bedrossian_Masmoudi nonlinear} rigorously confirmed the inviscid damping and  the asymptotic stability of planar Couette flow  for Euler equations in the domain $\mathbb{T} \times \mathbb{R}$ at the nonlinear level. This result has motivated a number of researches on the inviscid damping and stability analysis for linearized systems around other stationary solutions under different physical settings. For example, Wei \et \cite{Wei zhang zhao} gave the decay estimate of the velocity field and $H^1$ scattering for 2D linearized Euler system around a class of monotone shear flows in the finite channel $\mathbb{T} \times [0,1]$ and C. Zillinger\cite{Zillinger 2017} proved the stability, scattering and inviscid damping for 2D Euler systems around a large class of strictly monotone shear flows in periodic channel in Sobolev space. One can also refer to other nice results \cite{Grenier JFA}\cite{Hao Jia inviscid damping in Gevrey} \cite{Wei Zhang Zhao 2019 APDE} \cite{Zillinger 2016}. When considering viscous problem, some interesting results have been obtained in the study of transition threshold and enhanced dissipation, namely faster decay rate than heat equation. The great progress  was made by Bedrossian \et\cite{Bedrossian threshold 3D}\cite{Bedrossian AMS I}\cite{Bedrossian AMS II}, where they studied the stability threshold for 3D incompressible Navier-Stokes equations on $\mathbb{T} \times \mathbb{R} \times \mathbb{T}$ in Sobolev regularity. For $\mathbb{T} \times \mathbb{R}$, we can refer to \cite{Bedrossian 2D}. Later, Chen \et\cite{Chen Wei Zhang} studied this problem in $\mathbb{T} \times (-1,1)$ at high Reynolds number. The method they developed to establish resolvent estimates and space-time estimates can explain enhanced dissipation effect, inviscid damping effect and boundary layer effect. When considering nonlinear stability, due to the nonlinear interaction and resonance, which may exist and lead to the instability in nonlinear system, the analysis becomes challenging. For a general class of monotone shear flows, in the case of not closing to Couette flow, Ionescu A D and Jia\cite{Ionescu and Jia H} established the inviscid damping and nonlinear asymptotic stability for Euler system in a finite channel $\mathbb{T}\times [0,1]$.
	Masmoudi and Zhao\cite{Masmoudi and Zhao 2024 Annals} got the similar results from a different method. 	The transition threshold and nonlinear stability of Couette flow for 3D Navier-Stokes on $\mathbb{T} \times \mathbb{R} \times \mathbb{T}$ were studied by Wei and Zhang\cite{Wei_Zhang_3D_nonlinear}. For domain $\mathbb{T}\times [-1,1]\times\mathbb{T}$ at nonlinear level, one can refer to \cite{Chen Wei Zhang nonlinear channel}.
	
	The stability analysis of Couette flow for compressible fluid has been investigated since the 1940s\cite{Lin C C} in physics literature for nearly a century\cite{plasma equilibria}\cite{hydrodynamic stability}. 
	Glatzel \cite{Glatzel 1988} has studied the inviscid linear stability property of a simple  shear layer with supersonic velocity difference across the layer using normal mode method and also investigated the viscous linear stability of  Couette flow in the same method\cite{Glatzel}. 
	With temperature considered, Duck \et \cite{Duck} studied the linear stability property of plane Couette flow for realistic fluid models, where they did extensive analysis in the inviscid regime and showed that the details of the mean flow profile have a profound effect on the stability of the flow. 
	The linear stability of viscous Couette flow with a perfect compressible gas model was studied by Hu and Zhong \cite{Hu & Zhong} using high-order finite-difference global method and Chebyshev spectral collocation global method. 
	For rigorous mathematical analysis, we refer to the following literature.
	Kagei\cite{Kagei2011} showed that the plane Couette flow of compressible Navier-Stokes equations in infinite layer $\Omega=\mathbb{R}^{n-1} \times (0,l)$ is asymptotically stable, provided that the initial disturbances, the Reynolds number and Mach number are sufficiently small. 
	Under the same assumption, the global existence of strong solution to the compressible Navier-Stokes equations around parallel flows in $\mathbb{R}^{n-1} \times (0,l)$ was also proved by Kagei \cite{Kagei JDE 2011}.
	Moreover, by spectral analysis and energy method, the nonlinear stability property was proved in case of $n=2$ and the large time behavior is described by 1D viscous Burgers equation \cite{Kagei2012ARMA}.
	As the Navier-slip boundary is involved, the constraint on Reynolds and Mach numbers required in \cite{Kagei2011} to guarantee the stability of the plane Couette flow can be relaxed, proved by Li and Zhang\cite{Li and Zhang JDE}.
	The inviscid damping and enhanced dissipation phenomena were studied by Antonelli \et\cite{Antonelli 2021}\cite{Antonelli arxiv} for homogeneous Couette flow of 2D isentropic compressible Navier-Stokes equations in the domain $\mathbb{T} \times \mathbb{R}$. 
	Later on, for 3D isentropic compressible Navier-Stokes equations on $\mathbb{T} \times \mathbb{R} \times \mathbb{T}$, Zeng \et\cite{Zeng and Zhang and Zi} proved the enhanced dissipation phenomenon and lift-up phenomenon around Couette flow at linear level.
	For non-isentropic case, Zhai\cite{Zhai} showed the inviscid damping phenomenon of compressible Euler system on $\mathbb{T} \times \mathbb{R}$.
	In addition, Pu \et\cite{Pu and Zhou and Bian} studied the linear stability property for Couette flow of the Euler-Poisson system for both ion fluid and electron fluid on $\mathbb{T} \times \mathbb{R}$.
	After we initiated this work, we noticed the work of Huang \et\cite{Huang Feimin nonlinear Couette}, who studied the nonlinear stability of Couette flow  for 2D compressible Navier-Stokes equations in Sobolev space at high Reynolds number in $\mathbb{T} \times \mathbb{R}$ and showed the enhanced dissipation phenomenon and stability threshold for compressible Couette flow.
	
	Despite the long-standing research on the stability analysis of compressible flow, with considerable work established, the rigorous mathematical result on the compressible Couette flow is less developed compared to the incompressible case. Specifically, the linear stability analysis of Couette flow for compressible Navier-Stokes-Poisson system is unclear until now, which will be investigated in this paper.

	\subsection{Ion case} 
	\ 
	
	In order to study the linear stability properties of (\ref{ion NSP equ 1})-(\ref{ion NSP equ 3}), we consider a perturbation around Couette flow, i.e., let
	\begin{align}
		\tilde{\eta}_{+}=\eta_{+}+\tilde{\eta}_s, \quad  \tilde{v}_{+}=u_{+}+\tilde{v}_s , \quad \tilde{\phi}=\phi+\tilde{\phi}_s. \nonumber
	\end{align}
	The linearized system around the Couette flow reads as follows
	\begin{align}
		\label{linearized equ 1}
		&\partial_{t}\eta_{+} +y\partial_x\eta_{+}+\nabla\cdot u_{+}=0,\\
		&\partial_{t}u_{+}+y\partial_xu_{+}+
		\begin{pmatrix}
			u_{+}^y\\
			0
		\end{pmatrix}
		+\frac{1}{M^2}\nabla\eta_{+}=-\nabla\phi+\nu\Delta u_{+} +\lambda\nabla(\nabla\cdot u_{+}),\\
		\label{linearized equ 3}
		&\Delta \phi=4\pi(M^2 \phi-\eta_{+}).
	\end{align}
	
	Let $\psi_{+}=\nabla\cdot u_{+}$, $\omega_{+}=\nabla^\perp\cdot u_{+}$, where $\nabla^\perp=(-\partial_y,\partial_x)^T$. We use Helmholtz projection $\mathbb{P}$ to decompose the velocity field $u_{+}$ :
	$$u_{+}=(u^x_{+},u^y_{+})^T=\nabla\Delta^{-1}\psi_{+}+\nabla^\perp\Delta^{-1}\omega_{+}=:\mathbb{Q}[u_{+}]+\mathbb{P}[u_{+}].$$
	Then the linearized system \eqref{linearized equ 1}-\eqref{linearized equ 3} is transformed into the form with respect to $(\eta_{+},\psi_{+},\omega_{+})$
	\begin{align}
		\label{linear sys eta psi om 1}
		&\partial_{t}\eta_{+}+y\partial_x\eta_{+}+\psi_{+}=0,\\
		\label{linear sys eta psi om 2}
		&\partial_{t}\psi_{+}+y\partial_x\psi_{+}+2\partial_xu^y_{+}+\frac{1}{M^2}\Delta\eta_{+}=-4\pi \Delta (-\Delta +4 \pi M^2)^{-1} \eta_{+}+(\nu+\lambda)\Delta \psi_{+},\\
		\label{linear sys eta psi om 3}
		&\partial_{t}\omega_{+}+y\partial_x\omega_{+}-\psi_{+}=\nu\Delta\omega_{+}.
	\end{align}

	\subsection{Electron case}
	\  
	
	As shown above, we consider a perturbation around Couette flow for (\ref{elec NSP equ 1})-(\ref{elec NSP equ 3}). Let
	\begin{align}
		\tilde{\eta}_{-}=\eta_{-}+\tilde{\eta}_s, \quad  \tilde{v}_{-}=u_{-}+\tilde{v}_s , \quad \tilde{\phi}=\phi+\tilde{\phi}_s. \nonumber
	\end{align}
	Then the linearized system around Couette flow for  (\ref{elec NSP equ 1})-(\ref{elec NSP equ 3}) can be written as
	\begin{align}
		\label{linearized elec equ 1}
		&\partial_{t}\eta_{-} +y\partial_x\eta_{-}+\nabla\cdot u_{-}=0,\\
		&\partial_{t}u_{-}+y\partial_xu_{-}+
		\begin{pmatrix}
			u_{-}^y\\
			0
		\end{pmatrix}
		+\frac{1}{M^2}\nabla\eta_{-}=\nabla\phi+\nu\Delta u_{-} +\lambda\nabla(\nabla\cdot u_{-}),\\
		\label{linearized elec equ 3}
		&\Delta \phi=4\pi \eta_{-}.
	\end{align}
	
	Let $\psi_{-}=\nabla\cdot u_{-}$, $\omega_{-}=\nabla^\perp\cdot u_{-}$. We also use Helmholtz projection $\mathbb{P}$ to decompose the velocity field $u_{-}$ :
	$$u_{-}=(u^x_{-},u^y_{-})^T=\nabla\Delta^{-1}\psi_{-}+\nabla^\perp\Delta^{-1}\omega_{-}=:\mathbb{Q}[u_{-}]+\mathbb{P}[u_{-}].$$
	Then the linearized system (\ref{linearized elec equ 1})-(\ref{linearized elec equ 3}) can be written in the form with respect to $(\eta_{-},\psi_{-},\omega_{-})$ 
	\begin{align}
		\label{linear elec sys eta psi om 1}
		&\partial_{t}\eta_{-}+y\partial_x\eta_{-}+\psi_{-}=0,\\
		\label{linear elec sys eta psi om 2}
		&\partial_{t}\psi_{-}+y\partial_x\psi_{-}+2\partial_x u^y_{-}+\frac{1}{M^2}\Delta\eta_{-}=4\pi \eta_{-}+(\nu+\lambda)\Delta \psi_{-},\\
		\label{linear elec sys eta psi om 3}
		&\partial_{t}\omega_{-}+y\partial_x\omega_{-}-\psi_{-}=\nu\Delta\omega_{-}.
	\end{align}
	
	\subsection{Main results}\ 
	
	Throughout this paper, we denote the average of a function in the $x$ direction by
	\begin{align}
		f_0(y)=\frac{1}{2\pi}\int_{\mathbb{T}}f(x,y)dx, \nonumber
	\end{align}
	and this is also the zero mode in $x$ direction of function $f$. 
	When using symbol $\lesssim \:$, we are neglecting constants which do not depend on $\nu$, but may depend on $(1+M)^\beta$ or $\mathrm{exp}(M^\beta)$, for some $\beta\geq 1$. However, we keep the constants, which go to zero as $M\to +\infty$, like $\frac{1}{M}$.
	We simply write $C_{in}^{\pm}=C_{in}^{\pm}(\eta^{in}_{\pm},\psi^{in}_\pm,\omega^{in}_{\pm})$ to indicate a suitable combination of Sobolev norms of the initial data. In particular, we use $C^{\pm}_{in,s}$ to denote the following combination of Sobolev norms of the initial data, i.e.
	\begin{align}
		C^{\pm}_{in,s}=\frac{1}{M}\norm{\Pi_{\pm}^{in}}_{H^{s+1}}+\norm{\Psi_{\pm}^{in}}_{H^s}+\norm{F_{\pm}^{in}-\nu M^2 \Psi_{\pm}^{in}}_{H^s}.
	\end{align}
	The definition of $\Pi_{\pm}, \Psi_{\pm}, F_{\pm}$ will be clarified later.
	
	The main results for the ion($+$) and electron($-$) NSP system are as follows.
	\begin{theorem}\label{th1}
		Suppose $\nu, \lambda\geq 0$ and $M>0$ satisfy $\nu+\lambda\leq1/2$, $M\leq \mathrm{min}\{(\nu+\lambda)^{-1},\lambda^{-\frac{1}{2}},\nu^{-\frac{1}{3}}\}$
		 and $M^2\leq \mathrm{min} \{ \frac{1}{\nu+\lambda}, \frac{\nu+\lambda}{16\nu }, \xkh{\frac{\nu+\lambda}{32 \pi \nu } }^\frac{1}{2} \}$. Assume that $\eta_{\pm}^{in}\in H^6(\mathbb{T}\times\mathbb{R})$，and $\psi_{\pm}^{in}, \omega_{\pm}^{in}\in H^5(\mathbb{T}\times\mathbb{R})$. Then 
		\begin{align}
			\label{conclusion1 in th 1}
			&\norm{\xkh{\mathbb{Q}[u_{\pm}]-\mathbb{Q}[u_{\pm}]_0}(t)}_{L^2}+\frac{1}{M}\norm{\eta_{\pm}}_{L^2}\lesssim \lan{t}^\frac{1}{2}\mathrm{e}^{-\frac{1}{32}\nu^\frac{1}{3}t}C_{in}^{\pm} ,\\
			\label{conclusion2 in th 1}
			&\norm{\xkh{\mathbb{P}[u_{\pm}]^x-\mathbb{P}[u_{\pm}]^x_0}(t)}_{L^2}\lesssim M \frac{\mathrm{e}^{-\frac{1}{64}\nu^\frac{1}{3}t}}{\lan{t}^\frac{1}{2}}C_{in}^{\pm}+\frac{\mathrm{e}^{-\frac{1}{12}\nu^\frac{1}{3}t}}{\lan{t}} \norm{\omega^{in}_{\pm}+\eta^{in}_{\pm}}_{H^1} ,\\
			\label{conclusion 3 in Th1}
			&\norm{\mathbb{P}[u_{\pm}]^y(t)}_{L^2}\lesssim M \frac{\mathrm{e}^{-\frac{1}{64}\nu^\frac{1}{3}t}}{\lan{t}^\frac{3}{2}}C_{in}^{\pm}+\frac{\mathrm{e}^{-\frac{1}{12}\nu^\frac{1}{3}t}}{\lan{t}^2} \norm{\omega^{in}_{\pm}+\eta^{in}_{\pm}}_{H^2} .
		\end{align}
	\end{theorem}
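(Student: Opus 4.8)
The plan is to reduce \eqref{linear sys eta psi om 1}--\eqref{linear sys eta psi om 3} (and its electron analogue \eqref{linear elec sys eta psi om 1}--\eqref{linear elec sys eta psi om 3}) to a family of non-autonomous ODEs by taking the Fourier transform in $x$ and then passing to coordinates moving with the Couette flow. Writing $\hat f(k,\xi,t)$ for the full Fourier transform, the transport operator $\partial_t+y\partial_x$ becomes $\partial_t-k\partial_\xi$, whose characteristics are $\xi+kt=\mathrm{const}$; so I would introduce profile variables $\Pi_\pm,\Psi_\pm,F_\pm$ (the moving-frame versions of $\eta_\pm$, $\psi_\pm$, and the combined quantity $\omega_\pm+\eta_\pm$, up to a viscous correction) by freezing the label $\xi$ along these characteristics. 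In these variables the Laplacian acquires the time-dependent symbol $p(t)=k^2+(\xi-kt)^2$, comparable to $k^2$ near the resonant time $t_\ast=\xi/k$ and growing quadratically away from it, so that $(\nu+\lambda)\Delta\psi$ and $\nu\Delta\omega$ turn into $-(\nu+\lambda)p(t)\Psi_\pm$ and $-\nu p(t)(\cdots)$. The rate $\nu^{1/3}$ in the conclusions is exactly the outcome of balancing $\int_0^t\nu p\,ds\sim\nu k^2 t^3$ against the window of width $\sim(\nu k^2)^{-1/3}$ spent near $t_\ast$, giving an effective decay rate $\gtrsim\nu^{1/3}$ for every $k\neq0$. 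The zero mode $k=0$ carries no transport and is subtracted off throughout (the terms $\mathbb{Q}[u_\pm]_0$, $\mathbb{P}[u_\pm]^x_0$ in \eqref{conclusion1 in th 1}--\eqref{conclusion2 in th 1}), so I focus on $k\neq0$.

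The system splits into a clean transported part and a coupled acoustic part. Adding \eqref{linear sys eta psi om 1} and \eqref{linear sys eta psi om 3}, the $\pm\psi_\pm$ forcing cancels and $q_\pm:=\omega_\pm+\eta_\pm$ solves $\partial_t q_\pm+y\partial_x q_\pm-\nu\Delta q_\pm=-\nu\Delta\eta_\pm$, i.e.\ a transport--diffusion equation coupled to the rest of the system only through the small viscous forcing $\nu\Delta\eta_\pm$; this is why $\norm{\omega^{in}_\pm+\eta^{in}_\pm}$ governs the second term in \eqref{conclusion2 in th 1}--\eqref{conclusion 3 in Th1}. I would estimate its profile $F_\pm$ by a standard enhanced-dissipation energy argument (a weighted $L^2$ estimate with a ghost multiplier adapted to $p(t)$), obtaining decay $\mathrm{e}^{-\frac{1}{12}\nu^{1/3}t}$, and then recover the solenoidal velocity through the Biot--Savart relations $\mathbb{P}[u_\pm]^x=-\partial_y\Delta^{-1}\omega_\pm$, $\mathbb{P}[u_\pm]^y=\partial_x\Delta^{-1}\omega_\pm$, whose moving-frame symbols $\frac{\abs{\xi-kt}}{p(t)}$ and $\frac{\abs{k}}{p(t)}$ supply the inviscid-damping factors $\lan{t}^{-1}$ and $\lan{t}^{-2}$.

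The core is the acoustic--density subsystem for $(\Pi_\pm,\Psi_\pm)$. Here $\partial_t\eta_\pm=-\psi_\pm$ together with $\partial_t\psi_\pm=\frac{p}{M^2}\eta_\pm+\cdots$ is an oscillator of frequency $\sim\sqrt{p(t)}/M$, which I would symmetrize by rescaling the density so that $\frac1M\norm{\eta_\pm}$ and $\norm{\mathbb{Q}[u_\pm]}$ sit at the same level in the energy (consistent with \eqref{conclusion1 in th 1} and the $H^{s+1}$ versus $H^s$ weighting in $C^\pm_{in,s}$). Two structural terms must be tracked: the contribution of $2\partial_x u^y_\pm$ produces a logarithmic-derivative term $\propto\frac{\dot p}{p}\Psi_\pm$ (absorbed by a $\sqrt{p}$-type weight) together with a lift-up coupling $\frac{2k^2}{p}\omega_\pm$ feeding the clean part into the acoustic energy; and the Poisson term, which for ions is $-4\pi\Delta(-\Delta+4\pi M^2)^{-1}\eta_+$ (a sign-favourable Debye-shielding correction of the acoustic frequency) and for electrons is $+4\pi\eta_-$ (a bounded, potentially destabilizing zeroth-order term). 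I would build a time-dependent energy functional $E_\pm(t)$ combining $\frac{1}{M^2}\norm{\Pi_\pm}^2$, $\norm{\Psi_\pm}^2$ and a suitable cross term, show it obeys a differential inequality of the form $\dot E_\pm\lesssim-(\text{dissipation})+\frac{C}{\lan{t}}E_\pm+(\text{forcing by }F_\pm)$, and integrate it to obtain the polynomial-in-$t$ growth responsible for the transient $\lan{t}^{1/2}$ factor in \eqref{conclusion1 in th 1}, after which the gain $\mathrm{e}^{-\frac{1}{32}\nu^{1/3}t}$ dominates.

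The main obstacle will be precisely this last step: designing the weight so that the dissipation simultaneously controls the destabilizing Poisson term (for electrons), the $\frac{\dot p}{p}$ and lift-up couplings near the resonance $t_\ast$, and the $M$-dependence of every constant, in exactly the regime fixed by $\nu+\lambda\le1/2$, $M\le\min\{(\nu+\lambda)^{-1},\lambda^{-1/2},\nu^{-1/3}\}$ and $M^2\le\min\{\frac{1}{\nu+\lambda},\frac{\nu+\lambda}{16\nu},(\frac{\nu+\lambda}{32\pi\nu})^{1/2}\}$; these inequalities are what guarantee that the enhanced-dissipation gain beats the acoustic and Poisson growth. Finally I would pass back from profiles to physical variables, split $\omega_\pm=(\omega_\pm+\eta_\pm)-\eta_\pm$ to separate the clean inviscid-damping contribution (giving the $\lan{t}^{-1}$, $\lan{t}^{-2}$ terms) from the slower acoustic one (giving the $M\lan{t}^{-1/2}$, $M\lan{t}^{-3/2}$ terms, after inserting the bound $\frac1M\norm{\eta_\pm}\lesssim\lan{t}^{1/2}\mathrm{e}^{-\frac{1}{32}\nu^{1/3}t}C^\pm_{in}$ from \eqref{conclusion1 in th 1} into the Biot--Savart symbols), and sum over $k\in\mathbb{Z}$ by Parseval, using the $H^6$/$H^5$ regularity of the data to absorb the frequency weights generated by the inviscid-damping symbols.
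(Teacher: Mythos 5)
Your overall architecture (zero-mode subtraction, moving frame $X=x-yt$, Fourier symbol $p(t)=\alpha$, weighted energy for the acoustic pair, separate treatment of $\omega_\pm+\eta_\pm$, Biot--Savart symbols for the $\lan{t}^{-1}$, $\lan{t}^{-2}$ factors) matches the paper's, but two essential ideas are missing and the argument does not close as proposed. First, your scheme for the ``clean'' quantity $q_\pm=\omega_\pm+\eta_\pm$ is circular: its forcing $-\nu\Delta\eta_\pm$ is \emph{not} small --- in the moving frame it is $\nu\alpha\widehat{\Pi}_\pm$ with $\alpha\sim k^2t^2$ --- so estimating $F_\pm$ first ``by a standard enhanced-dissipation argument'' requires the decay of $\widehat{\Pi}_\pm$, which in turn requires control of the lift-up coupling $\frac{2k^2}{\alpha}\widehat{\Gamma}_\pm$ in the $\Psi$-equation, i.e.\ control of $F_\pm$. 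The paper breaks this circle by replacing $F_\pm$ with the modified variable $F_\pm-\nu M^2\Psi_\pm$, whose equation has the $\nu\alpha\widehat{\Pi}$ forcing cancelled (via $\partial_t\widehat\Psi=\frac{1}{M^2}\alpha\widehat\Pi+\cdots$), and by placing $Z_3=\alpha^{-3/4}(\widehat F-\nu M^2\widehat\Psi)$ inside a \emph{single} coercive energy functional together with $Z_1,Z_2$; only after that joint estimate is closed does a Duhamel argument recover $\widehat F$ and hence $\widehat\Gamma$. Without this substitution the residual term $\nu(\mu-\nu)M^2\alpha\widehat\Psi$ cannot be absorbed unless the weight on $Z_3$ is at least $\alpha^{-3/4}$, and the whole Gr\"onwall scheme fails.

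Second, you correctly flag the Poisson coupling as ``the main obstacle'' but do not resolve it, and this is precisely the new difficulty relative to the compressible Navier--Stokes case: the time derivative of $|Z_2|^2$ produces the term $M\alpha^{1/2}\frac{4\pi}{\alpha+4\pi M^2\delta}\Re(Z_1\bar Z_2)$, which carries no viscosity and is not integrable in time, so it can be absorbed neither by the dissipative terms nor by Gr\"onwall. The paper's fix is to add $M^2\frac{4\pi}{\alpha+4\pi M^2\delta}|Z_1|^2$ to the energy functional so that its time derivative generates the same term with opposite sign, while preserving coercivity; some mechanism of this kind must appear in any correct proof. A smaller but real point: the $H^6/H^5$ regularity is not consumed by the Biot--Savart symbols (those cost only $H^1$, $H^2$); the derivative loss comes from cutting off the non-integrable multiplier $\frac{\partial_t\alpha}{\alpha}$ in the $Z_3$-estimate (producing $\lan{k,\xi}^3$) and from converting $\alpha(\tau)\lesssim\lan{\tau}^2\lan{k,\xi}^2$ in the Duhamel step for $\widehat F$, which your sketch does not account for.
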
 
	\begin{remark}
		From the definition of $\mathbb{P}$, we get $\mathbb{P}[u_\pm]^y_0(t)=0$. It follows from (\ref{conclusion1 in th 1}) that $\eta$ and the irrotational component of the nonzero modes of velocity field may have a transient growth of $\nu^{-\frac{1}{6}}$ on a time scale of $\nu^{-\frac{1}{3}}$, after which they decay exponentially. From (\ref{conclusion2 in th 1}) and (\ref{conclusion 3 in Th1}), we observed that for the solenoidal component of the nonzero modes of velocity field, the decay of $y$ direction is better than $x$ direction. One should observe that the results in Theorem \ref{th1} are not closed, with loss of derivatives. The condition $M^2\leq \mathrm{min} \{ \frac{1}{\nu+\lambda}, \frac{\nu+\lambda}{16\nu }, \xkh{\frac{\nu+\lambda}{32 \pi \nu } }^\frac{1}{2} \}$ is needed, since the analysis of $k\neq 0$ modes is based on Theorem \ref{theorem k=0 ion} and Theorem \ref{theorem electron k=0}, see Remark \ref{remark of ion 0 mode initial value} and Remark \ref{remark of electron 0 mode initial value}.
	\end{remark}
	\begin{theorem} \label{th 2}
		Suppose $\nu> 0$, $\lambda\geq 0$ and $M>0$ satisfy $\nu+\lambda\leq \frac{1}{2}$ , $M\leq \mathrm{min}\{(\nu+\lambda)^{-1},\lambda^{-\frac{1}{2}},\nu^{-\frac{1}{3}} \}$
		and  $M^2\leq \mathrm{min} \{ \frac{1}{\nu+\lambda}, \frac{\nu+\lambda}{16\nu }, \xkh{\frac{\nu+\lambda}{32 \pi \nu } }^\frac{1}{2} \}$. Assume that $\eta^{in}_{\pm}\in H^1(\mathbb{T}\times\mathbb{R})$, and $\psi^{in}_{\pm},\omega^{in}_{\pm}\in L^2(\mathbb{T}\times\mathbb{R})$. Then
		\begin{align}
			&\norm{\xkh{\psi_{\pm}-\psi_{\pm,0}}(t)}_{L^2}+\frac{1}{M}\norm{\nabla\xkh{\eta_{\pm}-\eta_{\pm,0}}(t)}_{L^2}+\norm{\xkh{\omega_{\pm}-\omega_{\pm,0}}(t)}_{L^2} \nonumber \\
			&\lesssim\nu^{-\frac{1}{2}}\mathrm{e}^{-\frac{\nu^\frac{1}{3}}{32}t} \xkh{\norm{\xkh{\psi^{in}_{\pm}-\psi^{in}_{\pm,0}}}_{L^2}+\frac{1}{M}\norm{\nabla\xkh{\eta^{in}_{\pm}-\eta^{in}_{\pm,0}}}_{L^2}+\norm{\xkh{\omega^{in}_{\pm}-\omega^{in}_{\pm,0}}}_{L^2}}.
		\end{align}
		In addition, the following inequality holds
		\begin{align}
			&\norm{\xkh{u_{\pm}-u_{\pm,0}}(t)}_{L^2}+\frac{1}{M}\norm{\xkh{\eta_{\pm}-\eta_{\pm,0}}(t)}_{L^2}\nonumber \\
			&\lesssim \nu^{-\frac{1}{6}} \mathrm{e}^{-\frac{\nu^\frac{1}{3}}{32}t} \xkh{\norm{\xkh{\psi^{in}_{\pm}-\psi^{in}_{\pm,0}}}_{L^2}+\frac{1}{M}\norm{\nabla\xkh{\eta^{in}_{\pm}-\eta^{in}_{\pm,0}}}_{L^2}+\norm{\xkh{\omega^{in}_{\pm}-\omega^{in}_{\pm,0}}}_{L^2}}.
		\end{align}
	\end{theorem}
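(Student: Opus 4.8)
The plan is to reduce everything to the nonzero $x$-modes and to prove a mode-by-mode enhanced-dissipation estimate. First I would Fourier transform in $x$: since the coefficients of \eqref{linear sys eta psi om 1}--\eqref{linear sys eta psi om 3} and \eqref{linear elec sys eta psi om 1}--\eqref{linear elec sys eta psi om 3} depend only on $y$, the equations decouple across the frequency $k$, and the projection onto $k\neq0$ is exactly the subtraction of the zero mode $f_{\pm,0}$. For each fixed $k\neq0$ I would pass to the moving frame by writing $\hat\eta_{\pm,k}=e^{-ikyt}\Pi_\pm$ and likewise for $\Psi_\pm,F_\pm$; this is an $L^2$ isometry in $y$, it removes the transport term $iky$, and it turns $\Delta$ into the time-dependent operator with Fourier symbol $-(k^2+(\eta-kt)^2)=:-p(t)$, where $\eta$ is the $y$-frequency. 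The viscous terms $(\nu+\lambda)\Delta\psi_\pm$ and $\nu\Delta\omega_\pm$ then become damping at rate $\sim\nu\,p(t)$, whose time integral grows cubically; optimizing over the critical time $t_\ast=\eta/k$ (where $p$ degenerates to $k^2$) is the source of the $\nu^{1/3}$ rate.

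Next I would isolate the oscillatory structure created by compressibility and the Poisson coupling. The pair $(\Pi_\pm,\Psi_\pm)$ is driven by the pressure term $\tfrac1{M^2}\Delta\eta_\pm$ together with the $\psi_\pm$ in the density equation; combined with the Poisson contribution ($-4\pi\Delta(-\Delta+4\pi M^2)^{-1}\eta_+$ for ions, $+4\pi\eta_-$ for electrons) this yields the closed oscillator $\partial_{tt}\Pi_\pm=-\Omega^2\Pi_\pm$ with $\Omega^2=\tfrac{|\nabla|^2}{M^2}+O(1)>0$, i.e. stable acoustic/Langmuir oscillations of frequency $\sim|\nabla|/M$ conserving $\tfrac1{M^2}\norm{|\nabla|\Pi_\pm}_{L^2}^2+\norm{\Psi_\pm}_{L^2}^2$. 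The smallness of $M^2$ guarantees that the Poisson term is a bounded perturbation of this positive energy and cannot destroy its coercivity; this is also where the electron sign $+4\pi\eta_-$ is seen to be harmless. The vorticity $F_\pm$ couples to the system only through the lift-up term $-\psi_\pm$ and, conversely, re-enters the $\Psi_\pm$ equation through $2\partial_x u_\pm^y$, while being otherwise transported and dissipated.

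The heart of the argument, and the step I expect to be the main obstacle, is the construction of a Lyapunov functional $E_k(t)$ for the profile system. I would take $E_k$ to be a weighted combination of $\norm{\Psi_\pm}_{L^2}^2$, $M^{-2}\norm{|\nabla|\Pi_\pm}_{L^2}^2$ and $\norm{F_\pm}_{L^2}^2$ with time-dependent weights tuned to $p(t)$, plus small cross terms of the type $\Re(\Pi_\pm\overline{\Psi_\pm})$ and $\Re(\Psi_\pm\overline{F_\pm})$ designed to extract coercivity from the acoustic coupling and from the lift-up coupling. The target is a differential inequality $\tfrac{d}{dt}E_k\le -c\,\nu^{1/3}E_k$, so that Grönwall gives $E_k(t)\le e^{-c\nu^{1/3}t}E_k(0)$ with $c$ uniform in $k,\nu,M$ under the stated hypotheses. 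The delicate point is reconciling the fast oscillation (time scale $M/|\nabla|$) with the slow enhanced dissipation (time scale $\nu^{-1/3}$): the cross terms must be large enough to gain sign-definiteness from the off-diagonal terms yet small enough not to spoil the main dissipation, and the weights must be chosen so that the commutator generated by $\dot p(t)$ is dominated by $\nu^{1/3}E_k$. This is exactly where the three smallness conditions on $M^2$ are consumed.

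Finally I would pass from the weighted bound back to physical norms. The functional $E_k$ is equivalent to $\norm{\Psi_\pm}_{L^2}^2+M^{-2}\norm{|\nabla|\Pi_\pm}_{L^2}^2+\norm{F_\pm}_{L^2}^2$ only up to constants that depend on $\nu$ through the degenerate symbol $p$, and the ratio of these upper and lower constants produces the transient factor $\nu^{-1/2}$ in the first inequality. For the velocity I would use $u_\pm=\nabla\Delta^{-1}\psi_\pm+\nabla^\perp\Delta^{-1}\omega_\pm=|\nabla|^{-1}(\cdots)$; since $|\nabla|\ge|k|\ge1$ on the nonzero modes, this Biot--Savart reweighting improves the equivalence constants and drops the transient factor from $\nu^{-1/2}$ to $\nu^{-1/6}$, giving the second inequality for $\norm{u_\pm-u_{\pm,0}}+\tfrac1M\norm{\eta_\pm-\eta_{\pm,0}}$. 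Summing over $k\neq0$ by Plancherel and undoing the (isometric) profile change of variables then yields the stated estimates; the same functional serves both the ion and electron systems, the only difference being the form of the Poisson term, which under $M^2\lesssim1$ is absorbed identically.
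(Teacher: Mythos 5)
Your scaffolding (restriction to $k\neq0$, moving frame, symmetrized acoustic variables $\tfrac{1}{M}\alpha^{1/2}\widehat\Pi$ and $\widehat\Psi$, a weighted Lyapunov functional with cross terms, Gr\"onwall at rate $\nu^{1/3}$, and recovery of the factors $\nu^{-1/2}$ and $\nu^{-1/6}$ from the degradation of the time-dependent weights) matches the paper's strategy, but the plan as written has a gap that would make the energy estimate fail. You place the vorticity itself (your $\norm{F_{\pm}}_{L^2}^2$) in the functional and propose to tame the lift-up coupling with a cross term of the type $\Re(\Psi_{\pm}\overline{F_{\pm}})$. In the moving frame the vorticity satisfies $\partial_{t}\widehat{\Gamma}=\widehat{\Psi}-\nu\alpha\widehat{\Gamma}$, so $\tfrac{d}{dt}\tfrac12\vert\widehat{\Gamma}\vert^2$ contains the $O(1)$ term $\Re(\widehat{\Psi}\overline{\widehat{\Gamma}})$, which cannot be absorbed by a dissipation of size $\nu^{1/3}$ (here $\nu\alpha$ can be as small as $\nu k^2$); and a cross term large enough to cancel it reintroduces, through $\partial_{t}\widehat{\Psi}$, the term $\tfrac{1}{M^2}\alpha\,\Re(\widehat{\Pi}\overline{\widehat{\Gamma}})$, which is even worse. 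The paper's essential device, absent from your proposal, is the good unknown $\eta+\omega-\nu M^2\psi$ (in the moving frame $F-\nu M^2\Psi$ with $F=\Pi+\Gamma$): the sum $\Pi+\Gamma$ cancels the lift-up exactly, and the correction $-\nu M^2\Psi$ cancels the resulting forcing $\nu\alpha\widehat{\Pi}$, so every coupling in its evolution equation carries a factor of $\nu$ and is absorbable. The vorticity estimate is then recovered a posteriori by the triangle inequality, which is where the $\nu^{-1/2}$ in the $\omega$-bound actually comes from.

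A second, less fatal, underestimation concerns the Poisson coupling. You assert it is a bounded perturbation that is ``absorbed identically'' once $M^2\lesssim1$. In fact it generates in $\tfrac{d}{dt}\vert Z_2\vert^2$ a cross term with multiplier $M\alpha^{1/2}\tfrac{4\pi}{\alpha+4\pi M^2\delta}\sim 4\pi M\alpha^{-1/2}$, which carries no viscosity and is not integrable in time (most visibly in the electron case $\delta=0$), so it can neither be absorbed by the dissipative terms nor swept into the Gr\"onwall factor. The paper must add the term $\tfrac{4\pi M^2}{\alpha+4\pi M^2\delta}\vert Z_1\vert^2$ to the energy so that its time derivative produces the opposite-sign partner of this bad term. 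Together with the two explicit multipliers you leave unconstructed --- $m$, with the property $\nu\alpha+\partial_{t}m/m\geq\nu^{1/3}$, and $w$, a cut-off of $\alpha$ near the critical time with total growth $\nu^{-2/3}$ (whence $w^{3/4}\lesssim\nu^{-1/2}$ and $w^{3/4}\alpha^{-1/2}\lesssim\nu^{-1/6}$) --- these are the ingredients without which the inequality $\tfrac{d}{dt}E_k\leq-c\,\nu^{1/3}E_k$ cannot be closed.
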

	\begin{remark}
		Compared to results in Theorem \ref{th1}, the decay estimates in Theorem \ref{th 2} are worse, but there is no loss of derivatives in Theorem \ref{th 2}. In a sence, we are trading off time decay for regularity.  
	\end{remark}

	\: \\
	\noindent \textbf{Main difficulties and Strategies}

	There are three main difficulties in our research. Firstly, with the presence of viscosity, it follows from summing up (\ref{linear sys eta psi om 1}) and (\ref{linear sys eta psi om 3}) or summing up (\ref{linear elec sys eta psi om 1}) and (\ref{linear elec sys eta psi om 3}), that the conservation of $\eta+\omega$ is no longer valid for both ion fluid and electron fluid, which played an significant role in the stability analysis of Couette flow for Euler-Poisson equations, by reducing the problem to the analysis of a 2D dynamical system, see\cite{Pu and Zhou and Bian}. Secondly, there are dissipation terms for $\psi$ and $\omega$, 
	see (\ref{linear sys eta psi om 2})(\ref{linear sys eta psi om 3})(\ref{linear elec sys eta psi om 2})(\ref{linear elec sys eta psi om 3}). 
	However, there is no dissipation term in (\ref{linear sys eta psi om 1}) or (\ref{linear elec sys eta psi om 1}), resulting that the priori decay estimate for $\eta$ is nontrivial.  
	 The third difficulty arises from the existence of the self-consistent electric field, which is determined by the density and an elliptic equation and makes the system nonlocal, in contrast to Euler or Navier-Stokes system. 
	 
	 Inspired by the work\cite{Antonelli 2021}, we will replace the non-conserved quantity $\eta+\omega$ by quantity $\eta+\omega-\nu M^2 \psi$, which satisfies a more complicated equation with a good dissipation structure. If we could establish decay estimates on ($\eta$, $\psi$, $\eta+\omega-\nu M^2 \psi$), then by triangle inequality, the decay estimates on ($\eta$, $\psi$, $\omega$) would be obtained. In the subsequent analysis, we decouple the zero mode in $x$ direction from the non-zero modes. 
	 
	 For the zero mode in $x$ direction, we can define suitable coercive energy functionals in terms of ($\eta$, $\psi$, $\eta+\omega-\nu M^2 \psi$), and the decay estimates follow from the Gr\"onwall's type inequalities on such energy functionals. The functionals in our paper are more complicated than \cite{Antonelli 2021}'s, as in our paper the energy functionals must account for the effect of the self-consistent electric field, see Remark \ref{remark on k=0 ion energy} and Remark \ref{remark on k=0 electron energy}.
	 
	 For the other modes, as is standard, we firstly make following change of coordinates to remove the transport terms
	 \begin{align}
	 	X=x-yt, \quad Y=y . \nonumber
	 \end{align}
 	We proceed to take Fourier transformation and shift the analysis from physical space $(x,y)$ to Fourier space $(k,\xi)$. Then, we define a coercive weighted energy functional in terms of ($\eta$, $\psi$, $\eta+\omega-\nu M^2 \psi$), where the weights are time-dependent Fourier multipliers. Firstly, the weights for $\eta$ and $\psi$ are  determined by the symmetrization procedure of inviscid system\cite{Pu and Zhou and Bian}, the weight for $\eta+\omega-\nu M^2 \psi$ is determined by the structure of its equation, and we establish the decay estimates with loss of derivatives using Gr\"onwall's inequality, see Theorem \ref{th1}. Secondly, a thorough analysis reveals that the loss of derivatives comes from the multipliers. We next improve the multipliers. Eventually, we obtain the decay estimates without loss of derivatives utilizing Gr\"onwall's inequality, see Theorem \ref{th 2}. This method is inspired by the work \cite{Antonelli 2021}. However, the weighted energy functional in \cite{Antonelli 2021} does not work for NSP system, due to the effect of the self-consistent electric field.
 	 Precisely, the time derivative of such energy functional generates a term with the coefficient  $M\alpha^\frac{1}{2}\dfrac{4\pi}{\alpha+4\pi M^2 \delta} $. This is a bad term for the following two reasons. Firstly, the lack of viscosity in such term results in the loss of smallness, preventing the absorption from some dissipative terms.
 	 Secondly, the coefficient of such term is not integrable in time, preventing the use of Gr\"onwall's inequality.
	Then we introduce another term to modify the weighted energy functional, whose time derivative generates a term, being opposite sign to the above bad term. In addition, the modified weighted energy functional retains the coercive property.

	\: \\
	\noindent \textbf{Structure of the paper} 
	
	 It is clear that there is only one term distinguishing the structure of the ion fluid model (\ref{linear sys eta psi om 1})-(\ref{linear sys eta psi om 3}) and the structure of the electron fluid model (\ref{linear elec sys eta psi om 1})-(\ref{linear elec sys eta psi om 3}).
	 
	 In section $2$, we will study the dynamics of $k=0$ mode for ion fluid and electron fluid separately. The first reason is that the difference between energy functional for ion case and electron case is 
	 considerable and we will estimate some term appearing in their time derivatives by different method. The second reason is that if we analyze them in a uniform energy functional, 
	 then the terms like $4 \pi \norm{\partial_y^{l} (-\partial_{yy}+4\pi M^2 \delta)^{-\frac{1}{2}}\eta_{\cdot,0}}_{L^2}^2$ would appear in the energy functional,
	 see (\ref{definition of mathcal E^l}) and (\ref{definition of mathcal F^l }), with $\delta=1$ corresponding to ion system and $\delta=0$ corresponding to electron system. When considering $l=0$ for electron fluid, such term will become $4 \pi \norm{(-\partial_{yy})^{-\frac{1}{2}}\eta_{-,0}}_{L^2}^2$. But $\xi$ may be $0$ when considering $k=0$ mode. 
	 
	  In section $3$, we will study the dynamics of $k \neq 0$ modes uniformly for ion fluid and electron fluid, for the reason that their corresponding weighted energy functionals are similar and we bound their time derivatives by a similar method.  
	  In addition, terms like $\alpha^{-\frac{1}{2}} f$ also will appear, and $\alpha \neq 0$ in dynamics of $k \neq 0$ modes, where 
	  	\begin{align}
	  		\alpha(t,k,\xi)=k^2+(\xi-kt)^2, \quad k\in\mathbb{Z}, \: \xi \in \mathbb{R},
	  	\end{align}
	  is the symbol associated to operator $-\Delta_L$, with $\Delta_L:=\partial_{XX}+(\partial_{Y}-t \partial_{X})^2 =\partial_x^2+\partial_y^2 = \Delta $. In subsection $3.1$, we will prove Theorem \ref{th1}, and there is loss of derivatives in the dissipation estimates. In subsection $3.2$, we will prove Theorem \ref{th 2}, in which there is no loss of derivatives.  
	
	\: \\
	\noindent \textbf{Notations}

	The Fourier transform of $f(x,y)$ is given by
	\begin{align}
		\widehat{f}(k,\xi)=\iint_{\mathbb{T} \times \mathbb{R}} f(x,y) \mathrm{e}^{-i(kx+\xi y)} dxdy , \nonumber
	\end{align}
	then
	\begin{align}
		f(x,y)= \sum_{k\in \mathbb{Z}} \int_{\mathbb{R}} \widehat{f}(k,\xi) \mathrm{e}^{i(kx+\xi y)} d\xi .
	\end{align}
	The anisotropic Sobolev space $H^{s_1}_x H^{s_2}_y (\mathbb{T} \times \mathbb{R})$ consists of functions that satisfy
	\begin{align}
		\norm{f}_{H^{s_1}_x H^{s_2}_y} := \xkh{\sum_{k} \int_{\mathbb{R}} \lan{k}^{2s_1} \lan{\xi}^{2s_2} \abs{\widehat{f}(k,\xi)}^2 d\xi  }^{\frac{1}{2}}< + \infty , \nonumber
	\end{align}
	and the norm in Sobolev space $H^s(\mathbb{T} \times \mathbb{R})$ is defined as 
	\begin{align}
		\norm{f}_{H^s}:= \xkh{ \sum_{k} \int_{\mathbb{R}} \lan{k,\xi}^{2s} \abs{\widehat{f}(k,\xi)}^2 d\xi   }^{\frac{1}{2}}, \nonumber
	\end{align}
	where $\lan{a}=(1+a^2)^{\frac{1}{2}}$, $\lan{a,b}=(1+a^2+b^2)^{\frac{1}{2}}$, $\forall a, b \in \mathbb{R}$.
	
	In the new reference frame, we also define the functions
	\begin{align}
		&\Pi_{\pm}(t,X,Y)=\eta_{\pm}(t,X+tY,Y) \nonumber \\
		&\Psi_{\pm}(t,X,Y)=\psi_{\pm}(t,X+tY,Y) \nonumber \\
		&\Gamma_{\pm}(t,X,Y)=\omega_{\pm}(t,X+tY,Y) \nonumber \\
		&F_{\pm}(t,X,Y)=	\Pi_{\pm}(t,X,Y)+\Gamma_{\pm}(t,X,Y) \nonumber .
	\end{align}

\section{\textbf{Analysis of $k=0$ mode}}
		In this section, we consider dynamics of $k=0$ mode NSP. By virtue of the linearity of (\ref{linear sys eta psi om 1})-(\ref{linear sys eta psi om 3}) and (\ref{linear elec sys eta psi om 1})-(\ref{linear elec sys eta psi om 3}), the system of zero mode in $x$ is independent to other modes. As a result, we can decouple the $k=0$ mode from the rest modes in the subsequent discussion. 
	\subsection{$k=0$ mode of Ion system}
	\ 
	
	Projecting (\ref{linear sys eta psi om 1})-(\ref{linear sys eta psi om 3}) onto $k=0$ frequency, we obtain
	\begin{align}
		\label{k=0, ion,equ 1}
		&\partial_{t}\eta_{+,0}+\psi_{+,0}=0,\\
		\label{k=0, ion,equ 2}
		&\partial_{t}\psi_{+,0}+\frac{1}{M^2} \partial_{yy}\eta_{+,0}=-4\pi \partial_{yy} (-\partial_{yy} +4 \pi M^2)^{-1} \eta_{+,0}+(\nu+\lambda)\partial_{yy} \psi_{+,0},\\
		\label{k=0, ion,equ 3}
		&\partial_{t}\omega_{+,0}-\psi_{+,0}=\nu \partial_{yy} \omega_{+,0}.
	\end{align}
	When $\nu, \lambda =0$, plugging (\ref{k=0, ion,equ 1}) into (\ref{k=0, ion,equ 2}), we get $\partial_{t}(\eta_{+,0}+\omega_{+,0})=0$, which implies a conservation on $\eta_{+,0}+\omega_{+,0}$. Applying standard $L^2$ energy estimate, it clearly follows that $(\eta_{+,0}, \psi_{+,0}, \omega_{+,0})=(0,0,0)$, if $(\eta_{+,0}^{in}, \psi_{+,0}^{in}, \omega_{+,0}^{in})=(0,0,0)$.
	For $\nu +\lambda>0$, as shown in \cite{Antonelli 2021}, it is convenient to replace (\ref{k=0, ion,equ 3}) by
	\begin{align}
		\partial_{t}(\eta_{+,0}+\omega_{+,0}-\nu M^2 \psi_{+,0})
		=&\nu \partial_{yy}(\eta_{+,0}+\omega_{+,0}-\nu M^2 \psi_{+,0})\nonumber\\
		&+4\pi \nu M^2\partial_{yy}(-\partial_{yy}+4\pi M^2)^{-1}\eta_{+,0} 
		-\nu \lambda M^2 \partial_{yy} \psi_{+,0}, \nonumber
	\end{align}
	which has a better dissipation structure and we have the following decay estimate.
	\begin{theorem} \label{theorem k=0 ion}
		Suppose $\nu, \lambda \geq 0 $, and $\eta_{+,0}^{in}, \psi_{+,0}^{in}, \omega_{+,0}^{in}$ be the initial data of (\ref{k=0, ion,equ 1})-(\ref{k=0, ion,equ 3}). For any $ l \geq 0$, define
		\begin{align}
			\label{definition of mathcal E^l}
			\mathcal{E}^l (t)=&\norm{\partial_y^l \psi_{+,0}(t)}^2_{L^2}
			+ \norm{\partial_y^{l-1} \psi_{+,0}(t)}^2_{L^2} + \frac{1}{M^2}\xkh{ \norm{ \partial_y^{l+1} \eta_{+,0}(t) }^2_{L^2} + \norm{ \partial_y^l \eta_{+,0}(t)  }^2_{L^2} } \nonumber \\
			& +4\pi \xkh{ \norm{\partial_y^{l+1} (-\partial_{yy}+4\pi M^2)^{-\frac{1}{2}}\eta_{+,0}}_{L^2}^2+ \norm{\partial_y^{l} (-\partial_{yy}+4\pi M^2)^{-\frac{1}{2}}\eta_{+,0}}_{L^2}^2 }\nonumber \\
			& + \norm{\partial_y^l \xkh{\omega_{+,0} +\eta_{+,0} -\nu M^2 \psi_{+,0} }(t)}^2_{L^2} . 
		\end{align}
		If $0<\nu+\lambda \leq 1, M^2(\nu +\lambda) \leq 1 , 0\leq \frac{16 \nu M^2}{\nu+\lambda}\leq 1  $, then the following decay estimate holds
		\begin{align}
			\mathcal{E}^l (t) \leq \dfrac{\widetilde{C} \mathcal{E}^l_{in}}{\xkh{\nu C_l t+1 }^{l}} ,
		\end{align}
		where $C_l$ is a constant depending on $l$, with $C_0=0$, and $\widetilde{C}$ is a constant independent to $l$. 
	\end{theorem}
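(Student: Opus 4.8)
The plan is to reduce \eqref{k=0, ion,equ 1}--\eqref{k=0, ion,equ 3}, together with the displayed replacement equation for $G:=\omega_{+,0}+\eta_{+,0}-\nu M^2\psi_{+,0}$, to a family of scalar problems by taking the Fourier transform in $y$; this is legitimate because the $k=0$ mode has no $x$-dependence. For each frequency $\xi$ dual to $y$ one has $\partial_{yy}\mapsto-\xi^2$ and $(-\partial_{yy}+4\pi M^2)^{-1}\mapsto(\xi^2+4\pi M^2)^{-1}$, and $\mathcal{E}^l(t)=\int_{\mathbb{R}}\xi^{2l}e(t,\xi)\,d\xi$ for an energy density $e$ that I will estimate frequency by frequency. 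The first point to record is the algebraic identity that the $\eta$-weights built into $\mathcal{E}^l$, namely $\tfrac1{M^2}\xi^2+4\pi\xi^2(\xi^2+4\pi M^2)^{-1}=:\omega_0^2(\xi)$, are exactly the ones that recast the continuity/momentum pair as the damped oscillator $\partial_{tt}\widehat\eta_{+,0}+(\nu+\lambda)\xi^2\partial_t\widehat\eta_{+,0}+\omega_0^2\widehat\eta_{+,0}=0$; in particular the $4\pi$-weighted norms $\norm{\partial_y^{\,l}(-\partial_{yy}+4\pi M^2)^{-1/2}\eta_{+,0}}_{L^2}^2$ are present precisely to pair with, and cancel, the nonlocal Poisson term in \eqref{k=0, ion,equ 2}.

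Second, I would differentiate $\mathcal{E}^l$ and substitute the equations. The conservative $\eta$--$\psi$ coupling (including the Poisson contribution) cancels by the identity above, the viscous terms yield the good dissipation $-(\nu+\lambda)\norm{\partial_y^{\,l+1}\psi_{+,0}}_{L^2}^2$ with its lower-order companion and $-\nu\norm{\partial_y^{\,l+1}G}_{L^2}^2$, and what is left are the genuinely problematic source terms $4\pi\nu M^2\partial_{yy}(-\partial_{yy}+4\pi M^2)^{-1}\eta_{+,0}$ and $-\nu\lambda M^2\partial_{yy}\psi_{+,0}$ coming from the $G$-equation. I would bound these cross terms by Young's inequality and absorb them into the $\nu$- and $(\nu+\lambda)$-dissipation; this is exactly where $M^2(\nu+\lambda)\le1$ and $0\le\tfrac{16\nu M^2}{\nu+\lambda}\le1$ are used, as they furnish the smallness that makes the coefficients $4\pi\nu M^2$ and $\nu\lambda M^2$ absorbable. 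The target of this step is an energy--dissipation inequality $\tfrac{d}{dt}\mathcal{E}^l\le-c\nu\mathcal{D}^l$ in which $\mathcal{D}^l$ controls every component at one higher derivative.

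Third, to promote this to the algebraic rate I note that $\eta_{+,0}$ carries no viscosity of its own, so the plain dissipation of $\psi_{+,0}$ and $G$ does not by itself yield a decay rate for the oscillator variable. I would add a small, equivalence-preserving hypocoercive correction of the form $\epsilon\,\lan{\partial_y^{\,l}\eta_{+,0},\partial_y^{\,l}\psi_{+,0}}$ (suitably weighted) to convert the $\psi$-dissipation into decay of $\eta_{+,0}$ as well, upgrading $\mathcal{D}^l$ to control $\mathcal{E}^{l+1}$ on the low frequencies that govern the slow algebraic decay, the over-damped high frequencies decaying exponentially and contributing only negligible corrections. Then I would run an induction on $l$: combining $\tfrac{d}{dt}\mathcal{E}^l\le-c\nu\mathcal{E}^{l+1}$ with the Cauchy--Schwarz log-convexity $(\mathcal{E}^l)^2\le\mathcal{E}^{l-1}\mathcal{E}^{l+1}$ in $\xi$ and the inductive decay of $\mathcal{E}^{l-1}$ gives, by a Gr\"onwall-type inequality on the time-weighted quantity $(\nu C_l t+1)^l\mathcal{E}^l$, the bound $\mathcal{E}^l(t)\lesssim\mathcal{E}^l_{in}(\nu C_l t+1)^{-l}$. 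The base case $l=0$ produces no interpolation gain, consistent with $C_0=0$ and the plain boundedness $\mathcal{E}^0(t)\lesssim\mathcal{E}^0_{in}$.

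I expect the crux to be the absorption in the second step rather than the bookkeeping of the third. The self-consistent field enters through terms that carry no viscous smallness and whose coefficients are not integrable in time, so they cannot be treated as perturbations in the manner available for the pure compressible Navier--Stokes system of \cite{Antonelli 2021}. The entire design of $\mathcal{E}^l$---the inclusion of the $4\pi$-weighted $\eta$-norms and the use of the sharp window $0\le\tfrac{16\nu M^2}{\nu+\lambda}\le1$---is aimed at making the electric-field contribution cancel against the oscillator structure or be dominated by the $G$-dissipation; checking that the signs and constants genuinely close, and do not merely look small, is where the real work lies.
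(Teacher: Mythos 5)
Your proposal follows essentially the same route as the paper: the weighted $\eta$-norms in $\mathcal{E}^l$ are indeed chosen so that the conservative $\eta$--$\psi$--Poisson coupling cancels (your oscillator identity $\omega_0^2(\xi)=\tfrac{\xi^2}{M^2}+\tfrac{4\pi\xi^2}{\xi^2+4\pi M^2}$ is exactly what the paper encodes), the cross terms $4\pi\nu M^2\partial_{yy}(-\partial_{yy}+4\pi M^2)^{-1}\eta_{+,0}$ and $\nu\lambda M^2\partial_{yy}\psi_{+,0}$ from the $G$-equation are absorbed by Cauchy--Schwarz precisely under $M^2(\nu+\lambda)\le1$ and $16\nu M^2/(\nu+\lambda)\le1$ as you say, and the paper's coercive functional $E^l=\tfrac12\bigl(\mathcal{E}^l-\tfrac{\nu+\lambda}{2}\lan{\partial_y^l\eta_{+,0},\partial_y^l\psi_{+,0}}\bigr)$ is exactly your hypocoercive correction (the undesired $+\tfrac{\nu+\lambda}{4}\norm{\partial_y^l\psi_{+,0}}_{L^2}^2$ it produces is the reason the lower-order companions $\norm{\partial_y^{l-1}\psi_{+,0}}_{L^2}^2$ etc. already sit inside $\mathcal{E}^l$). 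For the algebraic rate the paper does not induct on $l$: it interpolates each dissipated quantity against the uniform bound $\mathcal{E}^0(t)\lesssim\mathcal{E}^0_{in}$ via $\norm{\partial_y^l f}_{L^2}\lesssim\norm{\partial_y^{l+1}f}_{L^2}^{l/(l+1)}\norm{f}_{L^2}^{1/(l+1)}$ and solves the resulting nonlinear ODE $\tfrac{d}{dt}E^l\lesssim-\nu(E^l)^{(l+1)/l}(\max\{\mathcal{E}^0_{in},\mathcal{E}^l_{in}\})^{-1/l}$; your induction-plus-log-convexity (Fourier-splitting) variant is a standard equivalent and would also close.

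One imprecision you should repair: the intermediate claim that the dissipation $\mathcal{D}^l$ "controls every component at one higher derivative", hence $\tfrac{d}{dt}\mathcal{E}^l\le-c\nu\mathcal{E}^{l+1}$, is not literally true. The density carries no viscosity of its own; the $\eta$-dissipation produced by the hypocoercive correction is $-\tfrac{\nu+\lambda}{4M^2}\norm{\partial_y^{l+1}\eta_{+,0}}_{L^2}^2$ and $-\tfrac{\nu+\lambda}{4}4\pi\norm{\partial_y^{l+1}(-\partial_{yy}+4\pi M^2)^{-\frac12}\eta_{+,0}}_{L^2}^2$, i.e.\ it sits at the \emph{same} derivative order as the corresponding terms of $\mathcal{E}^l$, whereas $\mathcal{E}^{l+1}$ would require $\partial_y^{l+2}$ of these quantities. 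Your log-convexity step $(\mathcal{E}^l)^2\le\mathcal{E}^{l-1}\mathcal{E}^{l+1}$ therefore cannot be applied to the full functional. The fix is what the paper does: for the top-order $\eta$-terms one uses the degenerate "interpolation" $\norm{\partial_y^{l+1}\eta_{+,0}}_{L^2}^2=\norm{\partial_y^{l+1}\eta_{+,0}}_{L^2}^{2(l+1)/l}\norm{\partial_y^{l+1}\eta_{+,0}}_{L^2}^{-2/l}$ and bounds the last factor below by $(\mathcal{E}^l_{in})^{-1/l}$ using the a priori bound $\mathcal{E}^l(t)\lesssim\mathcal{E}^l_{in}$, which is why the paper's rate constant involves $\max\{\mathcal{E}^0_{in},\mathcal{E}^l_{in}\}$ rather than $\mathcal{E}^0_{in}$ alone. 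With that adjustment your argument matches the paper's.
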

	
	\begin{remark}
		\label{remark on k=0 ion energy}
		When $l=0$, $\partial_y ^{l-1} \psi_{\pm,0}= u_{\pm, 0} ^{y}$ .
		It is natural to consider
		\begin{align}
			\label{natural energy}
			\mathcal{E}(t)=\norm{\partial_y^l \psi_{+,0}(t)}^2_{L^2}
			+\frac{1}{M^2} \norm{ \partial_y^{l+1} \eta_{+,0}(t) }^2_{L^2}
			+4\pi \norm{\partial_y^{l+1} (-\partial_{yy}+4\pi M^2)^{-\frac{1}{2}}\eta_{+,0}}_{L^2}^2,
		\end{align}
		when applying standard energy estimate on (\ref{k=0, ion,equ 2}). Directly computing time derivative, we get 
		\begin{align}
			\label{derivative of natural energy}
			\frac{\mathrm{d}}{\mathrm{dt}}\mathcal{E}(t)= -(\nu+\lambda) \norm{\partial_y^{l+1}\psi_{+,0}}_{L^2}.  
		\end{align}
		In order to apply Gr\"onwall's inequality, we need to modify $\mathcal{E}(t)$ as follow
		\begin{align}
			E_1(t)=\mathcal{E}(t)- \frac{\nu+\lambda}{2}\lan{\partial_y^l \eta_{+,0}(t) , \partial_y^l \psi_{+,0}(t)}, \nonumber
		\end{align}
		so that its time derivative generates expression like
		\begin{align} 
			+\frac{\nu+\lambda}{4} \norm{\partial_y^l \psi_{+,0}}_{L^2}^2
			-\frac{\nu+\lambda}{4M^2}\norm{\partial_y^{l+1}\eta_{+,0}}_{L^2}^2
			-\frac{\nu+\lambda}{4}4\pi \norm{\partial_y^{l+1}(-\partial_{yy}+4\pi M^2)^{-\frac{1}{2}}\eta_{+,0}}_{L^2}^2. \nonumber
		\end{align}
		The last two terms contribute to decay on the energy. However, the first term is undesired term. After carefully reviewing (\ref{natural energy}) and (\ref{derivative of natural energy}), we proceed with a second modification to prevent occurrence of such undesired term. Eventually, we get the energy functional $\mathcal{E}^l(t)$ in Theorem \ref{theorem k=0 ion} and its coercive functional $E^l(t)$, which will appear in the following proof.
	\end{remark}
	\begin{remark}
		\label{remark of ion 0 mode initial value}
		From Theorem \ref{theorem k=0 ion}, we infer that system of other modes, i.e. $(\eta_{+}-\eta_{+,0}, \psi_{+}-\psi_{+,0}, \omega_{+}-\omega_{+,0})$ is equivalent to system of $(\eta_{+}, \psi_{+}, \omega_{+})$ with $(\eta_{+,0}^{in}, \psi_{+,0}^{in}, \omega_{+,0}^{in})=(0,0,0)$, and we only consider the latter case when analyzing other modes.
	\end{remark}

	\begin{proof}
		(Proof of Theorem \ref{theorem k=0 ion}) 
		In order to apply Gr\"onwall's inequality, we define the energy functional, i.e.
		\begin{align}
			E^l(t):= \frac{1}{2} 
			\bigg(&
			\mathcal{E}^l(t) -  \frac{\nu+\lambda}{2}\lan{\partial_y^l \eta_{+,0}(t) , \partial_y^l \psi_{+,0}(t)}\nonumber 
			\bigg).
		\end{align}
		Due to $\nu+\lambda \leq 1, M^2(\nu +\lambda) \leq 1$, the functional $E^l(t)$ is coercive
		\begin{align}
			\frac{1}{4} \mathcal{E}^l(t) \leq E^l(t) \leq \mathcal{E}^l(t).\nonumber
		\end{align}
		Directly computing the time derivative of $E^l(t)$, we obtain
		\begin{align}
			\label{derivative of E^l(t)}
			\frac{\mathrm{d}}{\mathrm{dt}}E^l(t) 
			=&-(\nu+\lambda) \norm{\partial_y^{l+1} \psi_{+,0}}_{L^2}^2
			-(\nu+\lambda) \norm{\partial_y^{l}\psi_{+,0}}_{L^2}^2\nonumber \\
			&-\nu \norm{\partial_y^{l+1}(\omega_{+,0}+\eta_{+,0}-\nu M^2 \psi_{+,0})}_{L^2}^2 \nonumber \\
			&-4\pi \nu M^2 \lan{\partial_y^{l+1}(-\partial_{yy}+4\pi M^2)^{-1}\eta_{+,0},\partial_y^{l+1} (\omega_{+,0}+\eta_{+,0}-\nu M^2 \psi_{+,0} )} \nonumber \\
			&+\nu \lambda M^2 \lan{\partial_y^{l+1}\psi_{+,0}, \partial_y^{l+1} (\omega_{+,0}+\eta_{+,0}-\nu M^2 \psi_{+,0})}\nonumber \\
			&+\frac{\nu+\lambda}{4} \norm{\partial_y^l \psi_{+,0}}_{L^2}^2
			-\frac{\nu+\lambda}{4M^2}\norm{\partial_y^{l+1}\eta_{+,0}}_{L^2}^2\nonumber \\
			&-\frac{\nu+\lambda}{4}4\pi \norm{\partial_y^{l+1}(-\partial_{yy}+4\pi M^2)^{-\frac{1}{2}}\eta_{+,0}}_{L^2}^2\nonumber \\
			&+\frac{(\nu+\lambda)^2}{4} \lan{\partial_y^{l+1}\eta_{+,0}, \partial_y^{l+1}\psi_{+,0}}
		\end{align}
		In which, five negative sign dissipative terms, 
		\begin{align}
			&(\nu+\lambda)\norm{\partial_y^{l+1} \psi_{+,0}}_{L^2}^2, \:\:
			(\nu+\lambda) \norm{\partial_y^{l} \psi_{+,0}}_{L^2}^2, \:\:
			\frac{\nu+\lambda}{4M^2} \norm{\partial_y^{l+1}\eta_{+,0}}_{L^2}^2, \nonumber \\
			&\nu \norm{\partial_y^{l+1} (\omega_{+,0}+\eta_{+,0}-\nu M^2\psi_{+,0}) }_{L^2}^2 , \:\:
			\frac{\nu+\lambda}{4}4\pi \norm{\partial_y^{l+1} (-\partial_{yy}+4\pi M^2)^{-\frac{1}{2}}\eta_{+,0}}_{L^2}^2 ,\nonumber
		\end{align}
		can absorb all other terms in the right-hand side as follows. 
		\begin{align}
			\label{hard term 1 in derivative of E^l(t)}
			&-4\pi \nu M^2 \lan{\partial_y^{l+1}(-\partial_{yy}+4\pi M^2)^{-1}\eta_{+,0},\partial_y^{l+1} (\omega_{+,0}+\eta_{+,0}-\nu M^2 \psi_{+,0} )} \nonumber \\
			\leq& 2\sqrt{\pi}\nu M \norm{ \frac{(i\xi)^{l+1}}{(\xi^2+4\pi M^2)^{\frac{1}{2}}} \widehat{\eta_{+,0}}(\xi) }_{L^2} \norm{\partial_y^{l+1} (\omega_{+,0}+\eta_{+,0}-\nu M^2 \psi_{+,0} )}_{L^2}\nonumber \\
			\leq& \frac{\nu+\lambda}{8}4\pi \norm{\partial_y^{l+1}(-\partial_{yy}+4\pi M^2)^{-\frac{1}{2}}\eta_{+,0}}_{L^2}^2+\frac{1}{4}\nu\norm{\partial_y^{l+1} (\omega_{+,0}+\eta_{+,0}-\nu M^2 \psi_{+,0} )}_{L^2}^2,
		\end{align}
		where we have employed $0\leq \frac{16 \nu M^2}{\nu+\lambda}\leq 1 $. 
		\begin{align}
			\label{hard term 2 in derivative of E^l(t)}
			&\nu \lambda M^2 \lan{\partial_y^{l+1}\psi_{+,0}, \partial_y^{l+1} (\omega_{+,0}+\eta_{+,0}-\nu M^2 \psi_{+,0})} \nonumber \\
			\leq& \frac{\nu}{2} \norm{ \partial_y^{l+1}\psi_{+,0} }_{L^2}^2
			+\frac{\nu}{2} \norm{ \partial_y^{l+1} (\omega_{+,0}+\eta_{+,0}-\nu M^2 \psi_{+,0}) }_{L^2}^2,
		\end{align}
		where we have used $\lambda M^2\leq 1$, easily obtained from $M^2(\nu+\lambda)\leq1  $. 
		\begin{align}
			\label{hard term 3 in derivative of E^l(t)}
			&\frac{(\nu+\lambda)^2}{4} \lan{\partial_y^{l+1}\eta_{+,0}, \partial_y^{l+1}\psi_{+,0}}\nonumber \\
			\leq&\frac{\nu+\lambda}{8} \norm{\partial_y^{l+1}\psi_{+,0} }_{L^2}^2
			+ \xkh{M(\nu+\lambda)}^2\frac{\nu+\lambda}{8M^2}\norm{\partial_y^{l+1}\eta_{+,0} }_{L^2}^2.
		\end{align}
		Combining (\ref{derivative of E^l(t)})-(\ref{hard term 3 in derivative of E^l(t)}), we obtain
		\begin{align}
			\label{estimate of derivative of E^l(t)}
			\frac{\mathrm{d}}{\mathrm{dt}}E^l(t) 
			\leq& -\frac{3}{8}\nu \norm{\partial_y^{l+1} \psi_{+,0}}_{L^2}^2
			-\frac{3}{4}\nu \norm{\partial_y^{l}\psi_{+,0}}_{L^2}^2
			-\frac{\nu}{8} \frac{1}{M^2} \norm{\partial_y^{l+1}\eta_{+,0} }_{L^2}^2 \nonumber \\
			& -\frac{1}{4}\nu \norm{\partial_y^{l+1}(\omega_{+,0}+\eta_{+,0}-\nu M^2 \psi_{+,0})}_{L^2}^2  \nonumber \\
			&-\frac{\nu}{8} 4\pi \norm{\partial_y^{l+1}(-\partial_{yy}+4\pi M^2)^{-\frac{1}{2}}\eta_{+,0}}_{L^2}^2.
		\end{align}
		In account of the coercive of $E^l(t)$ and the above estimate, it is obvious that
		\begin{align}
			\label{mathcal E estimate}
			\mathcal{E}^l(t)\lesssim \mathcal{E}^l_{in}.
		\end{align}
		In particular, for $l=0$ we have
		\begin{align}
			\label{mathcal E estimate l=0}
			&\norm{ \psi_{+,0}(t)}^2_{L^2}
			+ \norm{\partial_y^{-1} \psi_{+,0}(t)}^2_{L^2} + \frac{1}{M^2}\norm{  \eta_{+,0}(t)  }^2_{L^2} +4\pi  \norm{ (-\partial_{yy}+4\pi M^2)^{-\frac{1}{2}}\eta_{+,0}(t)}_{L^2}^2  \nonumber \\
			& 
			+ \norm{ \xkh{\omega_{+,0} +\eta_{+,0} -\nu M^2 \psi_{+,0} }(t)}^2_{L^2}
			\lesssim \mathcal{E}^0_{in}.
		\end{align}
		For $l\geq 1$, applying interpolation inequality
		\begin{align}
			\norm{\partial_y^l f}_{L^2}
			\lesssim \norm{\partial_y^{l+1}f}_{L^2}^{\frac{l}{l+1}}
			\norm{f}_{L^2}^{\frac{1}{l+1}},
		\end{align}
		one has
		\begin{align}
			&\norm{\partial_y^{l+1} \psi_{+,0}}_{L^2}^2
			+\norm{\partial_y^{l}\psi_{+,0}}_{L^2}^2
			+\frac{1}{M^2} \norm{\partial_y^{l+1}\eta_{+,0} }_{L^2}^2 \nonumber \\
			& +\norm{\partial_y^{l+1}(\omega_{+,0}+\eta_{+,0}-\nu M^2 \psi_{+,0})}_{L^2}^2  
			+4\pi \norm{\partial_y^{l+1}(-\partial_{yy}+4\pi M^2)^{-\frac{1}{2}}\eta_{+,0}}_{L^2}^2 \nonumber \\
			\gtrsim& \norm{\partial_y^{l} \psi_{+,0}}_{L^2}^{2\frac{l+1}{l}}
			\norm{\psi_{+,0}}_{L^2}^{-2\frac{1}{l}}
			+ \norm{\partial_y^{l-1}\psi_{+,0}}_{L^2}^{2\frac{l+1}{l}}
			\norm{\partial_y^{-1}\psi_{+,0}}_{L^2}^{-2\frac{1}{l}}
			\nonumber\\
			&+ \norm{\frac{1}{M}\partial_y^l  \eta_{+,0}}_{L^2}^{2\frac{l+1}{l}}
			\norm{\frac{1}{M}\eta_{+,0}}_{L^2}^{-2\frac{1}{l}}+ \norm{\frac{1}{M}\partial_y^{l+1}\eta_{+,0}}_{L^2}^{2\frac{l+1}{l}} \norm{\frac{1}{M}\partial_y^{l+1}\eta_{+,0}}_{L^2}^{-\frac{2}{l}}
			\nonumber \\
			&+\norm{\partial_y^l (\omega_{+,0}+\eta_{+,0}-\nu M^2 \psi_{+,0})}_{L^2}^{2\frac{l+1}{l}}
			\norm{\omega_{+,0}+\eta_{+,0}-\nu M^2 \psi_{+,0}}_{L^2}^{-2\frac{1}{l}}
			\nonumber \\
			&+4\pi \norm{\partial_y^l (-\partial_{yy}+4\pi M^2)^{-\frac{1}{2}}\eta_{+,0}}_{L^2}^{2\frac{l+1}{l}}
			\norm{(-\partial_{yy}+4\pi M^2)^{-\frac{1}{2}}\eta_{+,0} }_{L^2}^{-2\frac{1}{l}} \nonumber \\
			& +\norm{\sqrt{4\pi}\partial_y^{l+1}(-\partial_{yy}+4\pi M^2)^{-\frac{1}{2}}\eta_{+,0} }_{L^2}^{2\frac{l+1}{l}} \norm{\sqrt{4\pi}\partial_y^{l+1}(-\partial_{yy}+4\pi M^2)^{-\frac{1}{2}}\eta_{+,0} }_{L^2}^{-\frac{2}{l}} \nonumber \\
			\gtrsim& \Bigg(
			\norm{\partial_y^{l} \psi_{+,0}}_{L^2}^{2\frac{l+1}{l}}
			+ \norm{\partial_y^{l-1}\psi_{+,0}}_{L^2}^{2\frac{l+1}{l}}
			+ \norm{\frac{1}{M}\partial_y^l \eta_{+,0}}_{L^2}^{2\frac{l+1}{l}}\nonumber \\
			&+\norm{\partial_y^l (\omega_{+,0}+\eta_{+,0}-\nu M^2 \psi_{+,0})}_{L^2}^{2\frac{l+1}{l}}
			+4\pi \norm{\partial_y^l (-\partial_{yy}+4\pi M^2)^{-\frac{1}{2}}\eta_{+,0}}_{L^2}^{2\frac{l+1}{l}}	
			\Bigg) (\mathcal{E}^o_{in})^{-\frac{1}{l}}\nonumber \\
			&+\Bigg(
			\norm{\frac{1}{M}\partial_y^{l+1}\eta_{+,0}}_{L^2}^{2\frac{l+1}{l}} 
			+
			\norm{\sqrt{4\pi}\partial_y^{l+1}(-\partial_{yy}+4\pi M^2)^{-\frac{1}{2}}\eta_{+,0} }_{L^2}^{2\frac{l+1}{l}}
			\Bigg) (\mathcal{E}^l_{in})^{-\frac{1}{l}} ,\nonumber
		\end{align}
		where in the last inequality we have used (\ref{mathcal E estimate}) and (\ref{mathcal E estimate l=0}). For $l\geq 1$, we get
		\begin{align}
			&\norm{\partial_y^{l+1} \psi_{+,0}}_{L^2}^2
			+\norm{\partial_y^{l}\psi_{+,0}}_{L^2}^2
			+\frac{1}{M^2} \norm{\partial_y^{l+1}\eta_{+,0} }_{L^2}^2 \nonumber \\
			& +\norm{\partial_y^{l+1}(\omega_{+,0}+\eta_{+,0}-\nu M^2 \psi_{+,0})}_{L^2}^2  
			+4\pi \norm{\partial_y^{l+1}(-\partial_{yy}+4\pi M^2)^{-\frac{1}{2}}\eta_{+,0}}_{L^2}^2 \nonumber \\
			\gtrsim& \Bigg(
			\norm{\partial_y^{l} \psi_{+,0}}_{L^2}^{2}
			+ \norm{\partial_y^{l-1}\psi_{+,0}}_{L^2}^{2}
			+ \frac{1}{M^2}\norm{\partial_y^l \eta_{+,0}}_{L^2}^{2}\nonumber \\
			&+\norm{\partial_y^l (\omega_{+,0}+\eta_{+,0}-\nu M^2 \psi_{+,0})}_{L^2}^{2}
			+4\pi \norm{\partial_y^l (-\partial_{yy}+4\pi M^2)^{-\frac{1}{2}}\eta_{+,0}}_{L^2}^{2}	
			\Bigg)^{\frac{l+1}{l}} (\mathcal{E}^o_{in})^{-\frac{1}{l}}\nonumber \\
			&+\Bigg(
			\frac{1}{M^2}\norm{\partial_y^{l+1}\eta_{+,0}}_{L^2}^{2} 
			+
			4\pi \norm{\partial_y^{l+1}(-\partial_{yy}+4\pi M^2)^{-\frac{1}{2}}\eta_{+,0} }_{L^2}^{2}
			\Bigg)^{\frac{l+1}{l}} (\mathcal{E}^l_{in})^{-\frac{1}{l}} \nonumber \\
			\geq & \: (\mathcal{E}^l(t))^{\frac{l+1}{l}} \:\: \xkh{\mathrm{max}\dkh{\mathcal{E}^0_{in}, \mathcal{E}^l_{in}}}^{-\frac{1}{l}}.\nonumber
		\end{align}
		Thanks to the coercive property of $E^l(t)$, combining the above estimate with (\ref{estimate of derivative of E^l(t)}), we infer 
		\begin{align}
			\frac{\mathrm{d}}{\mathrm{dt}}E^l(t)
			\leq -\nu C  \xkh{\mathrm{max}\dkh{\mathcal{E}^0_{in}, \mathcal{E}^l_{in}}}^{-\frac{1}{l}} ( E^l(t) )^{\frac{l+1}{l}},
		\end{align}
		where $C$ is a constant depending on $l$. Therefore, applying Gr\"onwall's inequality, we obtain
		\begin{align}
			E^l(t)\leq\dfrac{E^l_{in}}{\xkh{\nu C (E^l_{in})^\frac{1}{l}\xkh{\mathrm{max}\dkh{\mathcal{E}^0_{in}, \mathcal{E}^l_{in}}}^{-\frac{1}{l}} t+1   }^l}.
		\end{align}
		Applying the coercive property of $E^l(t)$ again, we conclude the proof of Theorem \ref{theorem k=0 ion}.
	\end{proof}

	\subsection{$k=0$ mode of Electron system}
	\ 
	
	As shown in ion case, we project (\ref{linear elec sys eta psi om 1})-(\ref{linear elec sys eta psi om 3}) onto $k=0$ frequency, we get
	\begin{align}
		\label{electron k=0 equ 1}
		&\partial_{t}\eta_{-,0}+\psi_{-,0}=0,\\
		\label{electron k=0 equ 2 }
		&\partial_{t}\psi_{-,0}+\frac{1}{M^2}\partial_{yy}\eta_{-,0}=4\pi \eta_{-,0}+(\nu+\lambda)\partial_{yy} \psi_{-,0},\\
		\label{electron k=0 equ 3 }
		&\partial_{t}\omega_{-,0}-\psi_{-,0}=\nu\partial_{yy}\omega_{-,0}.
	\end{align}
	Plugging (\ref{electron k=0 equ 1}) into (\ref{electron k=0 equ 2 }), we obtain
	\begin{align}
		\label{electron plug equ1 into equ2}
		-\partial_{tt}\eta_{-,0} +\frac{1}{M^2}\partial_{yy}\eta_{-,0}=4\pi \eta_{-,0}-(\nu+\lambda)\partial_{yy} \partial_{t}\eta_{-,0}.
	\end{align}
	When $\nu=\lambda=0$, taking inner product on both sides of (\ref{electron plug equ1 into equ2}) with $\partial_{t}\eta_{-,0}$, we have
	\begin{align}
		\frac{\mathrm{d}}{\mathrm{dt}}
		\xkh{
			\norm{\partial_{t}\eta_{-,0}}_{L^2}^2
			+ \frac{1}{M^2}\norm{\partial_y\eta_{-,0}}_{L^2}^2
			+4\pi \norm{\eta_{-,0}}_{L^2}^2
		}
		=0
	\end{align}
	It follows that $(\eta_{-,0},\psi_{-,0},\omega_{-,0})=(0,0,0)$, if $ (\eta_{-,0}^{in},\psi_{-,0}^{in},\omega_{-,0}^{in})=(0,0,0)$. For $\nu+\lambda>0$, we have the following theorem.
	\begin{theorem}
		\label{theorem electron k=0}
		Suppose $\nu, \lambda \geq 0 $, and $\eta_{-,0}^{in}, \psi_{-,0}^{in}, \omega_{-,0}^{in}$ be the initial data of (\ref{electron k=0 equ 1})-(\ref{electron k=0 equ 3 }). For any $ l \geq 0$, define
		\begin{align}
			\label{definition of mathcal F^l }
			\mathcal{F}^l (t)=&\norm{\partial_y^l \psi_{-,0}(t)}^2_{L^2}
			+ \norm{\partial_y^{l-1} \psi_{-,0}(t)}^2_{L^2} + \norm{\partial_y^{l-2} \psi_{-,0}(t)}^2_{L^2}\nonumber \\
			&+ \frac{1}{M^2}\xkh{ \norm{ \partial_y^{l+1} \eta_{-,0}(t) }^2_{L^2} + \norm{ \partial_y^l \eta_{-,0}(t)  }^2_{L^2} +\norm{ \partial_y^{l-1} \eta_{-,0}(t)  }^2_{L^2}} \nonumber \\
			& +4\pi \xkh{ \norm{\partial_y^l \eta_{-,0}}_{L^2}^2+ \norm{\partial_y^{l-1} \eta_{-,0}}_{L^2}^2
				+\norm{\partial_y^{l-2} \eta_{-,0}}_{L^2}^2}\nonumber \\
			& + \norm{\partial_y^l \xkh{\omega_{-,0} +\eta_{-,0} -\nu M^2 \psi_{-,0} }(t)}^2_{L^2} . 
		\end{align}
		If $0<\nu+\lambda \leq 1, M^2(\nu +\lambda) \leq 1 , 0\leq \frac{32 \pi\nu M^4}{\nu+\lambda}\leq 1  $, then the following decay estimate holds
		\begin{align}
			\mathcal{F}^l(t)\leq\dfrac{\widetilde{C} \mathcal{F}^l_{in}}{\xkh{\nu C_l  t+1   }^l} ,
		\end{align}
		where $C_l$ is a constant depending on $l$, with $C_0=0$, and $\widetilde{C}$ is a constant independent to $l$. 
	\end{theorem}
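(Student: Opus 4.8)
The plan is to follow the proof of Theorem~\ref{theorem k=0 ion} almost verbatim, the sole structural novelty being that the electron electric-field term $4\pi\eta_{-,0}$ is local and undamped, whereas its ion counterpart $-4\pi\partial_{yy}(-\partial_{yy}+4\pi M^2)^{-1}\eta_{+,0}$ carried a smoothing resolvent. First I would derive the equation for the good unknown $G:=\eta_{-,0}+\omega_{-,0}-\nu M^2\psi_{-,0}$. Combining the three equations (\ref{electron k=0 equ 1})--(\ref{electron k=0 equ 3 }) and adding and subtracting $\nu^2M^2\partial_{yy}\psi_{-,0}$ to reconstruct $\nu\partial_{yy}G$, one finds
\begin{align}
\partial_t G=\nu\partial_{yy}G-4\pi\nu M^2\eta_{-,0}-\nu\lambda M^2\partial_{yy}\psi_{-,0}. \nonumber
\end{align}
The term $-4\pi\nu M^2\eta_{-,0}$, carrying neither a derivative nor a resolvent, is the root of the only genuine difficulty.

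Next I would introduce the coercive functional obtained by correcting $\mathcal{F}^l$ with cross terms at \emph{two} consecutive levels,
\begin{align}
F^l(t):=\frac12\xkh{\mathcal{F}^l(t)-\frac{\nu+\lambda}{2}\lan{\partial_y^l\eta_{-,0},\partial_y^l\psi_{-,0}}-\frac{\nu+\lambda}{2}\lan{\partial_y^{l-1}\eta_{-,0},\partial_y^{l-1}\psi_{-,0}}}, \nonumber
\end{align}
rather than the single correction used in the ion case. Using $\nu+\lambda\le1$ and $M^2(\nu+\lambda)\le1$, the two inner products are dominated by the $\norm{\partial_y^j\psi_{-,0}}_{L^2}^2$ and $\tfrac1{M^2}\norm{\partial_y^j\eta_{-,0}}_{L^2}^2$ entries of $\mathcal{F}^l$, so $\tfrac14\mathcal{F}^l(t)\le F^l(t)\le\mathcal{F}^l(t)$. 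Differentiating, the three energy blocks of $\mathcal{F}^l$ (at levels $l,l-1,l-2$) leave only the damping $-(\nu+\lambda)\norm{\partial_y^{j+1}\psi_{-,0}}_{L^2}^2$ once the non-dissipative couplings cancel pairwise; the $G$-equation produces $-\nu\norm{\partial_y^{l+1}G}_{L^2}^2$, the coupling $-4\pi\nu M^2\lan{\partial_y^lG,\partial_y^l\eta_{-,0}}$ and the term $\nu\lambda M^2\lan{\partial_y^{l+1}\psi_{-,0},\partial_y^{l+1}G}$; and each level-$j$ correction, $j\in\dkh{l,l-1}$, yields the favourable terms $-\frac{\nu+\lambda}{4M^2}\norm{\partial_y^{j+1}\eta_{-,0}}_{L^2}^2$ and $-\frac{\nu+\lambda}{4}4\pi\norm{\partial_y^j\eta_{-,0}}_{L^2}^2$, an undesired $+\frac{\nu+\lambda}{4}\norm{\partial_y^j\psi_{-,0}}_{L^2}^2$, and a higher-order $\frac{(\nu+\lambda)^2}{4}\lan{\partial_y^{j+1}\eta_{-,0},\partial_y^{j+1}\psi_{-,0}}$. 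The undesired $\psi$ term at level $j$ is absorbed by the damping of the block one level below, which is precisely why three blocks are needed; the $\nu\lambda M^2$ and higher-order terms are absorbed by Young's inequality as in (\ref{hard term 2 in derivative of E^l(t)}) and (\ref{hard term 3 in derivative of E^l(t)}).

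The crux, and the principal obstacle, is the coupling $-4\pi\nu M^2\lan{\partial_y^lG,\partial_y^l\eta_{-,0}}$: lacking the resolvent that in the ion case placed both factors at the dissipative level $l+1$, I would integrate by parts once to rewrite it as $4\pi\nu M^2\lan{\partial_y^{l+1}G,\partial_y^{l-1}\eta_{-,0}}$, so that $\partial_y^{l+1}G$ now sits at the dissipative level. Young's inequality then bounds it by $\frac{\nu}{4}\norm{\partial_y^{l+1}G}_{L^2}^2+16\pi^2\nu M^4\norm{\partial_y^{l-1}\eta_{-,0}}_{L^2}^2$; the first term is swallowed by the $G$-dissipation, and the second by the favourable $\frac{\nu+\lambda}{4}4\pi\norm{\partial_y^{l-1}\eta_{-,0}}_{L^2}^2$ coming from the level-$(l-1)$ correction, precisely under the hypothesis $\frac{32\pi\nu M^4}{\nu+\lambda}\le1$. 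This accounts for the extra power of $M^2$ relative to the ion condition $\frac{16\nu M^2}{\nu+\lambda}\le1$, and it is the only place where the stronger smallness on $M$ is used.

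After these absorptions one arrives at a dissipation inequality of the same shape as (\ref{estimate of derivative of E^l(t)}), and the remainder is identical to the ion argument. For $l=0$ the coercivity of $F^0$ gives $\mathcal{F}^0(t)\lesssim\mathcal{F}^0_{in}$. For $l\ge1$, the interpolation inequality $\norm{\partial_y^lf}_{L^2}\lesssim\norm{\partial_y^{l+1}f}_{L^2}^{\frac{l}{l+1}}\norm{f}_{L^2}^{\frac{1}{l+1}}$---applied level by level, the low-order norms down to $\partial_y^{l-2}$ being supplied exactly by the three-block structure and controlled through the $l=0$ bound---upgrades the dissipation inequality to $\frac{\mathrm{d}}{\mathrm{dt}}F^l(t)\le-\nu C\xkh{\mathrm{max}\dkh{\mathcal{F}^0_{in},\mathcal{F}^l_{in}}}^{-\frac1l}\xkh{F^l(t)}^{\frac{l+1}{l}}$. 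Gr\"onwall's inequality together with the coercivity of $F^l$ then yields the stated algebraic decay.
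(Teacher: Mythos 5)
Your proposal is correct and follows essentially the same route as the paper's proof: the same doubly-corrected coercive functional $F^l$, the same identification of $-4\pi\nu M^2\lan{\partial_y^l\eta_{-,0},\partial_y^l(\omega_{-,0}+\eta_{-,0}-\nu M^2\psi_{-,0})}$ as the crux, the same integration by parts placing $\partial_y^{l+1}$ on the good unknown so that the hypothesis $\frac{32\pi\nu M^4}{\nu+\lambda}\le 1$ closes the absorption against $4\pi\norm{\partial_y^{l-1}\eta_{-,0}}_{L^2}^2$, and the same interpolation--Gr\"onwall conclusion. No gaps.
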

	\begin{remark}
		\label{remark on k=0 electron energy} 
		As discussed in Remark \ref{remark on k=0 ion energy}, we initially define 
		\begin{align}
			\mathcal{F}(t)=&\norm{\partial_y^l \psi_{-,0}(t)}^2_{L^2}
			+ \norm{\partial_y^{l-1} \psi_{-,0}(t)}^2_{L^2} 
			+ \frac{1}{M^2}\xkh{ \norm{ \partial_y^{l+1} \eta_{-,0}(t) }^2_{L^2} + \norm{ \partial_y^l \eta_{-,0}(t)  }^2_{L^2} } \nonumber \\
			& +4\pi \xkh{ \norm{\partial_y^l \eta_{-,0}}_{L^2}^2+ \norm{\partial_y^{l-1} \eta_{-,0}}_{L^2}^2 }
			+ \norm{\partial_y^l \xkh{\omega_{-,0} +\eta_{-,0} -\nu M^2 \psi_{-,0} }(t)}^2_{L^2} , \nonumber
		\end{align}
		and
		\begin{align}
			F(t):= \frac{1}{2} 
			\bigg(&
			\mathcal{F}^l(t) -  \frac{\nu+\lambda}{2}\lan{\partial_y^l \eta_{-,0}(t) , \partial_y^l \psi_{-,0}(t)} \nonumber 
			\bigg).
		\end{align}
		The time derivative of the last term in $\mathcal{F}(t)$ generates
		\begin{align}
			\label{bad term in energy derivative k=0 electron}
			4\pi \nu M^2 \lan{\partial_y^{l-1} \eta_{-,0}, \partial_y^{l+1}(\omega_{-,0}+\eta_{-,0}-\nu M^2\psi_{-,0})} .
		\end{align}
		Then we modify $F(t)$ as follow
		\begin{align}
			F(t):= \frac{1}{2} 
			\bigg(&
			\mathcal{F}^l(t) -  \frac{\nu+\lambda}{2}\lan{\partial_y^l \eta_{-,0}(t) , \partial_y^l \psi_{-,0}(t)}
			-  \frac{\nu+\lambda}{2}\lan{\partial_y^{l-1} \eta_{-,0}(t) , \partial_y^{l-1} \psi_{-,0}(t)} \nonumber 
			\bigg).
		\end{align}
		The time derivative of the last term generates expression
		\begin{align}
			+\frac{\nu+\lambda}{4}\norm{\partial_y^{l-1}\psi_{-,0}}_{L^2}^2
			-\frac{\nu+\lambda}{4}4\pi\norm{\partial_y^{l-1}\eta_{-,0}}_{L^2}^2. \nonumber
		\end{align}
		The latter term can absorbed $+\norm{\partial_y^{l-1}\eta_{-,0}}_{L^2}^2$ coming from (\ref{bad term in energy derivative k=0 electron}) after applying Cauchy inequality.
		For the former term, as shown in Remark \ref{remark on k=0 ion energy}, we need a second modification and we eventually obtain $\mathcal{F}^l(t)$ and $F^l(t)$.
	\end{remark}
	\begin{remark}
		\label{remark of electron 0 mode initial value}
		From Theorem \ref{theorem electron k=0}, we also infer that system of other modes, i.e. $(\eta_{-}-\eta_{-,0}, \psi_{-}-\psi_{-,0}, \omega_{-}-\omega_{-,0})$ is equivalent to system of $(\eta_{-}, \psi_{-}, \omega_{-})$ with $(\eta_{-,0}^{in}, \psi_{-,0}^{in}, \omega_{-,0}^{in})=(0,0,0)$, and we also only consider the latter case in the subsequent analysis.
	\end{remark}
	\begin{proof}(Proof of Theorem \ref{theorem electron k=0}) 
		We define the energy functional, i.e.
		\begin{align}
			F^l(t):= \frac{1}{2} 
			\bigg(&
			\mathcal{F}^l(t) -  \frac{\nu+\lambda}{2}\lan{\partial_y^l \eta_{-,0}(t) , \partial_y^l \psi_{-,0}(t)}
			-  \frac{\nu+\lambda}{2}\lan{\partial_y^{l-1} \eta_{-,0}(t) , \partial_y^{l-1} \psi_{-,0}(t)} \nonumber 
			\bigg).
		\end{align}
		Due to $\nu+\lambda \leq 1, M^2(\nu +\lambda) \leq 1$, the functional $F^l(t)$ is coercive
		\begin{align}
			\frac{1}{4} \mathcal{F}^l(t) \leq F^l(t) \leq \mathcal{F}^l(t).\nonumber
		\end{align}
		Directly computing the time derivative of $F^l(t)$, we get
		\begin{align}
			\label{derivative of F^l(t)}
			\frac{\mathrm{d}}{\mathrm{dt}}F^l(t) 
			=& -(\nu+\lambda) \norm{\partial_y^{l+1}\psi_{-,0}}_{L^2}^2
			-(\nu+\lambda) \norm{\partial_y^l\psi_{-,0}}_{L^2}^2
			-(\nu+\lambda) \norm{\partial_y^{l-1}\psi_{-,0}}_{L^2}^2 \nonumber \\
			&-\nu \norm{\partial_y^{l+1}(\omega_{-,0}+\eta_{-,0}-\nu M^2 \psi_{-,0})}_{L^2}^2 \nonumber \\
			&-4\pi \nu M^2 \lan{\partial_y^l \eta_{-,0}, \partial_y^l(\omega_{-,0}+\eta_{-,0}-\nu M^2\psi_{-,0})}\nonumber \\
			&+\nu \lambda M^2 \lan{\partial_y^{l+1}\psi_{-,0}, \partial_y^{l+1}(\omega_{-,0}+\eta_{-,0}-\nu M^2\psi_{-,0})}\nonumber \\
			&+\frac{\nu+\lambda}{4}\norm{\partial_y^l\psi_{-,0}}_{L^2}^2
			-\frac{\nu+\lambda}{4M^2}\norm{\partial_y^{l+1}\eta_{-,0}}_{L^2}^2\nonumber \\
			&-\frac{\nu+\lambda}{4}4\pi \norm{\partial_y^l\eta_{-,0}}_{L^2}^2
			+ \frac{(\nu+\lambda)^2}{4} \lan{\partial_y^{l+1}\eta_{-,0},\partial_y^{l+1}\psi_{-,0}}\nonumber \\
			&+\frac{\nu+\lambda}{4}\norm{\partial_y^{l-1}\psi_{-,0}}_{L^2}^2
			-\frac{\nu+\lambda}{4M^2}\norm{\partial_y^l \eta_{-,0}}_{L^2}^2\nonumber \\
			&-\frac{\nu+\lambda}{4}4\pi\norm{\partial_y^{l-1}\eta_{-,0}}_{L^2}^2
			+\frac{(\nu+\lambda)^2}{4}\lan{\partial_y^l\eta_{-,0},\partial_y^l \psi_{-,0}}.
		\end{align}
		Analogous to ion case, the dissipative terms can absorb the mixed terms and positive terms, but with different technical considerations. For this reason, we discuss the ion case and electron case separately in the analysis of $k=0$ mode.
		
		 From $0\leq \frac{32 \pi\nu M^4}{\nu+\lambda}\leq 1$ and integration by parts, we get
		\begin{align}
			&-4\pi \nu M^2 \lan{\partial_y^l \eta_{-,0}, \partial_y^l(\omega_{-,0}+\eta_{-,0}-\nu M^2\psi_{-,0})}\nonumber \\
			\leq& \frac{\nu+\lambda}{8} 4\pi \norm{\partial_y^{l-1} \eta_{-,0}}_{L^2}^2
			+ \frac{\nu}{4}\norm{ \partial_y^{l+1}(\omega_{-,0}+\eta_{-,0}-\nu M^2\psi_{-,0}) }_{L^2}^2 .\nonumber
		\end{align}
		It should be noted that, unlike (\ref{hard term 1 in derivative of E^l(t)}) in ion case, directly applying Cauchy inequality on this term does not work, as it produces  $\norm{ \partial_y^{l}(\omega_{-,0}+\eta_{-,0}-\nu M^2\psi_{-,0}) }_{L^2}^2$, which eventually leads to $\norm{ \partial_y^{l-1}(\omega_{-,0}+\eta_{-,0}-\nu M^2\psi_{-,0}) }_{L^2}^2$ after using interpolation  inequality. But there is no such term in $\mathcal{F}^l(t)$, making it difficult to apply Gr\"onwall's inequality in subsequent analysis. 
		
		We bound other mixed terms in the right-hand side of (\ref{derivative of F^l(t)}) as follows. By $M^2(\nu +\lambda) \leq 1$ and Cauchy inequality, we obtain
		\begin{align}
			&\nu \lambda M^2 \lan{\partial_y^{l+1}\psi_{-,0}, \partial_y^{l+1}(\omega_{-,0}+\eta_{-,0}-\nu M^2\psi_{-,0})}\nonumber \\
			\leq& \frac{\nu}{2} \norm{\partial_y^{l+1}\psi_{-,0}}_{L^2}^2
			+\frac{\nu}{2} \norm{\partial_y^{l+1}(\omega_{-,0}+\eta_{-,0}-\nu M^2 \psi_{-,0})}_{L^2}^2,\nonumber
		\end{align}
		and
		\begin{align}
			&\frac{(\nu+\lambda)^2}{4} \lan{\partial_y^{l+1}\eta_{-,0}, \partial_y^{l+1}\psi_{-,0}}\nonumber \\
			\leq & \frac{\nu+\lambda}{8} \norm{\partial_y^{l+1}\psi_{-,0} }_{L^2}^2
			+ \frac{\nu+\lambda}{8M^2}\norm{\partial_y^{l+1}\eta_{-,0} }_{L^2}^2 .\nonumber
		\end{align}
		Similarly, we have
		\begin{align}
			&\frac{(\nu+\lambda)^2}{4} \lan{\partial_y^{l}\eta_{-,0}, \partial_y^{l}\psi_{-,0}}\nonumber \\
			\leq & \frac{\nu+\lambda}{8} \norm{\partial_y^{l}\psi_{-,0} }_{L^2}^2
			+ \frac{\nu+\lambda}{8M^2}\norm{\partial_y^{l}\eta_{-,0} }_{L^2}^2 .\nonumber
		\end{align}
		Therefore, we have
		\begin{align}
			\label{estimate derivative of F^l(t)}		
			\frac{\mathrm{d}}{\mathrm{dt}}F^l(t) 
			\leq& -\frac{\nu}{8} \Bigg(
			\norm{\partial_y^{l+1}\psi_{-,0}}_{L^2}^2
			+\norm{\partial_y^l\psi_{-,0}}_{L^2}^2
			+\norm{\partial_y^{l-1}\psi_{-,0}}_{L^2}^2 \nonumber \\
			&+4\pi \norm{\partial_y^l\eta_{-,0}}_{L^2}^2
			+4\pi \norm{\partial_y^{l-1}\eta_{-,0}}_{L^2}^2
			+\frac{1}{M^2}\norm{\partial_y^{l+1}\eta_{-,0}}_{L^2}^2
			+\frac{1}{M^2}\norm{\partial_y^l\eta_{-,0}}_{L^2}^2 \nonumber \\
			&+\norm{ 		 \partial_y^{l+1}(\omega_{-,0}+\eta_{-,0}-\nu M^2\psi_{-,0}) }_{L^2}^2 
			\Bigg) .
		\end{align}
		In account of the coercive of $F^l(t)$ and the above estimate, we have
		\begin{align}
			\mathcal{F}^l(t)\lesssim \mathcal{F}^l_{in}.
		\end{align}
		In particular, for $l=0$ we have
		\begin{align}
			&\norm{ \psi_{-,0}(t)}^2_{L^2}
			+ \norm{\partial_y^{-1} \psi_{-,0}(t)}^2_{L^2} 
			+\norm{\partial_y^{-2} \psi_{-,0}(t)}^2_{L^2}\nonumber \\
			&+ 4\pi \norm{\partial_y^{-1}\eta_{-,0}}^2_{L^2}
			+ 4\pi \norm{\partial_y^{-2}\eta_{-,0}}^2_{L^2} 
			+ \frac{1}{M^2}\norm{  \eta_{-,0}(t)  }^2_{L^2}  
			+\frac{1}{M^2}\norm{ \partial_y^{-1} \eta_{-,0}(t)  }^2_{L^2} \nonumber \\
			& 
			+ \norm{ \xkh{\omega_{+,0} +\eta_{+,0} -\nu M^2 \psi_{+,0} }(t)}^2_{L^2}
			\lesssim \mathcal{F}^0_{in}.
		\end{align}
		For $l\geq 1$, applying the above estimate and interpolation inequality as shown in ion case,
		\begin{align}
			\norm{\partial_y^l f}_{L^2}
			\lesssim \norm{\partial_y^{l+1}f}_{L^2}^{\frac{l}{l+1}}
			\norm{f}_{L^2}^{\frac{1}{l+1}},\nonumber
		\end{align}
		one has
		\begin{align}
			&\norm{\partial_y^{l+1}\psi_{-,0}}_{L^2}^2
			+\norm{\partial_y^l\psi_{-,0}}_{L^2}^2
			+\norm{\partial_y^{l-1}\psi_{-,0}}_{L^2}^2 \nonumber \\
			&+4\pi \norm{\partial_y^l\eta_{-,0}}_{L^2}^2
			+4\pi \norm{\partial_y^{l-1}\eta_{-,0}}_{L^2}^2
			+\frac{1}{M^2}\norm{\partial_y^{l+1}\eta_{-,0}}_{L^2}^2
			+\frac{1}{M^2}\norm{\partial_y^l\eta_{-,0}}_{L^2}^2 \nonumber \\
			&+\norm{ 		 \partial_y^{l+1}(\omega_{-,0}+\eta_{-,0}-\nu M^2\psi_{-,0}) }_{L^2}^2 \nonumber \\
			\gtrsim&
			\Bigg(
			\norm{\partial_y^{l}\psi_{-,0}}_{L^2}^2
			+\norm{\partial_y^{l-1}\psi_{-,0}}_{L^2}^2
			+\norm{\partial_y^{l-2}\psi_{-,0}}_{L^2}^2 \nonumber \\
			&+4\pi \norm{\partial_y^{l-1}\eta_{-,0}}_{L^2}^2
			+4\pi \norm{\partial_y^{l-2}\eta_{-,0}}_{L^2}^2
			+\frac{1}{M^2}\norm{\partial_y^{l}\eta_{-,0}}_{L^2}^2
			+\frac{1}{M^2}\norm{\partial_y^{l-1}\eta_{-,0}}_{L^2}^2 \nonumber \\
			&+\norm{ 		 \partial_y^{l}(\omega_{-,0}+\eta_{-,0}-\nu M^2\psi_{-,0}) }_{L^2}^2 
			\Bigg) ^{\frac{l+1}{l}}  (\mathcal{F}^0_{in})^{-\frac{1}{l}}\nonumber \\
			\geq& (\mathcal{F}^l(t))^{\frac{l+1}{l}}(\mathcal{F}^0_{in})^{-\frac{1}{l}} . \nonumber
		\end{align}
		Therefore, we have
		\begin{align}
			\frac{\mathrm{d}}{\mathrm{dt}}F^l(t)
			\leq -\nu C (F^l(t)) ^{\frac{l+1}{l}} (\mathcal{F}^0_{in})^{-\frac{1}{l}},
		\end{align}
		where $C$ is a constant depending on $l$. Applying Gr\"onwall's inequality, we obtain
		\begin{align}
			F^l(t)\leq\dfrac{F^l_{in}}{\xkh{\nu C (F^l_{in})^\frac{1}{l} (\mathcal{F}^0_{in})^{-\frac{1}{l}} t+1   }^l}.
		\end{align}
		Thus, we end the proof of Theorem \ref{theorem electron k=0}.
	\end{proof}

\section{\textbf{Analysis of nonzero modes}}
		In this section, we study the system (\ref{linear sys eta psi om 1})-(\ref{linear sys eta psi om 3}) and (\ref{linear elec sys eta psi om 1})-(\ref{linear elec sys eta psi om 3}) of other modes ($k\neq 0$). In view of Remark \ref{remark of ion 0 mode initial value} and Remark \ref{remark of electron 0 mode initial value}, it is reasonable to assume $(\eta_{\pm,0}^{in}, \psi_{\pm,0}^{in}, \omega_{\pm,0}^{in})=(0,0,0)$ in this section. We will use the analogous method in \cite{Antonelli 2021}, which proved the linear stability of Couette flow for 2D Navier-Stokes systems. 
	The energy functional in our paper is different to the one in \cite{Antonelli 2021}, due to the existence of the self-consistent electric field in NSP.
	
	Rewrite (\ref{linear sys eta psi om 1})-(\ref{linear sys eta psi om 3}) in the new frame in terms of ($\Pi, \Psi, \Gamma$) as follows
	\begin{align} 
		&\partial_{t} \Pi_{+} + \Psi_{+}=0,\\
		&\begin{aligned}[t]
			\partial_{t} \Psi_{+}  
			+   2\partial_{X}(\partial_{Y}-&t\partial_{X}) \Delta_{L}^{-1} \Psi_{+}
			+   2\partial_{XX}\Delta_{L}^{-1}\Gamma_{+}
			+   \frac{1}{M^2}\Delta_{L}\Pi_{+} \nonumber \\
			&=   -4\pi \Delta_{L}(-\Delta_{L}+4\pi M^2)^{-1}\Pi_{+}
			+\mu\Delta_{L}\Psi_{+} , 
	 	 \end{aligned} \\
		&\partial_{t}\Gamma_{+}-\Psi_{+}= \nu \Delta_{L} \Gamma_{+} ,
	\end{align}
	where $\mu := \lambda +\nu$.  Taking the Fourier transform of the above system , we have
	\begin{align}
		\label{equ 1 after Fouier}
		&\partial_{t} \widehat{\Pi}_{+}=- \widehat{\Psi}_{+} ,\\
		\label{equ 2 after Fourier}
		&\partial_{t} \widehat{\Psi}_{+}
		= \frac{\partial_{t} \alpha}{\alpha} \widehat{\Psi}_{+}
		- \mu \alpha \widehat{\Psi}_{+} 
		+ \frac{1}{M^2} \alpha \widehat{\Pi}_{+} 
		- \frac{2k^2}{\alpha} \widehat{\Gamma}_{+}
		+ 4 \pi \frac{\alpha}{\alpha+ 4 \pi M^2} \widehat{\Pi}_{+} , \\
		\label{equ 3 after Fourier}
		&\partial_{t} \widehat{\Gamma}_{+} - \widehat{\Psi}_{+} =-\nu \alpha 		\widehat{\Gamma}_{+} . 
	\end{align}
	Applying the same techniques to (\ref{linear elec sys eta psi om 1})-(\ref{linear elec sys eta psi om 3}), we get
	\begin{align}
		\label{electron Fourier equ1}
		&\partial_{t} \widehat{\Pi}_{-}=- \widehat{\Psi}_{-} , \\
		&\partial_{t} \widehat{\Psi}_{-}
		= \frac{\partial_{t} \alpha}{\alpha} \widehat{\Psi}_{-}
		- \mu \alpha \widehat{\Psi}_{-} 
		+ \frac{1}{M^2} \alpha \widehat{\Pi}_{-} 
		- \frac{2k^2}{\alpha} \widehat{\Gamma}_{-}
		+ 4 \pi\widehat{\Pi}_{-} , \\
		\label{electron Fourier equ3}
		&\partial_{t} \widehat{\Gamma}_{-} - \widehat{\Psi}_{-} =-\nu \alpha 		\widehat{\Gamma}_{-} . 
	\end{align}
	By introducing parameter $\delta \in \{ 0, 1\}$, the above two systems can be represented into the following unified from,
	\begin{align}
		\label{unified form equ 1}
		&\partial_{t} \widehat{\Pi}=- \widehat{\Psi} ,\\
		\label{unified form equ 2}
		&\partial_{t} \widehat{\Psi}
		= \frac{\partial_{t} \alpha}{\alpha} \widehat{\Psi}
		- \mu \alpha \widehat{\Psi} 
		+ \frac{1}{M^2} \alpha \widehat{\Pi} 
		- \frac{2k^2}{\alpha} \widehat{\Gamma}
		+ 4 \pi \frac{\alpha}{\alpha+ 4 \pi M^2 \delta} \widehat{\Pi} , \\
		\label{unified form equ 3}
		&\partial_{t} \widehat{\Gamma} - \widehat{\Psi} =-\nu \alpha 		\widehat{\Gamma} ,
	\end{align}
	with $\delta=1$ corresponding to ion system, i.e.  $(\Pi,\Psi,\Gamma)=(\Pi_{+},\Psi_{+},\Gamma_{+})$, and $\delta=0$ corresponding to electron system, i.e. $(\Pi,\Psi,\Gamma)=(\Pi_{-},\Psi_{-},\Gamma_{-})$.

	\subsection{Dissipation with loss of derivatives}
	\ 
	
	In this subsection, we will prove Theorem \ref{th1}, based on the following two key estimates.
	\begin{proposition} \label{proposition for th 1}
		Let $s\geq0$, $0<\mu \leq \frac{1}{2}$, $M>0$ be such that $M\leq\mathrm{min}\{\mu^{-1},\lambda^{-\frac{1}{2}},\nu^{-\frac{1}{3}}\}$. If $\eta^{in}_{\pm}\in  H^{s+1}(\mathbb{T}\times\mathbb{R})$ , $\psi^{in}_{\pm},\omega^{in}_{\pm} \in  H^s(\mathbb{T}\times\mathbb{R})$ and $\eta^{in}_{\pm,0}=\psi^{in}_{\pm,0}=\omega^{in}_{\pm,0}=0$, then
		\begin{align}
			\frac{1}{M}\norm{\xkh{\alpha^{-\frac{1}{4}}\widehat{\Pi}_{\pm}}(t)}_{H^s}&+\norm{\xkh{\alpha^{-\frac{3}{4}}\widehat{\Psi}_{\pm}}(t)}_{H^s}\nonumber \\
			&+\norm{\xkh{\alpha^{-\frac{3}{4}}(\widehat{F}_{\pm}-\nu M^2\widehat{\Psi}_{\pm})}(t)}_{H^s} 
			\lesssim \mathrm{e}^{-\frac{1}{32}\nu^\frac{1}{3}t} C_{in,s}^{\pm}.
		\end{align}
	\end{proposition}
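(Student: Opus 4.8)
The plan is to run a weighted energy estimate directly on the Fourier-side system \eqref{unified form equ 1}--\eqref{unified form equ 3}, replacing the vorticity by the quantity $\widehat{G}:=\widehat{F}-\nu M^2\widehat{\Psi}=\widehat{\Pi}+\widehat{\Gamma}-\nu M^2\widehat{\Psi}$, exactly as in the $k=0$ analysis. First I would record the evolution of $\widehat{G}$: combining \eqref{unified form equ 1} and \eqref{unified form equ 3} gives $\partial_t\widehat{F}=-\nu\alpha\widehat{\Gamma}=-\nu\alpha\widehat{F}+\nu\alpha\widehat{\Pi}$, and subtracting $\nu M^2\partial_t\widehat{\Psi}$, using $\mu=\nu+\lambda$, the $\nu\alpha\widehat{\Pi}$ terms cancel and leave
\[
\partial_t\widehat{G}=-\nu\alpha\widehat{G}+\nu\lambda M^2\alpha\widehat{\Psi}-\nu M^2\tfrac{\partial_t\alpha}{\alpha}\widehat{\Psi}+\nu M^2\tfrac{2k^2}{\alpha}\widehat{\Gamma}-\nu M^2\,4\pi\tfrac{\alpha}{\alpha+4\pi M^2\delta}\widehat{\Pi}.
\]
This displays the clean dissipation $-\nu\alpha\widehat{G}$ together with source terms all carrying a factor $\nu M^2$, which the smallness hypotheses will render harmless.

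Next I would introduce the symmetrizing variables dictated by the target norm, $P=\tfrac1M\alpha^{-1/4}\widehat{\Pi}$, $Q=\alpha^{-3/4}\widehat{\Psi}$, $R=\alpha^{-3/4}\widehat{G}$. The powers $-\tfrac14$ and $-\tfrac34$ are forced by the symmetrization of the inviscid $(\Pi,\Psi)$ subsystem of \cite{Pu and Zhou and Bian}: with weights $\alpha^a\widehat{\Pi},\alpha^b\widehat{\Psi}$ the off-diagonal coupling is anti-symmetric precisely when $a-b=\tfrac12$, in which case it equals the oscillation frequency $\tfrac1M\alpha^{1/2}$. The energy functional would then be
\[
E(t)=\sum_{k\neq0}\int_{\mathbb{R}}\lan{k,\xi}^{2s}\xkh{\abs{P}^2+\abs{Q}^2+\abs{R}^2+(\text{cross and modifying terms})}\,d\xi,
\]
where, as in Theorems \ref{theorem k=0 ion}--\ref{theorem electron k=0}, a cross term of size $\mathcal{O}(\mu)$ coupling $P$ and $Q$ creates decay, and a further modifying term (discussed below) copes with the Poisson coupling; the conditions $\mu\le\tfrac12$, $M^2\mu\le1$ guarantee the coercivity $\tfrac14\mathcal{E}\le E\le\mathcal{E}$.

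I would then differentiate $E$ and organize. The time derivatives of the multipliers $\alpha^{a}$ combine with the symmetrization term $\tfrac{\partial_t\alpha}{\alpha}\widehat{\Psi}$ in \eqref{unified form equ 2} to produce the inviscid-damping contributions, while the viscous terms yield the favorable dissipation $-c\mu\,\alpha^{-1/2}\abs{\widehat{\Psi}}^2$ and $-c\nu\,\alpha^{-1/2}\abs{\widehat{G}}^2$. The $\Gamma$-coupling $-\tfrac{2k^2}{\alpha}\widehat{\Gamma}$ and the $\nu M^2$-source terms in the $R$-equation are absorbed into these dissipations by Cauchy--Schwarz and the hypotheses $\tfrac{16\nu M^2}{\mu}\le1$, $\lambda M^2\le1$, exactly as in the proof of Theorem \ref{theorem k=0 ion}. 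The rate $\nu^{1/3}$ is built in through the choice of cross terms, reflecting that for $k\neq0$ one has $\alpha\ge k^2\ge1$: away from the critical time $t\approx\xi/k$ the viscous dissipation $\nu\alpha$ already exceeds $\nu^{1/3}$, whereas in the critical layer $\dkh{\abs{\xi-kt}\lesssim\nu^{-1/3}}$, of temporal width $\lesssim\nu^{-1/3}$, the condition $M\le\nu^{-1/3}$ keeps the symmetrized oscillation $\tfrac1M\alpha^{1/2}\ge\nu^{1/3}$ large enough to redistribute the $\Psi$- and $G$-dissipation onto the otherwise undamped $\Pi$. The upshot is a differential inequality $\tfrac{d}{dt}E\le-\tfrac{\nu^{1/3}}{16}E$.

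\textbf{Main obstacle.} The genuinely delicate point is the self-consistent electric field. Passing the Poisson term $4\pi\tfrac{\alpha}{\alpha+4\pi M^2\delta}\widehat{\Pi}$ of \eqref{unified form equ 2} through the multiplier $\alpha^{-3/4}$ produces, in $\tfrac{d}{dt}\abs{Q}^2$, a cross term with coefficient $\tfrac{4\pi M\alpha^{1/2}}{\alpha+4\pi M^2\delta}$. This term carries no viscosity, hence no smallness to spend, and its coefficient behaves like $\alpha^{-1/2}\sim(\abs{k}t)^{-1}$, which is not integrable in time, so it cannot be closed by Gr\"onwall as it stands. The plan, following the strategy announced in the introduction, is to append to $E$ a modifying term built from the Poisson quantity $(\alpha+4\pi M^2\delta)^{-1}\abs{\widehat{\Pi}}^2$; differentiating it and using $\partial_t\widehat{\Pi}=-\widehat{\Psi}$ generates a cross term of opposite sign tuned to cancel the bad term, while the conditions $M^2\le\tfrac{\mu}{16\nu}$ and $M^2\le(\tfrac{\mu}{32\pi\nu})^{1/2}$ ensure that this modification preserves the coercivity of $E$. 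Once the bad term is removed, Gr\"onwall's inequality gives $E(t)\le e^{-\nu^{1/3}t/16}E(0)$; taking square roots, observing that the three $L^2$-norms in the statement are the individual pieces of $E^{1/2}$, and bounding $E(0)^{1/2}\lesssim C_{in,s}^{\pm}$ (the $H^{s+1}$ regularity of $\Pi^{in}$ absorbing the $\tfrac1M\alpha^{-1/4}$ weight) completes the argument.
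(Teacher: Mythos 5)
Your overall architecture matches the paper's: the same symmetrized variables $\tfrac1M\alpha^{-1/4}\widehat{\Pi}$, $\alpha^{-3/4}\widehat{\Psi}$, $\alpha^{-3/4}(\widehat{F}-\nu M^2\widehat{\Psi})$, the same cross terms, and, crucially, the same new idea for the NSP system — adding $M^2\tfrac{4\pi}{\alpha+4\pi M^2\delta}\abs{Z_1}^2$ to the energy so that its time derivative cancels the non-integrable, viscosity-free Poisson cross term. Your derivation of the equation for $\widehat{F}-\nu M^2\widehat{\Psi}$ is also correct. However, there are two genuine gaps.

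First, you omit the ghost multiplier $m(t,k,\xi)=\mathrm{e}^{2\arctan(\nu^{1/3}(t-\xi/k))}$, which the paper builds into every $Z_i$ and which supplies the identity $\nu\alpha+\tfrac{\partial_t m}{m}\geq\nu^{1/3}$. This is not cosmetic: near the critical time $t\approx\xi/k$ one has $\alpha=k^2$, so the viscous dissipation available to $\widehat{\Psi}$ and to $\widehat{F}-\nu M^2\widehat{\Psi}$ is only $\mu k^2$ and $\nu k^2$, which can be of order $\nu\ll\nu^{1/3}$. Meanwhile the cross term $-2\gamma\alpha^{-1/2}\Re(\bar Z_1Z_2)$ with $\gamma=\tfrac{M\nu^{1/3}}{4}$, which you correctly invoke to damp the otherwise undamped $\Pi$, produces in the energy identity the \emph{positive} term $+\tfrac{\gamma}{M}\abs{Z_2}^2=+\tfrac{\nu^{1/3}}{4}\abs{Z_2}^2$ ($\mathcal{D}_1$ in the paper), which must be absorbed; without $\tfrac{\partial_t m}{m}$ there is nothing of size $\nu^{1/3}$ to absorb it in the critical layer. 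Your suggested mechanism — that the oscillation $\tfrac1M\alpha^{1/2}$ "redistributes" dissipation — does not act in the energy identity, since the symmetrized coupling is antisymmetric and cancels exactly. So the differential inequality $\tfrac{d}{dt}E\leq-\tfrac{\nu^{1/3}}{16}E$ does not follow from your functional as stated.

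Second, you do not address the term $-\tfrac34\tfrac{\partial_t\alpha}{\alpha}\abs{Z_3}^2$ produced by differentiating the weight $\alpha^{-3/4}$ on $\widehat{F}-\nu M^2\widehat{\Psi}$. For $t<\xi/k$ this is a growth term whose coefficient is not integrable in time, and there is no matching structure (unlike for $Z_1,Z_2$, where the paper's mixed term $\tfrac{M}{4}\tfrac{\partial_t\alpha}{\alpha^{3/2}}\Re(\bar Z_1Z_2)$ is engineered to cancel the analogous contributions). The paper handles it by the cut-off $\chi_{t<\xi/k}$ inside Gr\"onwall, which yields the factor $(\alpha(0)/\alpha(t))^{3/2}\leq\lan{k,\xi}^3$ and is precisely the origin of the derivative loss — i.e.\ of the $H^{s+1}$ norm of $\Pi^{in}$ in $C^{\pm}_{in,s}$. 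Your clean Gr\"onwall bound $E(t)\leq\mathrm{e}^{-\nu^{1/3}t/16}E(0)$ silently discards this term, and your explanation of the $H^{s+1}$ requirement (that it "absorbs the $\alpha^{-1/4}$ weight") is backwards: at $t=0$ the weight $\alpha^{-1/4}\leq1$ costs nothing; the extra derivative is needed to pay for the $\lan{k,\xi}^3$ growth factor, of which only $\alpha(0)^{-1/2}$ is recovered from the $Z_1$ weight. A minor further point: the constraints $M^2\leq\tfrac{\mu}{16\nu}$ and $M^2\leq(\tfrac{\mu}{32\pi\nu})^{1/2}$ belong to the $k=0$ analysis, not to this proposition, whose only hypotheses on $M$ are $M\leq\min\{\mu^{-1},\lambda^{-1/2},\nu^{-1/3}\}$.
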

	\begin{corollary} \label{corollary for th 1}
		Let $s\geq0$, $0<\mu \leq \frac{1}{2}$, $M>0$ be such that $M\leq\mathrm{min}\{\mu^{-1},\lambda^{-\frac{1}{2}},\nu^{-\frac{1}{3}}\}$. If $\eta^{in}_{\pm}\in H^{s+\frac{7}{2}}(\mathbb{T}\times\mathbb{R})$, $\psi^{in}_{\pm}, \omega^{in}_{\pm} \in H^{s+\frac{5}{2}}(\mathbb{T}\times\mathbb{R})$ and $\eta^{in}_{\pm,0}=\psi^{in}_{\pm,0}=\omega^{in}_{\pm,0}=0$, then
		\begin{align}
			\norm{\widehat{\Gamma}_{\pm}(t)}_{H^s} \lesssim&\:M \lan{t}^\frac{1}{2} \mathrm{e}^{-\frac{1}{32}\nu^\frac{1}{3}t} C^{\pm}_{in,s+\frac{1}{2}} + 
			M \lan{t}^\frac{1}{2} \mathrm{e}^{-\frac{1}{64}\nu^\frac{1}{3}t} C^{\pm}_{in,s+\frac{5}{2}} \nonumber \\
			&+\mathrm{e}^{-\frac{1}{12}\nu^\frac{1}{3}t} \norm{\omega^{in}_{\pm}+\eta^{in}_{\pm}}_{H^s}.
		\end{align}
	\end{corollary}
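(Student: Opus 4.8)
The plan is to read off $\widehat{\Gamma}=\widehat{F}-\widehat{\Pi}$ and to exploit the favourable equation satisfied by $\widehat F$. Adding \eqref{unified form equ 1} and \eqref{unified form equ 3} gives, after using $\widehat\Gamma=\widehat F-\widehat\Pi$,
\begin{align}
\partial_t\widehat F=-\nu\alpha\widehat F+\nu\alpha\widehat\Pi,\nonumber
\end{align}
so that at fixed $(k,\xi)$ the quantity $\widehat F$ solves a scalar linear ODE in $t$ with integrating factor $\mathrm{e}^{-\nu\int_s^t\alpha\,d\tau}$ and initial datum $\widehat F(0)$, whose inverse transform is $\eta^{in}_{\pm}+\omega^{in}_{\pm}$. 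Duhamel's formula then reads
\begin{align}
\widehat F(t)=\mathrm{e}^{-\nu\int_0^t\alpha\,d\tau}\,\widehat F(0)+\int_0^t\mathrm{e}^{-\nu\int_s^t\alpha\,d\tau}\,\nu\alpha(s)\,\widehat\Pi(s)\,ds.\nonumber
\end{align}
Writing $\widehat\Gamma=\widehat F-\widehat\Pi$ splits $\norm{\widehat\Gamma(t)}_{H^s}$ into three pieces — the term $\widehat\Pi(t)$, the homogeneous part of $\widehat F$, and the Duhamel integral — which I will match, via the triangle inequality, to the three terms on the right-hand side of the Corollary.

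The first two pieces are short. For $\widehat\Pi(t)$ I would convert the weighted estimate of Proposition \ref{proposition for th 1}: since $\alpha=k^2+(\xi-kt)^2\lesssim\lan{k,\xi}^2\lan{t}^2$ for $k\neq0$, one has $\alpha^{1/4}\lesssim\lan{k,\xi}^{1/2}\lan{t}^{1/2}$, whence
\begin{align}
\norm{\widehat\Pi(t)}_{H^s}=\norm{\alpha^{1/4}\xkh{\alpha^{-1/4}\widehat\Pi}(t)}_{H^s}\lesssim\lan{t}^{1/2}\norm{\xkh{\alpha^{-1/4}\widehat\Pi}(t)}_{H^{s+1/2}}\lesssim M\lan{t}^{1/2}\mathrm{e}^{-\frac{1}{32}\nu^{1/3}t}C^{\pm}_{in,s+1/2},\nonumber
\end{align}
which is the first term. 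For the homogeneous part I would invoke the enhanced–dissipation lower bound $\int_s^t\alpha\,d\tau\geq\tfrac{1}{12}(t-s)^3$, valid for $k\neq0$ because $\alpha(\tau)\geq k^2(\tau-\xi/k)^2$ and $\int_s^t(\tau-c)^2\,d\tau$ is minimised at the midpoint $c=(s+t)/2$, together with the elementary inequality $\mathrm{e}^{-\frac{\nu}{12}r^3}\lesssim\mathrm{e}^{-\frac{1}{12}\nu^{1/3}r}$. Taking $s=0$ yields $\mathrm{e}^{-\nu\int_0^t\alpha}\lesssim\mathrm{e}^{-\frac{1}{12}\nu^{1/3}t}$, hence the third term $\mathrm{e}^{-\frac{1}{12}\nu^{1/3}t}\norm{\omega^{in}_{\pm}+\eta^{in}_{\pm}}_{H^s}$.

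The heart of the matter — and the step I expect to be the main obstacle — is the Duhamel integral. The crucial structural fact is the unit–mass identity $\nu\alpha(s)\mathrm{e}^{-\nu\int_s^t\alpha\,d\tau}=\partial_s\,\mathrm{e}^{-\nu\int_s^t\alpha\,d\tau}$, giving $\int_0^t\nu\alpha(s)\mathrm{e}^{-\nu\int_s^t\alpha}\,ds\leq1$; this is what prevents the appearance of an inverse power of $\nu$ (a crude bound such as $\int_0^t\mathrm{e}^{-\frac{\nu}{12}(t-s)^3}\,ds\lesssim\nu^{-1/3}$ would be fatal). The difficulty is that, although this kernel has unit mass, it concentrates at high frequencies as $s\to t$ — its pointwise supremum in $(k,\xi)$ is of order $(t-s)^{-1}$ — so one cannot simply bound its multiplier norm. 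I would therefore split the time integral at $s=t/2$ and split the exponent $\mathrm{e}^{-\nu\int_s^t\alpha}$, retaining the factor $\nu\alpha(s)$ for the mass bound and using the remainder for decay. On $s\geq t/2$ the forcing decay $\mathrm{e}^{-\frac{1}{32}\nu^{1/3}s}\leq\mathrm{e}^{-\frac{1}{64}\nu^{1/3}t}$ supplies the rate — this halving is precisely the origin of the degraded exponent $\tfrac{1}{64}$ — while on $s\leq t/2$ the dissipation kernel $\mathrm{e}^{-\nu\int_{t/2}^t\alpha}\lesssim\mathrm{e}^{-c\nu^{1/3}t}$ does.

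The delicate bookkeeping then concerns the $\alpha$–weights carried by $\nu\alpha(s)\widehat\Pi(s)=\nu\alpha(s)^{5/4}\xkh{\alpha^{-1/4}\widehat\Pi(s)}$: converting the surplus powers through $\alpha\lesssim\lan{k,\xi}^2\lan{s}^2$ and $\alpha^{1/4}\lesssim\lan{k,\xi}^{1/2}\lan{s}^{1/2}$ trades frequency concentration for a loss of $5/2$ derivatives, while the spare powers of $\lan{s}$ are absorbed into the exponential decay, leaving a single factor $\lan{t}^{1/2}$. Balancing the unit mass against this derivative loss is exactly where care is required — using too little mass reintroduces a negative power of $\nu$, using too much loses the decay — and the outcome is the second term $M\lan{t}^{1/2}\mathrm{e}^{-\frac{1}{64}\nu^{1/3}t}C^{\pm}_{in,s+5/2}$. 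Summing the three contributions by the triangle inequality gives the Corollary.
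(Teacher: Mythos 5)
Your overall architecture coincides with the paper's: both write $\widehat\Gamma=\widehat F-\widehat\Pi$, derive $\partial_t\widehat F=-\nu\alpha\widehat F+\nu\alpha\widehat\Pi$, apply Duhamel with the semigroup $\mathrm{e}^{\mathcal{L}_\nu(t)}$, and split into the same three pieces. Your treatment of the piece $\widehat\Pi(t)$ (via $\alpha^{1/4}\lesssim\lan{k,\xi}^{1/2}\lan{t}^{1/2}$ and Proposition \ref{proposition for th 1}) and of the homogeneous piece (via $\int_s^t\alpha\,d\tau\geq\frac{1}{12}(t-s)^3$ and $\mathrm{e}^{-\frac{\nu}{12}r^3}\lesssim\mathrm{e}^{-\frac{1}{12}\nu^{1/3}r}$) is exactly the paper's argument and is correct.

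The divergence, and the gap, is in the Duhamel integral. You reject the "crude" route on the grounds that a bound like $\int_0^t\mathrm{e}^{-\frac{\nu}{12}(t-s)^3}\,ds\lesssim\nu^{-1/3}$ would be "fatal", and instead propose a unit-mass argument based on $\nu\alpha(s)\mathrm{e}^{-\nu\int_s^t\alpha\,d\tau}=\partial_s\,\mathrm{e}^{-\nu\int_s^t\alpha\,d\tau}$. But that identity is pointwise in $(k,\xi)$, and — as you yourself note — the kernel's multiplier norm blows up like $(t-s)^{-1}$ as $s\to t$; since $\sup_s$ and the $L^2_{k,\xi}$ norm do not commute in the direction you need, the "mass bound" cannot be married to the $H^{s}$ estimates of Proposition \ref{proposition for th 1} without a further argument that you only gesture at. The second term of the Corollary is therefore asserted rather than derived. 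Meanwhile, the crude route you dismiss is precisely what the paper does, and it is not fatal: one bounds $\norm{\alpha(\tau)\widehat\Pi(\tau)}_{H^s}\lesssim M\lan{\tau}^{5/2}\mathrm{e}^{-\frac{1}{32}\nu^{1/3}\tau}C^{\pm}_{in,s+5/2}$ using $\alpha\lesssim\lan{\tau}^2\lan{k,\xi}^2$ (this is the sole source of the $5/2$ derivative loss), and then
\begin{align}
\nu\int_0^t\mathrm{e}^{-\frac{1}{12}\nu^{1/3}(t-\tau)}\lan{\tau}^{5/2}\mathrm{e}^{-\frac{1}{32}\nu^{1/3}\tau}\,d\tau
\lesssim \nu^{2/3}\lan{t}^{5/2}\mathrm{e}^{-\frac{1}{32}\nu^{1/3}t}
\lesssim \lan{t}^{1/2}\mathrm{e}^{-\frac{1}{64}\nu^{1/3}t},\nonumber
\end{align}
because the explicit $\nu$ from Duhamel pays for the $\nu^{-1/3}$ of the convolution, and the leftover $\nu^{2/3}$ absorbs $\lan{t}^2\mathrm{e}^{-\frac{1}{64}\nu^{1/3}t}\lesssim\nu^{-2/3}$. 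No splitting at $t/2$ and no unit-mass structure are needed. To complete your proof you should either carry out this elementary convolution estimate or supply the missing interchange argument for your kernel-mass approach.
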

	\begin{remark}
		There are loss of derivatives in Proposition \ref{proposition for th 1} and Corollary \ref{corollary for th 1}. We will analyze the reason in Remark \ref{remark on loss of derivative}.
	\end{remark}
	
	In the following, we first prove Theorem (\ref{th1}) utilizing Proposition (\ref{proposition for th 1}) and Corollary (\ref{corollary for th 1}), and the proof of the proposition and the corollary are postponed to the end of this subsection.
	\begin{proof}
		(Proof of Theorem \ref{th1}) From the Helmholtz decomposition, we have
		\begin{align}
			&\norm{\xkh{\mathbb{Q}[u_{\pm}]}(t)}_{L^2}^2+\frac{1}{M^2}\norm{\eta_{\pm}(t)}_{L^2}^2 \nonumber \\
			=&\norm{\nabla \Delta^{-1} \psi_{\pm}(t) }_{L^2}^2+\frac{1}{M^2}\norm{\eta_{\pm}(t)}_{L^2}^2 \nonumber \\
			=&\norm{(-\Delta_L)^{-\frac{1}{2}}\Psi_{\pm}(t)}_{L^2}^2+\frac{1}{M^2}\norm{\Pi_{\pm}(t)}_{L^2}^2 \nonumber
		\end{align}
		Apply the Plancherel's Theorem and the fact that $\alpha(t,k,\xi) \lesssim \lan{t}^2 \lan{k,\xi}^2$, we obtain
		\begin{align}
			&\norm{\xkh{\mathbb{Q}[u_{\pm}]}(t)}_{L^2}^2+\frac{1}{M^2}\norm{\eta_{\pm}(t)}_{L^2}^2 \nonumber \\
			\lesssim &\lan{t} \xkh{ \norm{\xkh{\alpha^{-\frac{3}{4}}\widehat{\Psi}_{\pm}}(t)}^2_{H^\frac{1}{2}} +\frac{1}{M^2} \norm{\xkh{\alpha^{-\frac{1}{4}}\widehat{\Pi}_{\pm}}(t)}^2_{H^\frac{1}{2}}  } \nonumber \\
			\lesssim &\lan{t} \mathrm{e}^{-\frac{1}{16}\nu^\frac{1}{3}t} \xkh{C^{\pm}_{in,\frac{1}{2}}} ^2 , \nonumber 
		\end{align}
		where we have used Proposition \ref{proposition for th 1} in the last line, hence proving (\ref{conclusion1 in th 1}). By using the Helmholtz decomposition again, we get
		\begin{align}
			\norm{\mathbb{P}[u_{\pm}]^x(t)}_{L^2}=\norm{-\partial_y\Delta^{-1}\omega_{\pm}(t)}_{L^2}
			\leq \norm{\xkh{\frac{1}{\alpha^\frac{1}{2}}\widehat{\Gamma}_{\pm}}(t)}_{L^2}. \nonumber
		\end{align}
		For $k \neq 0$, applying $\lan{k,t} \lesssim \lan{\frac{\xi}{k}-t} \lan{k,\frac{\xi}{k}}$ , we get
		\begin{align}
			\frac{1}{\alpha}
			\lesssim  \frac{\lan{k,\xi}^2}{\lan{t}^2}.\nonumber
		\end{align}
		For $k=0$, we have assumed $\omega^{in}_{+,0}=0$. Thus, combining the above inequality with Corollary (\ref{corollary for th 1}), we deduce
		\begin{align}
			&\norm{\mathbb{P}[u_{\pm}]^x(t)}_{L^2} \lesssim \norm{\frac{\lan{k,\xi}}{\lan{t}}\widehat{\Gamma}_{\pm}(t)}_{L^2} =\frac{1}{\lan{t}}\norm{\widehat{\Gamma}_{\pm}(t)}_{H^1} \nonumber \\
			&\lesssim \frac{1}{\lan{t}} \xkh{  M \lan{t}^\frac{1}{2} \mathrm{e}^{-\frac{1}{32}\nu^\frac{1}{3}t} C^{\pm}_{in,\frac{3}{2}} + 
				M \lan{t}^\frac{1}{2} \mathrm{e}^{-\frac{1}{64}\nu^\frac{1}{3}t} C^{\pm}_{in,\frac{7}{2}} +\mathrm{e}^{-\frac{1}{12}\nu^\frac{1}{3}t} \norm{\omega^{in}_{\pm}+\eta^{in}_{\pm}}_{H^1}   } \nonumber \\
			&\lesssim M\frac{\mathrm{e}^{-\frac{1}{64}\nu^\frac{1}{3}t}}{\lan{t}^\frac{1}{2}} C^{\pm}_{in}
			+ \frac{\mathrm{e}^{-\frac{1}{12}\nu^\frac{1}{3}t}}{\lan{t}}\norm{\omega^{in}_{\pm}+\eta^{in}_{\pm}}_{H^1}. \nonumber
		\end{align}
		Applying $\lan{k,t} \lesssim \lan{\frac{\xi}{k}-t} \lan{k,\frac{\xi}{k}}$ again, we have
		\begin{align}
			\frac{\abs{k}}{\alpha}\lesssim \frac{\lan{k,\xi}^2}{\lan{t}^2} ,\quad \forall k \in \mathbb{Z}. \nonumber
		\end{align}
		Hence, applying Corollary \ref{corollary for th 1} again, we obtain
		\begin{align}
			&\norm{\mathbb{P}[u_{\pm}]^y(t)}_{L^2}= \norm{\partial_x\Delta^{-1}\omega_{\pm}(t)}_{L^2}=\norm{\partial_{X}\Delta_{L}^{-1}\Gamma_{\pm}(t)}_{L^2}  		
			=\norm{\frac{k}{\alpha} \widehat{\Gamma}_{\pm}(t)}_{L^2} \lesssim \frac{1}{\lan{t}^2} \norm{\widehat{\Gamma}_{\pm}(t)}_{H^2} \nonumber \\
			&\lesssim \frac{1}{\lan{t}^2} \xkh{ M \lan{t}^\frac{1}{2} \mathrm{e}^{-\frac{1}{32}\nu^\frac{1}{3}t} C^{\pm}_{in,\frac{5}{2}} + 
				M \lan{t}^\frac{1}{2} \mathrm{e}^{-\frac{1}{64}\nu^\frac{1}{3}t} C^{\pm}_{in,\frac{9}{2}} +\mathrm{e}^{-\frac{1}{12}\nu^\frac{1}{3}t} \norm{\omega^{in}_{\pm}+\eta^{in}_{\pm}}_{H^2} } \nonumber \\
			&\leq M \frac{\mathrm{e}^{-\frac{1}{64}\nu^\frac{1}{3}t}}{\lan{t}^\frac{3}{2}}C^{\pm}_{in}+\frac{\mathrm{e}^{-\frac{1}{12}\nu^\frac{1}{3}t}}{\lan{t}^2} \norm{\omega^{in}_{\pm}+\eta^{in}_{\pm}}_{H^2} . \nonumber
		\end{align}
		We conclude the proof of Theorem \ref{th1}.
	\end{proof}
	In order to prove Proposition \ref{proposition for th 1}, we need to define a suitable weighted energy functional and the proof of Proposition \ref{proposition for th 1} follows from the bounds on this energy functional.
	
	Introduce the following Fourier multiplier
	\begin{align}
		\partial_{t} m(t,k,\xi)&=\frac{2\nu^\frac{1}{3}}{\nu^\frac{2}{3}\xkh{\frac{\xi}{k}-t}^2+1} m(t,k,\xi), \nonumber \\
		m(0,k,\xi)&=\mathrm{e}^{2\mathrm{arctan}\xkh{-\nu^\frac{1}{3}\frac{\xi}{k}}}, \nonumber
	\end{align}
	which has been used in \cite{Antonelli 2021} and is explicitly given by
	\begin{align}
		\label{def of m}
		m(t,k,\xi)=\mathrm{e}^{2\mathrm{arctan}\xkh{\nu^\frac{1}{3}\xkh{t-\frac{\xi}{k}}}},
	\end{align}
	and satisfies the following crucial property
	\begin{align}
		\label{property about m}
		\nu \alpha (t,k,\xi)+\frac{\partial_{t}m}{m}(t,k,\xi)\geq \nu^\frac{1}{3},\quad \forall \: t\geq0,\: k\in \mathbb{Z} \backslash \{0\},\:\xi \in \mathbb{R}.
	\end{align}
	It's clear that $m$ and $m^{-1}$ are both bounded Fourier multiplier, therefore they generate equivalent norm to the standard $L^2$ norm.
	
	Let $s\geq 0$, and $\Pi, \Psi, \Gamma$ satisfy (\ref{unified form equ 1})-(\ref{unified form equ 3}). We define the following weighted variables
	\begin{align}
		&Z_1(t)=\frac{1}{M} \langle k,\xi \rangle ^s \xkh{m^{-1} \alpha^{-\frac{1}{4} }\widehat{\Pi}}(t) ,  \quad
		Z_2(t)=\langle k,\xi \rangle ^s  \xkh{m^{-1} \alpha^{-\frac{3}{4} }\widehat{\Psi}}(t) , \\
		&Z_3(t)=\langle k,\xi \rangle ^s \xkh{m^{-1} \alpha^{-\frac{3}{4} }(\widehat{F}-\nu M^2\widehat{\Psi})} (t) ,
	\end{align}
	where $F=\Pi+\Gamma$. 
	\begin{remark}
		\label{remark on definition of Z_1 Z_2 Z_3}
		Here, $m$ is lower and upper bounded multiplier. $\alpha^{-\frac{1}{4}}$ for $Z_1$ and $\alpha^{-\frac{3}{4}}$ for $Z_2$ are used in \cite{Pu and Zhou and Bian} to symmetrize the Euler-Poisson system. For technical reason, we choose $\alpha^{-\frac{3}{4}}$ for $Z_3$. In detail, the equation satisfied by ($\widehat{F}-\nu M^2 \widehat{\Psi}$) is  
		\begin{align}
			\label{equation satisfied by F- nu M^2 Psi}
			\partial_{t}\xkh{\widehat{F}-\nu M^2 \widehat{\Psi}}
			=&-\nu \alpha \xkh{\widehat{F}-\nu M^2 \widehat{\Psi}}
			+ \nu (\mu-\nu) M^2 \alpha \widehat{\Psi}
			-\nu M^2 \frac{\partial_{t}\alpha}{\alpha} \widehat{\Psi} \nonumber \\
			&+2\nu M^2\frac{k^2}{\alpha}\xkh{ \widehat{F}-\nu M^2 \widehat{\Psi}  }
			-2\nu M^2 \frac{k^2}{\alpha} \widehat{\Pi} \nonumber \\
			&+2\nu^2 M^4 \frac{k^2}{\alpha} \widehat{\Psi}
			-\nu M^2  \frac{4\pi \alpha}{\alpha+4\pi M^2 \delta} \widehat{\Pi}
		\end{align}
		Time derivative of $\abs{\alpha^j \xkh{\widehat{F}-\nu M^2 \widehat{\Psi}} }^2$ formally generates the expression 
		\begin{align}
			-\nu \alpha \abs{\alpha^j \xkh{\widehat{F}-\nu M^2 \widehat{\Psi}} }^2
			+ \nu (\mu-\nu) M^2 \alpha^{j+1+\frac{3}{4}} \mathrm{Re} \xkh{ \alpha^{-\frac{3}{4}}\Psi \alpha^j \overline{ \xkh{  \widehat{F}-\nu M^2 \widehat{\Psi} } }}. \nonumber
		\end{align}
		In order to use the first term to control the second term, it is required that $j \leq -\frac{3}{4}$. For this reason, we choose $\alpha^{-\frac{3}{4}}$ for $Z_3$.
	\end{remark}

	Let $0< \gamma:=\gamma(M,\nu)\leq \frac{1}{4}$ be a parameter which will be chosen subsequently and consider the following energy functional
	\begin{align}
		E_\delta(t)=\frac{1}{2} \bigg(  &\left(   1+ M^2\frac{(\partial_{t}\alpha)^2}{\alpha^3} + M^2\frac{4\pi}{\alpha+4\pi M^2\delta} \right) \vert Z_1 \vert ^2 +\vert Z_2 \vert ^2   +\vert Z_3 \vert ^2 \nonumber \\
		\label{mixed term in energy}
		&+\frac{M}{2}\frac{\partial_{t}\alpha}{\alpha ^\frac{3}{2}} \mathrm{Re}(\bar{Z}_1Z_2)  - 2\gamma\alpha^{-\frac{1}{2} } \mathrm{Re}(\bar{Z}_1Z_2)   \bigg)  (t) . \nonumber
	\end{align}
	Here, $E_{\delta=1}(t)$, $E_{\delta=0}(t)$ are energy functional of ion system and electron system, respectly. It is easily obtained that
	$E_\delta(t)$ is coercive, namely
	\begin{align}
		&\abs{ E_\delta(t) }\leq \xkh{\bigg( 1+M^2\frac{(\partial_{t}\alpha)^2}{\alpha^3} + 	M^2\frac{4\pi}{\alpha+4\pi M^2 \delta}     \bigg) \vert Z_1\vert^2 +\vert Z_2\vert^2 +\vert Z_3\vert^2   }(t) ,\\
		\label{coercive 2}
		&\abs{ E_\delta(t) }\geq \frac{1}{4} \bigg(   \left( 	1+M^2\frac{(\partial_{t}\alpha)^2}{\alpha^3} + M^2\frac{4\pi}{\alpha+4\pi M^2 \delta} \right)  \vert Z_1\vert^2 +\vert Z_2\vert^2 +2\vert Z_3\vert^2  \bigg) (t) .
	\end{align}
	Since $m$ is a bounded Fourier multiplier, we have
	\begin{align}
		\label{energy approx}
		\sum_{k\neq0}\int_{\mathbb{R}}E_\delta(t)d\xi \approx \frac{1}{M^2} 	\norm{\xkh{\alpha^{-\frac{1}{4}}\widehat{\Pi}}(t)}  _{H^s}^2&+ \norm{\xkh{\alpha^{-\frac{3}{4}} \widehat{\Psi}}(t)}_{H^s}^2 \nonumber \\
		&+ \norm{\xkh{\alpha^{-\frac{3}{4}} (\widehat{F}-\nu M^2\widehat{\Psi})}(t)}_{H^s}^2 ,
	\end{align}
	where we have used 
	\begin{align}
		1\leq 1+M^2 \frac{(\partial_{t}\alpha)^2}{\alpha^3} + 	M^2\frac{4\pi}{\alpha+4\pi M^2 \delta} \leq 1+(4\pi+1)M^2 ,\nonumber
	\end{align}
	namely
	\begin{align}
		1\leq 1+M^2 \frac{(\partial_{t}\alpha)^2}{\alpha^3} + 	M^2\frac{4\pi}{\alpha+4\pi M^2 \delta} \lesssim 1 .\nonumber
	\end{align}
	
	\begin{lemma} \label{lemma 1}
		Suppose $s\geq0$, $0<\mu \leq \frac{1}{2}$, $M>0$ satisfy $M\leq\mathrm{min}\{\mu^{-1},\lambda^{-\frac{1}{2}},\nu^{-\frac{1}{3}}\}$ and $\gamma:=\frac{M\nu^\frac{1}{3}}{4}$. If $\eta^{in}_{\pm}\in  H^{s+1}(\mathbb{T}\times\mathbb{R})$ , $\psi^{in}_{\pm},\omega^{in}_{\pm} \in  H^s(\mathbb{T}\times\mathbb{R})$ and $\eta^{in}_{\pm,0}=\psi^{in}_{\pm,0}=\omega^{in}_{\pm,0}=0$, then
		\begin{align}
			\label{bound 1 in lemma 1}
			\sum_{k\neq0}\int_{\mathbb{R}}E_{\delta=1}(t)d\xi \lesssim \mathrm{e}^{-\frac{\nu^\frac{1}{3}}{16}t}  (C^{+}_{in,s})^2 , \\
			\label{bound 2 in lemma 1}
			\sum_{k\neq0}\int_{\mathbb{R}}E_{\delta=0}(t)d\xi \lesssim \mathrm{e}^{-\frac{\nu^\frac{1}{3}}{16}t}  (C^{-}_{in,s})^2 .
		\end{align}
	\end{lemma}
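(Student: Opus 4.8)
The plan is to prove the pointwise-in-frequency differential inequality $\frac{d}{dt}E_\delta(t)\leq-\frac{\nu^{1/3}}{16}E_\delta(t)$ for each fixed $k\neq0$ and $\xi\in\mathbb{R}$, integrate it by Gr\"onwall, and then sum over $k\neq0$ and integrate in $\xi$. The ion case $\delta=1$ and the electron case $\delta=0$ are handled in one stroke, since they differ only through the harmless factor $4\pi M^2\delta$ (note $\alpha\geq k^2\geq1$ for $k\neq0$). Because $E_\delta$ is coercive and equivalent to the weighted $H^s$ norms by (\ref{coercive 2})--(\ref{energy approx}), such a bound immediately yields (\ref{bound 1 in lemma 1})--(\ref{bound 2 in lemma 1}) once the initial energy is controlled.

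First I would derive the evolution equations for $Z_1,Z_2,Z_3$. Combining (\ref{unified form equ 1})--(\ref{unified form equ 3}) with the multiplier ODE for $m$ and differentiating the weights $\alpha^{-1/4},\alpha^{-3/4}$ and $\langle k,\xi\rangle^s$, one finds $\partial_t Z_1=-\frac{\partial_t m}{m}Z_1-\frac14\frac{\partial_t\alpha}{\alpha}Z_1-\frac{\alpha^{1/2}}{M}Z_2$, together with analogous identities for $Z_2,Z_3$ that carry the viscous dissipations $-\mu\alpha Z_2$ and $-\nu\alpha Z_3$, the skew coupling $+\frac{\alpha^{1/2}}{M}Z_1$ in the $Z_2$-equation, and the remaining couplings dictated by (\ref{unified form equ 2}) and (\ref{equation satisfied by F- nu M^2 Psi}). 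Differentiating $E_\delta$, the diagonal pieces $\frac12|Z_2|^2$ and $\frac12|Z_3|^2$ produce $-(\mu\alpha+\frac{\partial_t m}{m})|Z_2|^2$ and $-(\nu\alpha+\frac{\partial_t m}{m})|Z_3|^2$, and the key property (\ref{property about m}) together with $\mu\geq\nu$ bounds both by $-\nu^{1/3}|Z_2|^2$ and $-\nu^{1/3}|Z_3|^2$. Since the $\Pi$-equation has no dissipation, decay for $Z_1$ must be manufactured from the cross term $-2\gamma\alpha^{-1/2}\mathrm{Re}(\bar Z_1 Z_2)$: inserting $\partial_t Z_1$ and $\partial_t Z_2$, its time derivative generates $-\frac{\gamma}{M}|Z_1|^2+\frac{\gamma}{M}|Z_2|^2$, and the choice $\gamma=\frac{M\nu^{1/3}}{4}$ turns these into the dissipation $-\frac{\nu^{1/3}}{4}|Z_1|^2$ and a reciprocal $+\frac{\nu^{1/3}}{4}|Z_2|^2$ that is reabsorbed by the $Z_2$ dissipation.

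The main obstacle is the term created by the self-consistent electric field. Differentiating $\frac12|Z_2|^2$ and using the coupling $4\pi\frac{\alpha}{\alpha+4\pi M^2\delta}\widehat\Pi$ in (\ref{unified form equ 2}) produces $4\pi M\frac{\alpha^{1/2}}{\alpha+4\pi M^2\delta}\mathrm{Re}(\bar Z_1 Z_2)$; this carries no viscous smallness and a coefficient behaving like $1/(|k|t)$, so it is neither absorbable by Young's inequality against the dissipation nor integrable in time, and a naive Gr\"onwall argument fails. The resolution is precisely the weight $M^2\frac{4\pi}{\alpha+4\pi M^2\delta}|Z_1|^2$ built into $E_\delta$: through the $-\frac{\alpha^{1/2}}{M}Z_2$ part of $\partial_t Z_1$, its time derivative contributes $-4\pi M\frac{\alpha^{1/2}}{\alpha+4\pi M^2\delta}\mathrm{Re}(\bar Z_1 Z_2)$, which exactly cancels the bad term, while the leftover contributions (from $\frac{\partial_t m}{m}$, from $\partial_t\alpha$, and from differentiating the weight's coefficient) are lower order and absorbed by the dissipation. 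I would also verify, as recorded in (\ref{coercive 2}), that adding this positive weight together with the symmetrizing weights $M^2\frac{(\partial_t\alpha)^2}{\alpha^3}|Z_1|^2$ and $\frac{M}{2}\frac{\partial_t\alpha}{\alpha^{3/2}}\mathrm{Re}(\bar Z_1 Z_2)$, tuned in the spirit of \cite{Pu and Zhou and Bian} to cancel the inviscid coupling $\frac{\partial_t\alpha}{\alpha}\widehat\Psi$, keeps $E_\delta$ coercive.

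Finally I would collect all remaining couplings, namely the $-\frac{2k^2}{\alpha}\widehat\Gamma$ interaction in (\ref{unified form equ 2}) rewritten through $\widehat\Gamma=(\widehat F-\nu M^2\widehat\Psi)+\nu M^2\widehat\Psi-\widehat\Pi$, and each of the $\nu M^2$-weighted terms feeding $Z_3$ in (\ref{equation satisfied by F- nu M^2 Psi}), and absorb them into the three dissipations by Young's inequality, crucially using the smallness $M\leq\min\{\mu^{-1},\lambda^{-1/2},\nu^{-1/3}\}$, which furnishes $\mu M\leq1$, $\lambda M^2\leq1$ and $\nu M^2\leq\nu^{1/3}$. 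This yields $\frac{d}{dt}E_\delta(t)\leq-\frac{\nu^{1/3}}{16}E_\delta(t)$, hence $E_\delta(t)\leq e^{-\nu^{1/3}t/16}E_\delta(0)$. Summing over $k\neq0$ and integrating in $\xi$, and estimating the initial energy via (\ref{energy approx}) by $(C^\pm_{in,s})^2$ (at $t=0$ one has $\alpha=k^2+\xi^2\geq1$, so the weights are bounded and the $H^{s+1}$-control of $\Pi^{in}$ in $C^\pm_{in,s}$ is more than enough), gives the two claimed bounds. The most delicate point is the exact cancellation of the electric-field term and the simultaneous preservation of coercivity, which is what forces the particular form of $E_\delta$.
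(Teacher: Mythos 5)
Your overall architecture (the coercive functional $E_\delta$, the cancellation of the electric-field coupling by the weight $M^2\frac{4\pi}{\alpha+4\pi M^2\delta}|Z_1|^2$, the choice $\gamma=\frac{M\nu^{1/3}}{4}$, and the role of the mixed terms in producing dissipation for $Z_1$) matches the paper. But there is a genuine gap at the decisive step: the pointwise inequality $\frac{\mathrm{d}}{\mathrm{dt}}E_\delta\leq-\frac{\nu^{1/3}}{16}E_\delta$ that you claim is false. Differentiating the weight $\alpha^{-3/4}$ in $Z_3$ produces the term $-\frac{3}{4}\frac{\partial_t\alpha}{\alpha}|Z_3|^2$ in $\frac{\mathrm{d}}{\mathrm{dt}}E_\delta$ (the first piece of $\mathcal{I}_3$ in the paper). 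For $t<\frac{\xi}{k}$ one has $\partial_t\alpha<0$, so this is a \emph{growth} term of size up to $\frac{3}{4}|Z_3|^2$; it carries no factor of $\nu$ and cannot be absorbed by $-\big(\frac{\partial_t m}{m}+\nu\alpha\big)|Z_3|^2$, whose lower bound \eqref{property about m} is only $\nu^{1/3}|Z_3|^2$, nor is $\frac{\partial_t\alpha}{\alpha}$ integrable in time uniformly in $(k,\xi)$. Unlike the analogous $\pm\frac14\frac{\partial_t\alpha}{\alpha}$ terms for $Z_1,Z_2$, which the mixed term $\frac{M}{2}\frac{\partial_t\alpha}{\alpha^{3/2}}\mathrm{Re}(\bar Z_1Z_2)$ is built to cancel, nothing in $E_\delta$ compensates this $Z_3$ contribution. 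Your proposal never mentions it: the ``$\nu M^2$-weighted terms feeding $Z_3$'' you list all come from the right-hand side of \eqref{equation satisfied by F- nu M^2 Psi} and are harmless, whereas the problematic term comes from the weight itself.

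The paper handles it by a cut-off, $-\frac34\frac{\partial_t\alpha}{\alpha}|Z_3|^2\leq-\frac34\frac{\partial_t\alpha}{\alpha}\chi_{t<\frac{\xi}{k}}|Z_3|^2+\frac{2k^2}{\alpha}|Z_3|^2$, and after Gr\"onwall the cut-off part exponentiates to the transient factor $\big(\alpha(0)/\alpha(\tfrac{\xi}{k})\big)^{3/2}\lesssim\langle k,\xi\rangle^3$, see \eqref{estimate of E(t)}. This three-derivative loss is precisely why the lemma is stated with $C^{\pm}_{in,s}$ (containing $\|\Pi^{in}\|_{H^{s+1}}$ and the \emph{unweighted} $H^s$ norms of $\Psi^{in}$ and $F^{in}-\nu M^2\Psi^{in}$) rather than with the weighted initial energy $\sum_{k\neq0}\int E_\delta(0)\,d\xi$; your aside that the $H^{s+1}$ control of $\Pi^{in}$ ``is more than enough'' indicates the point has been missed, and Remark \ref{remark on loss of derivative} of the paper is devoted exactly to it. A smaller inaccuracy: the time-integrable errors with multipliers $C_M\frac{k^2}{\alpha}$ and $\frac{\partial_t m}{m}$ are not absorbed by the dissipation either; they survive in the differential inequality and are controlled in Gr\"onwall only because their time integrals are bounded, costing a constant. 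That part is cosmetic; the $Z_3$ term is the real gap, and it is also the reason the paper introduces the multiplier $w$ in Subsection 3.2 to obtain estimates without derivative loss.
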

	
	Combining the above Lemma and (\ref{energy approx}), we easily get the proof of Proposition \ref{proposition for th 1}.
	\begin{proof}
		(Proof of Lemma \ref{lemma 1}) To apply Gr\"onwall's inequality for the proof of (\ref{bound 1 in lemma 1}) and (\ref{bound 2 in lemma 1}), we begin by computing the time derivative of $E_\delta(t)$.
		From the definition of $Z_1(t),Z_2(t),Z_3(t)$, we have
		\begin{align}
			\label{derivative of Z_1}
			\partial_{t}Z_1=&-\frac{\partial_{t}m}{m} Z_1-\frac{1}{4}\frac{\partial_{t}\alpha}{\alpha}Z_1-\frac{1}{M}\alpha^\frac{1}{2}Z_2 ,\\
			\label{derivative of Z_2}
			\partial_{t}Z_2=&-\left(\frac{\partial_{t}m}{m}+\mu \alpha\right) Z_2+ \frac{1}{4}\frac{\partial_{t}\alpha}{\alpha}Z_2+\left(\frac{1}{M}\alpha^\frac{1}{2}+2M\frac{k^2}{\alpha^\frac{3}{2}}+M\frac{4\pi \alpha^\frac{1}{2}}{\alpha+4\pi M^2 \delta}\right) Z_1  \nonumber \\
			&-\frac{2k^2}{\alpha}Z_3-2\nu M^2 \frac{k^2}{\alpha}Z_2 ,\\
			\label{derivative of Z_3}
			\partial_{t}Z_3=&-\left(  \frac{\partial_{t}m}{m}+\nu \alpha  \right)Z_3 -\frac{3}{4}\frac{\partial_{t}\alpha}{\alpha}Z_3+\nu (\mu-\nu)M^2\alpha Z_2-\nu M^2\frac{\partial_{t}\alpha}{\alpha}Z_2 \nonumber \\
			&-\nu M^3 \left(  \frac{2k^2}{\alpha^\frac{3}{2}} +\frac{4\pi \alpha^\frac{1}{2}}{\alpha+4\pi M^2 \delta} \right)Z_1 +\nu M^2 \frac{2k^2}{\alpha}Z_3+2\nu^2M^4\frac{k^2}{\alpha}Z_2 .
		\end{align}
		From \eqref{derivative of Z_1}\eqref{derivative of Z_2}\eqref{derivative of Z_3}, direct computation yields
		\begin{align}
			\label{derivative of 1st term in energy}
			\frac{1}{2}\frac{\mathrm{d}}{\mathrm{dt}}\vert Z_1 \vert ^2=-\frac{\partial_{t}m}{m}\vert Z_1 \vert ^2 -\frac{1}{4}\frac{\partial_{t}\alpha}{\alpha}\vert Z_1 \vert ^2-\frac{\alpha^\frac{1}{2}}{M}\mathrm{Re}(\bar{Z}_1Z_2) ,
		\end{align}
		\begin{align}
			\label{derivative of 4th term in energy}
			\frac{1}{2}\frac{\mathrm{d}}{\mathrm{dt}}\vert Z_2 \vert ^2 =&-\left(  \frac{\partial_{t}m}{m}+\mu \alpha \right)\vert Z_2 \vert ^2+\frac{1}{4}\frac{\partial_{t}\alpha}{\alpha}\vert Z_2 \vert ^2+\frac{1}{M}\alpha^\frac{1}{2}\mathrm{Re}(Z_1\bar{Z}_2) 
			\nonumber \\
			&+2M\frac{k^2}{\alpha^\frac{3}{2}}\mathrm{Re}(Z_1\bar{Z}_2)
			  +M\alpha^\frac{1}{2}\frac{4\pi}{\alpha+4\pi M^2 \delta}\mathrm{Re}(Z_1\bar{Z}_2)\nonumber\\
			 &-\frac{2k^2}{\alpha}\mathrm{Re}(Z_3\bar{Z}_2)-2\nu M^2 \frac{k^2}{\alpha} \vert Z_2 \vert ^2 ,
		\end{align}
	and
		\begin{align}
			\label{derivative of 5th term in energy}
			\frac{1}{2} \frac{\mathrm{d}}{\mathrm{dt}} \vert Z_3 \vert ^2 =&- \left( \frac{\partial_{t}m}{m}+\nu \alpha  \right) \vert Z_3 \vert ^2-\frac{3}{4}\frac{\partial_{t}\alpha}{\alpha}\vert Z_3 \vert ^2+\nu(\mu-\nu)M^2\alpha \mathrm{Re}(Z_2\bar{Z}_3)
			\nonumber\\
			&-\nu M^2\frac{\partial_{t}\alpha}{\alpha} \mathrm{Re}(Z_2\bar{Z}_3)
				-\nu M^3 \left( \frac{2k^2}{\alpha^\frac{3}{2}}+\alpha^\frac{1}{2}\frac{4\pi}{\alpha+4\pi M^2\delta} \right)\mathrm{Re}(Z_1\bar{Z}_3) \nonumber \\
			&+\nu M^2\frac{2k^2}{\alpha}\vert Z_3 \vert ^2+2\nu ^2 M^4\frac{k^2}{\alpha}\mathrm{Re}(Z_2\bar{Z}_3) .
		\end{align}
	The term with coefficient $-\frac{\alpha^\frac{1}{2}}{M}$ in (\ref{derivative of 1st term in energy}) can be canceled by (\ref{derivative of 4th term in energy}). The last third term in (\ref{derivative of 4th term in energy}) generates from electric field. And the corresponding multiplier $M\alpha^\frac{1}{2}\dfrac{4\pi}{\alpha+4\pi M^2 \delta}$ is not integrable on $\mathbb{R}$ with respect to $t$. Thus, we encounter considerable challenges when using Gr\"onwall's inequality. By means of careful construction, we introduce the term $M^2\dfrac{4\pi}{\alpha+4\pi M^2 \delta} \vert Z_1 \vert ^2$ in the energy functional and the time derivative
		\begin{align}
			\label{derivative of 3rd term in energy}
			\frac{\mathrm{d}}{\mathrm{dt}} \left(  \frac{1}{2}M^2\frac{4\pi}{\alpha+4\pi M^2 \delta} \vert Z_1 \vert ^2 \right)
			=&-\frac{1}{2}M^2\partial_{t}\alpha \frac{4\pi}{(\alpha+4\pi M^2 \delta)^2}\vert Z_1 \vert ^2-M^2\frac{\partial_{t}m}{m}\frac{4\pi}{\alpha+4\pi M^2 \delta} \vert Z_1 \vert ^2 \nonumber \\
			&-\frac{1}{4}M^2\frac{\partial_{t}\alpha}{\alpha}\frac{4\pi }{\alpha+4\pi M^2 \delta}\vert Z_1 \vert ^2-M\alpha^\frac{1}{2}\frac{4\pi}{\alpha+4\pi M^2 \delta}\mathrm{Re}(\bar{Z}_1Z_2)  .
		\end{align}
	generates a term that is opposite in sign to the last third term in (\ref{derivative of 4th term in energy}).
	
	There are terms appearing in (\ref{derivative of 1st term in energy}) and (\ref{derivative of 4th term in energy}) with undesirable coefficient $\frac{\partial_{t}\alpha}{\alpha}$, which is also not integrable on $\mathbb{R}$ with respect to $t$, significantly impacting the application of Gr\"onwall's inequality. The reason we introduced the first mixed term in $E_\delta(t)$ is that its time derivative can eliminate these undesirable terms. The time derivative of the first mixed term is
		\begin{align}
			\label{derivative of 6th term in energy}
			\frac{\mathrm{d}}{\mathrm{dt}} \left( \frac{M}{4} \frac{\partial_{t}\alpha}{\alpha ^\frac{3}{2}} \mathrm{Re}(\bar{Z}_1Z_2) \right)
			=&\frac{M}{4}\left(  \frac{2k^2}{\alpha^\frac{3}{2}}-\frac{3}{2}\frac{(\partial_{t}\alpha)^2}{\alpha^\frac{5}{2}}  \right)\mathrm{Re}(\bar{Z}_1Z_2) -\frac{M}{2}\frac{\partial_{t}m}{m}\frac{\partial_{t}\alpha}{\alpha^\frac{3}{2}}\mathrm{Re}(\bar{Z}_1Z_2)\nonumber \\
			&-\mu \frac{M}{4} \frac{\partial_{t}\alpha}{\alpha^\frac{1}{2}}\mathrm{Re}(\bar{Z}_1Z_2)-\nu M^3 \frac{k^2\partial_{t}\alpha}{2\alpha^\frac{5}{2}}\mathrm{Re}(\bar{Z}_1Z_2)\nonumber \\
			&-M\frac{k^2\partial_{t}\alpha}{2\alpha^\frac{5}{2}}\mathrm{Re}(\bar{Z}_1Z_3) 
			-\frac{1}{4}\frac{\partial_{t}\alpha}{\alpha}\left(  \vert Z_2 \vert ^2 -  \vert Z_1 \vert ^2 \right) \nonumber \\
			&+ \left( M^2 \frac{k^2\partial_{t}\alpha}{2\alpha^3} +M^2 \frac{\pi}{\alpha+4\pi M^2\delta} \frac{\partial_{t}\alpha}{\alpha} \right) \vert Z_1 \vert ^2 .
		\end{align}
	To obtain dissipation for $Z_1$, we introduce the second mixed term in the energy functional. The time derivative of second mixed term is
		\begin{align}
			\label{derivative of 7th term in energy}
			\frac{\mathrm{d}}{\mathrm{dt}} \left(  -\gamma\alpha^{-\frac{1}{2} } \mathrm{Re}(\bar{Z}_1Z_2)  \right)=&\frac{\gamma}{2}\frac{\partial_{t}\alpha}{\alpha^\frac{3}{2}}\mathrm{Re}(\bar{Z}_1Z_2) +2\gamma \frac{\partial_{t}m}{m} \alpha^{-\frac{1}{2}} \mathrm{Re}(\bar{Z}_1Z_2) +\gamma \mu \alpha^{\frac{1}{2} } \mathrm{Re}(\bar{Z}_1Z_2) \nonumber \\
			&+2\gamma \nu M^2\frac{k^2}{\alpha^\frac{3}{2}}\mathrm{Re}(\bar{Z}_1Z_2)+2\gamma \frac{k^2}{\alpha^\frac{3}{2}}\mathrm{Re}(\bar{Z}_1Z_3) +\frac{1}{M} \gamma  \vert Z_2 \vert ^2   \nonumber \\
			&-\frac{\gamma}{M} \left(  1+2M^2\frac{k^2}{\alpha^2} +M^2 \frac{4 \pi}{\alpha+4\pi M^2\delta} \right) \vert Z_1 \vert ^2 .
		\end{align}		
	The term $M^2 \dfrac{(\partial_{t}\alpha)^2}{\alpha^3} \vert Z_1 \vert ^2 $ in $E_\delta(t)$ guarantees the coercive property of $E_\delta(t)$ after the introduction of two mixed terms. Directly computing, we get its time derivative as follow
		\begin{align}
			\label{derivative of 2nd term in energy}
			\frac{M^2}{2} \frac{\mathrm{d}}{\mathrm{dt}} \left( \frac{(\partial_{t}\alpha)^2}{\alpha^3} \vert Z_1 \vert ^2 \right)
			=& M^2 \left(   \frac{2k^2\partial_{t}\alpha}{\alpha^3}-\frac{3}{2}\frac{(\partial_{t} \alpha)^3}{\alpha^4}   \right)  \vert Z_1 \vert ^2- M^2 \frac{\partial_{t} m}{m}
			\frac{(\partial_{t}\alpha)^2}{\alpha^3} \vert Z_1 \vert ^2\nonumber \\
			&-\frac{M^2}{4}\frac{(\partial_{t}\alpha)^3}{\alpha^4}\vert Z_1 \vert ^2- M\frac{(\partial_{t}\alpha)^2}{\alpha^\frac{5}{2}} \mathrm{Re}(\bar{Z}_1Z_2) .
		\end{align}

		Hence, rearranging the terms appearing in (\ref{derivative of 1st term in energy})-(\ref{derivative of 2nd term in energy}), we get the time derivative of the energy functional $E_\delta (t)$ as follows
		\begin{align}
			\label{derivative of E(t)}
			\frac{\mathrm{d}}{\mathrm{dt}}E_\delta(t)=&-\left( \frac{\partial_{t}m}{m}+\mu\alpha \right) \vert Z_2 \vert ^2
			-\left(\frac{\partial_{t}m}{m}+\frac{\gamma}{M}\left(1+2M^2\frac{k^2}{\alpha^2}+M^2\frac{4 \pi}{\alpha+4\pi M^2\delta} \right)\right)  \vert Z_1 \vert ^2 \nonumber \\
			&-\left( \frac{\partial_{t}m}{m}+\nu \alpha \right) \vert Z_3 \vert ^2  +\sum_{i=1}^{6}\mathcal{D}_i+\sum_{i=1}^{6}\mathcal{I}_i ,
		\end{align}
		where the diffusion error terms are
		\begin{align}
			&\mathcal{D}_1=\frac{\gamma}{M}\vert Z_2 \vert ^2,
			\quad
			\mathcal{D}_2=\gamma\mu\alpha^\frac{1}{2}\mathrm{Re}(\bar{Z}_1Z_2),\nonumber \\
			&\mathcal{D}_3=\nu (\mu-\nu)M^2\alpha \mathrm{Re}(Z_2\bar{Z}_3),
			\quad
			\mathcal{D}_4=-\nu M^2\frac{\partial_{t}\alpha}{\alpha}\mathrm{Re}(Z_2\bar{Z}_3),\nonumber \\
			&\mathcal{D}_5=-\frac{\mu M}{4}\frac{\partial_{t}\alpha}{\alpha^\frac{1}{2}}\mathrm{Re}(\bar{Z}_1Z_2),
			\quad
			\mathcal{D}_6=-\nu M^3\alpha^\frac{1}{2}\frac{4\pi}{\alpha+4\pi M^2\delta}\mathrm{Re}(\bar{Z}_1Z_3),\nonumber
		\end{align}
		which will be absorbed by the negative terms in (\ref{derivative of E(t)}).
		The integrable error terms are
		\begin{align}
			\mathcal{I}_1=&\bigg(  M^2\left( \frac{5k^2\partial_{t}\alpha}{2\alpha^3}-\frac{7}{4}\frac{(\partial_{t}\alpha)^3}{\alpha^4} \right) +M^2\frac{\pi}{\alpha+4\pi M^2\delta}\frac{\partial_{t}\alpha}{\alpha}-M^2\frac{\partial_{t}\alpha2\pi}{(\alpha+4\pi M^2\delta)^2}\nonumber\\
			&-M^2\frac{4\pi}{\alpha+4\pi M^2\delta}
			\frac{\partial_{t}m}{m}-\frac{1}{4}M^2 \frac{4\pi}{\alpha+4\pi M^2\delta}\frac{\partial_{t}\alpha}{\alpha}\bigg) \vert Z_1 \vert ^2 , \nonumber \\
			\mathcal{I}_2=&M\bigg( \frac{1}{2}\left( \frac{\gamma}{M}-\frac{\partial_{t}m}{m}\right) \frac{\partial_{t}\alpha}{\alpha^\frac{3}{2}}+\left(\frac{5}{2}+2\gamma\nu M\right)\frac{k^2}{\alpha^\frac{3}{2}}-\frac{11}{8}\frac{(\partial_{t}\alpha)^2}{\alpha^\frac{5}{2}}\bigg) \mathrm{Re}(\bar{Z}_1Z_2) \nonumber \\
			&-\nu \frac{k^2M^3\partial_{t}\alpha}{2\alpha^\frac{5}{2}}\mathrm{Re}(\bar{Z}_1Z_2)+2\gamma\frac{\partial_{t}m}{m}\alpha^{-\frac{1}{2}}\mathrm{Re}(\bar{Z}_1Z_2) ,\nonumber \\
			\mathcal{I}_3=&-\frac{3}{4}\frac{\partial_{t}\alpha}{\alpha}\vert Z_3 \vert ^2+2\nu M^2 \frac{k^2}{\alpha}\vert Z_3 \vert ^2 ,\nonumber \\
			\mathcal{I}_4=&M\bigg( 2\left(\frac{\gamma}{M}-\nu M^2\right)\frac{k^2}{\alpha^\frac{3}{2}}-\frac{k^2\partial_{t}\alpha}{2\alpha^\frac{5}{2}} \bigg)\mathrm{Re}(\bar{Z}_1Z_3) ,\nonumber \\
			\mathcal{I}_5=&\bigg(-2\frac{k^2}{\alpha}+2\nu^2M^4\frac{k^2}{\alpha} \bigg)\mathrm{Re}(\bar{Z}_2Z_3) ,\nonumber \\
			\mathcal{I}_6=&-M^2\frac{\partial_{t}m}{m}\frac{(\partial_{t}\alpha)^2}{\alpha^3}\vert Z_1 \vert ^2-2\nu \frac{k^2M^2}{\alpha}\vert Z_2 \vert ^2 . \nonumber
		\end{align}
		among which the Fourier multipliers are integrable on $\mathbb{R}$ with respect to $t$, except the first multiplier in $\mathcal{I}_3$. In addition, this term is tricky  and leads to loss of derivatives when estimating the energy functional.

		We then continue with providing suitable bounds on the terms $\mathcal{D}_i$. 
		In order to absorb $\mathcal{D}_1$ by the first term in (\ref{derivative of E(t)}), recalling the crucial property (\ref{property about m}), we choose $\gamma=\frac{M\nu^\frac{1}{3}}{4}$ and we have 
		\begin{align}
			\label{bound for D_1}
			\abs{\mathcal{D}_1}=\abs{\frac{\gamma}{M}\abs{Z_2}^2}=\frac{\nu^\frac{1}{3}}{4}\abs{Z_2}^2.
		\end{align}
		Since $M\leq\mathrm{min}\{\mu^{-1},\lambda^{-\frac{1}{2}},\nu^{-\frac{1}{3}}\}$ and $\gamma=\frac{M\nu^\frac{1}{3}}{4}$, we deduce $\gamma \leq \frac{1}{4}, M\mu \leq 1$ and 
		\begin{align}
			\abs{\mathcal{D}_2}
			\leq \frac{\gamma \mu}{2} \alpha \abs{Z_2}^2+\frac{\gamma \mu}{2}\abs{Z_1}^2 
			\leq \frac{\mu \alpha}{8}\abs{Z_2}^2 +\frac{\gamma}{2M}\abs{Z_1}^2 .
		\end{align}
	Due to $M\leq \lambda^{-\frac{1}{2}}$, we bound $\mathcal{D}_3$ as follow  
		\begin{align}
			\abs{\mathcal{D}_3}
			\leq \nu \alpha \abs{Z_2}\abs{Z_3}
			\leq \frac{\nu \alpha}{2} \abs{Z_2}^2+\frac{\nu \alpha}{2} \abs{Z_3}^2.
		\end{align}
		Applying $M\leq\mathrm{min}\{\mu^{-1},\lambda^{-\frac{1}{2}},\nu^{-\frac{1}{3}}\}$ again, we get $\nu M^2 \leq 1$ and
		\begin{align}
			\label{estimate on D_4}
			\abs{\mathcal{D}_4}
			\leq \frac{\nu}{2} \xkh{\frac{1}{2}\abs{Z_3}^2+2M^4 \frac{(\partial_{t}\alpha)^2}{\alpha^2}\abs{Z_2}^2} 
			\leq \frac{\nu}{4} \abs{Z_3}^2 + 4M^2 \frac{k^2}{\alpha}\abs{Z_2}^2 .
		\end{align}
		To control $\mathcal{D}_5$, we have
		\begin{align}
			\label{estimate on D_5}
			\abs{\mathcal{D}_5}
			\leq \frac{\mu}{2} \xkh{ \frac{\alpha}{8} \abs{Z_2}^2+\frac{1}{2}M^2 \frac{(\partial_{t}\alpha)^2}{\alpha^2} \abs{Z_1}^2       } 
			 \leq \frac{\mu \alpha}{16} \abs{Z_2}^2 + \mu M^2 \frac{k^2}{\alpha }\abs{Z_1}^2.
		\end{align} 
		Differing from the case of Navier-Stokes equations, we have an additional term $\mathcal{D}_6$, the presence of which is due to the existence of self-consistent electric field. The coefficient of $\mathcal{D}_6$ contains $M^3$, but the multiplier is not integrable in time, hindering the use of Gr\"onwall's inequality. However, thanks to viscosity, we bound $\mathcal{D}_6$ as follow
		\begin{align}
			\label{estimate on D_6}
			\abs{\mathcal{D}_6} 
			\leq \frac{\nu}{2} \abs{Z_3} \frac{M^3 8 \pi}{\alpha^\frac{1}{2}}\abs{Z_1} 
			\leq \frac{\nu^2}{8} \abs{Z_3}^2 + \frac{M^6(8\pi)^2}{2\alpha} \abs{Z_1}^2 .
		\end{align}
		It should be noted that the last terms in (\ref{estimate on D_4})-(\ref{estimate on D_6}) can not be absorbed by the negative terms in (\ref{derivative of E(t)}), but their integrability property in time is sufficient for needs of Gr\"onwall's lemma.

		For the purpose of using Gr\"onwall's inequality, we now proceed to bound $\mathcal{I}_1,\mathcal{I}_2, \cdots ,\mathcal{I}_6$ using Fourier multipliers with time integrability on $\mathbb{R}$. Recalling $\frac{\partial_{t}m}{m}\geq 0$, we have
		\begin{align}
			\mathcal{I}_1
			\leq& \xkh{M^2\xkh{\frac{5k^2}{2\alpha}+\frac{7}{4}\frac{k^2}{\alpha}} +M^2 \frac{\pi k^2}{\alpha} +2\pi M^2\frac{k^2}{\alpha} +\frac{1}{4} M^2 4\pi \frac{k^2}{\alpha} } \abs{Z_1}^2	\nonumber \\
			\leq &\: C_1 M^2 \frac{k^2}{\alpha } \abs{Z_1}^2.
		\end{align}
		Since $\gamma\leq\frac{1}{4}$, $M\leq\mathrm{min}\{\mu^{-1},\lambda^{-\frac{1}{2}},\nu^{-\frac{1}{3}}\}$ and $\abs{\partial_{t}\alpha}\leq 2 \abs{k} \alpha^\frac{1}{2}$, we get
			\begin{align}
				\abs{ \mathcal{I}_2 }
				\leq& \xkh{C_2(1+M)\frac{k^2}{\alpha}+\xkh{\frac{M}{2}+\frac{1}{2}} \frac{\partial_{t}m}{m}  } \xkh{ \abs{Z_1}^2 +\abs{Z_2}^2},\\
				\abs{\mathcal{I}_4}
				\leq& C_4(1+M)\frac{k^2}{\alpha} \xkh{\abs{Z_1}^2+\abs{Z_3}^2} ,\\
				\abs{\mathcal{I}_5} 
				\leq& C_5 (1+M^2) \frac{k^2}{\alpha} \xkh{ \abs{Z_2}^2+\abs{Z_3}^2 } .
			\end{align}
		Unlike $\mathcal{I}_1,\cdots, \mathcal{I}_5$, the sign of $\mathcal{I}_6$ is clear. We bound $\mathcal{I}_6$ as follow
			\begin{align}
					\mathcal{I}_6\leq& 0 .
			\end{align}
		As for $\mathcal{I}_3$, since the multiplier of the first term is not integrable in time on $\mathbb{R}$, we proceed to make a cut-off as follows.
			\begin{align}
				\mathcal{I}_3=&-\frac{3}{4}\frac{\partial_{t}\alpha}{\alpha}\vert Z_3 \vert ^2+2\nu M^2 \frac{k^2}{\alpha}\vert Z_3 \vert ^2 \nonumber \\
				\leq& -\frac{3}{4}\frac{\partial_{t}\alpha}{\alpha} \chi_{t<\frac{\xi}{k}} \abs{Z_3}^2 +\frac{2k^2}{\alpha}\abs{Z_3}^2 .			
			\end{align}
		As a result of the cut-off on $\frac{\partial_{t}\alpha}{\alpha}$, the loss of derivatives will occur later on.
		
		Combining the above estimates on $\mathcal{D}_i$ and $\mathcal{I}_i$, we infer
		\begin{align}
			\label{estimate 1 of derivative of E(t)}
			\frac{\mathrm{d}}{\mathrm{dt}}E_\delta(t)
			\leq  
			\Bigg[&
				-\frac{\nu ^\frac{1}{3}}{4} \xkh{1+2M^2\frac{k^2}{\alpha^2}+M^2\frac{4\pi}{\alpha+4\pi M^2\delta}}+\frac{1}{8} \nu^\frac{1}{3} 
			\Bigg]   \abs{Z_1}^2 \nonumber \\
			+\Bigg[&
			-\frac{\partial_{t} m}{m}-\mu \alpha+ \frac{\nu^\frac{1}{3}}{4}+\frac{\mu \alpha}{8} +\frac{\nu \alpha }{2} 
			+ \frac{\mu \alpha}{16}
			\Bigg] \abs{Z_2}^2 \nonumber \\
			+\Bigg[&
			-\frac{\partial_{t} m}{m} -\nu \alpha +\frac{\nu \alpha}{2} +\frac{\nu}{4} +\frac{\nu^2}{8} 
			\Bigg] \abs{Z_3}^2 \nonumber \\
			+\Bigg(&   
			-\frac{\partial_{t} m}{m} + \mu M^2 \frac{k^2}{\alpha} 
			+\frac{M^6(8\pi)^2k^2}{2\alpha} 
			+C_1M^2\frac{k^2}{\alpha} \nonumber \\
			&+C_2(1+M)\frac{k^2}{\alpha} +\xkh{\frac{M}{2}+\frac{1}{2}} \frac{\partial_{t} m}{m}
			 +C_4(1+M)\frac{k^2}{\alpha}
			\Bigg) \abs{Z_1}^2 \nonumber \\
			+ 
			\Bigg( &
				4M^2\frac{k^2}{\alpha}
				+C_2(1+M)\frac{k^2}{\alpha} +\xkh{\frac{M}{2}+\frac{1}{2}}\frac{\partial_{t} m}{m}  
				+C_5(1+M^2)\frac{k^2}{\alpha}
			\Bigg) \abs{Z_2}^2 \nonumber \\
			+ \Bigg( &
				\frac{2k^2}{\alpha} 
				+C_4(1+M)\frac{k^2}{\alpha}+C_5(1+M^2)\frac{k^2}{\alpha}
			\Bigg) \abs{Z_3}^2 \nonumber \\
			-\frac{3}{4}& \frac{\partial_{t} \alpha}{\alpha}  \chi_{t<\frac{\xi}{k}} \abs{Z_3}^2 .
		\end{align}
		Recalling the crucial property $\frac{\partial_{t}m}{m}+\nu \alpha \geq \nu^\frac{1}{3}$, three brackets $[ \:\:\:]$ on the right-hand side of (\ref{estimate 1 of derivative of E(t)})  can be bounded as follow
		\begin{align}
			&[ \:\:\:]_1\leq -\frac{\nu^\frac{1}{3}}{16} \xkh{ 1+4M^2\frac{k^2}{\alpha^2}+M^2\frac{4\pi}{\alpha+4\pi M^2\delta}} ,\nonumber\\
			&[ \:\:\:]_2
			\leq -\frac{\nu^\frac{1}{3}}{16} ,\nonumber \\
			&[ \:\:\:]_3  \leq -\frac{1}{16}\nu^\frac{1}{3}.\nonumber
		\end{align}
		Combining the above estimates, we have
		\begin{align}
			\frac{\mathrm{d}}{\mathrm{dt}}E_\delta(t)\leq &
			-\frac{\nu^\frac{1}{3}}{16}
			\Bigg(
			\xkh{1+4M^2\frac{k^2}{\alpha^2}+M^2\frac{4\pi}{\alpha+4\pi M^2\delta}} \abs{Z_1}^2+\abs{Z_2}^2+\abs{Z_3}^2 
			\Bigg) \nonumber \\
			&+ \Bigg(
			C_M\frac{k^2}{\alpha}+ \xkh{\frac{M}{2}+\frac{1}{2}} \frac{\partial_{t} m}{m}
			\Bigg) \xkh{\abs{Z_1}^2+\abs{Z_2}^2+\abs{Z_3}^2 } \nonumber \\
			&-\frac{3}{4} \frac{\partial_{t} \alpha}{\alpha}  \chi_{t<\frac{\xi}{k}} \abs{Z_3}^2 ,
		\end{align}
		where $C_M=C(1+M^6)$, and $C$ is a suitable constant , not depending on $M$.

		We claim that $\forall t$, $\abs{Z_3(t)}^2\leq 2E_\delta(t)$. Otherwise, there exists $t_0$, such that $\abs{Z_3(t_0)}^2 > 2E_\delta(t_0)$. Form (\ref{coercive 2}), we have 
		\begin{align}
			E_\delta(t_0)\geq& \frac{1}{4} \Bigg(   \left( 	1+M^2\frac{(\partial_{t}\alpha)^2}{\alpha^3} + M^2\frac{4\pi}{\alpha+4\pi M^2\delta} \right)  \vert Z_1\vert^2 +\vert Z_2\vert^2 +2\vert Z_3\vert^2  \Bigg) (t_0) \nonumber \\
			>&\frac{1}{4} \Bigg(  \left(1+M^2\frac{(\partial_{t}\alpha)^2}{\alpha^3} + M^2\frac{4\pi}{\alpha+4\pi M^2\delta} \right) \vert Z_1\vert^2+\vert Z_2\vert^2\Bigg) (t_0) +E_\delta(t_0).
		\end{align}
		It follows that $E_\delta(t_0)>E_\delta(t_0)$, which is a contradiction. Thus, $\abs{Z_3(t)}^2\leq 2E_\delta(t)$, for all $t$.
		Hence, we get 
		\begin{align}
			\frac{\mathrm{d}}{\mathrm{dt}}E_\delta(t)\leq -\frac{\nu^\frac{1}{3}}{16} E_\delta(t) 
			+ \Bigg( 
			-\frac{3}{2}\frac{\partial_{t} \alpha}{\alpha}  \chi_{t<\frac{\xi}{k}} 
			+4 C_M\frac{k^2}{\alpha}+4 \xkh{\frac{M}{2}+\frac{1}{2}} \frac{\partial_{t} m}{m}
			\Bigg) E_\delta(t) .
		\end{align}
		Applying Gr\"onwall's inequality, we obtain
		\begin{align}
			\label{estimate of E(t)}
			E_\delta(t)
			&\leq C \:  \mathrm{e}^{-\frac{\nu^\frac{1}{3}}{16}t}
			\frac{\xkh{\alpha(0)}^\frac{3}{2}}{\xkh{\alpha(t)}^\frac{3}{2}\chi_{t<\frac{\xi}{k}} + \xkh{\alpha(\frac{\xi}{k})}^\frac{3}{2}\chi_{t>\frac{\xi}{k}}    } 
			E_\delta(0) \nonumber \\
			&\leq C \:  \mathrm{e}^{-\frac{\nu^\frac{1}{3}}{16}t} \lan{k,\xi}^3 E_\delta(0)   . 
		\end{align}
		Summing $k$ and integrating $\xi$ in (\ref{estimate of E(t)}), we deduce that
		\begin{align}
			\sum_{k\neq0} \int_{\mathbb{R}} E_\delta(t) d\xi \lesssim 
			 &\: \mathrm{e}^{-\frac{\nu^\frac{1}{3}}{16}t} \sum_{k\neq0} \int_{\mathbb{R}} \lan{k,\xi}^3 \xkh{  \abs{Z_1(0)}^2 +\abs{Z_2(0)}^2 +\abs{Z_3(0)}^2 }  d\xi \nonumber \\\
			\lesssim & \: \mathrm{e}^{-\frac{\nu^\frac{1}{3}}{16}t}
			\xkh{
				\frac{1}{M^2}\norm{\Pi^{in}}^2_{H^{s+1}}
				+ \norm{\Psi^{in}}^2_{H^s}
				+ \norm{F^{in}-\nu M^2 \Psi^{in}}^2_{H^s}
			} . \nonumber 
		\end{align}
		Hence, recalling the definition of $C^\pm_{in,s}$, we conclude the proof of Lemma \ref{lemma 1}.
	\end{proof}
	
	\begin{remark}
		\label{remark on loss of derivative}
		The time derivative of $\abs{Z_3}^2$ generates the term $-\frac{3}{4} \frac{\partial_{t} \alpha}{\alpha} \abs{Z_3}^2$. Since $\frac{\partial_{t}\alpha}{\alpha}$ is not integrable on $\mathbb{R}$, we need to make a cut-off when applying Gr\"onwall's inequality and eventually it leads to the appearance of $\lan{k,\xi}^3$, \ie  the loss of derivatives. In addition, $-\frac{\partial_{t} \alpha}{\alpha} \abs{Z_3}^2$ will always exist in the time derivative of $\abs{Z_3}^2$, unless $j=0$, i.e. $Z_3=\lan{k,\xi}^s m^{-1}\alpha^0 \xkh{\widehat{F}-\nu M^2 \widehat{\Psi}} $, but as discussed in Remark \ref{remark on definition of Z_1 Z_2 Z_3}, $j\leq -\frac{3}{4}$ is needed. 
	\end{remark}
	Finally, we present the proof of Corollary \ref{corollary for th 1} .
	\begin{proof}(Proof of Corollary \ref{corollary for th 1})
		To bound $\norm{\widehat{\Gamma}_{\pm}(t)}_{H^s} $, where $\Gamma_\pm=F_\pm-\Pi_\pm$, we firstly derive the unified form equation satisfied by $\Gamma_\pm$ and $F_\pm$ from (\ref{unified form equ 1}) and (\ref{unified form equ 3}), namely
		\begin{align}
			\label{equation of F Pi }
			\partial_{t}\widehat{F}=-\nu \alpha \widehat{F} + \nu \alpha \widehat{\Pi}.
		\end{align}
		 By Duhamel's formular, we have
		\begin{align}
			\widehat{F}(t)=\: \mathrm{e}^{\mathcal{L}_\nu(t)} \widehat{F}^{in} 
			+ \nu \int_{0}^{t} 
			\mathrm{e}^{ \mathcal{L}_\nu(t)-\mathcal{L}_\nu(\tau) } \alpha(\tau)\widehat{\Pi}(\tau) d\tau ,
		\end{align}
		where 
			\begin{align}
				\mathcal{L}_\nu(t,k,\xi):=-\nu \int_{0}^{t} \alpha(\tau,k,\xi) d\tau. \nonumber
			\end{align}
		The decay estimate on $\widehat{\Pi}$  has been established in Proposition \ref{proposition for th 1}. Thus, we now proceed to the decay estimate on $\widehat{F}$, which leads us to study the solution operator $\mathrm{e}^{\mathcal{L}_\nu (t)}$.
		
		Directly computing, we obtain
		\begin{align}
			\mathcal{L}_\nu(t,k,\xi)=-\nu t \xkh{\frac{1}{3}k^2t^2+k^2+\xi^2-\xi k t}. \nonumber
		\end{align}
		Since 
		\begin{align}
			\label{estimate on L_nu}
			\frac{1}{3}k^2t^2+k^2+\xi^2-\xi k t
			\geq \frac{1}{12} k^2t^2, 
		\end{align}	
		we get 
		\begin{align}
			\norm{\mathrm{e}^{\mathcal{L}_\nu(t)}f}_{H^s}
			\leq \norm{ \mathrm{e}^{-\frac{1}{12}\nu t^3} f }_{H^s} 
			\lesssim \mathrm{e}^{-\frac{1}{12}\nu^\frac{1}{3} t}  \norm{f}_{H^s} .\nonumber
		\end{align}
		From the definition of $\mathcal{L}_\nu$ and (\ref{estimate on L_nu}), we bound  $\mathcal{L}_\nu(t)-\mathcal{L}_\nu(\tau)$ as follows
		\begin{align}
			\mathcal{L}_\nu(t,k,\xi)-\mathcal{L}_\nu(\tau,k,\xi)
			= \mathcal{L}_\nu(t-\tau,k,\xi-k\tau) 
			\leq -\frac{1}{12} \nu^\frac{1}{3} (t-\tau) .\nonumber
		\end{align}
		Therefore, we get
		\begin{align}
			\norm{\widehat{F}(t)}_{H^s}
			\lesssim \: \mathrm{e}^{-\frac{1}{12}\nu^\frac{1}{3} t} \norm{\widehat{F}^{in}}_{H^s} +\nu \int_{0}^{t}  \mathrm{e}^{ -\frac{1}{12} \nu^\frac{1}{3} (t-\tau) } \norm{  \alpha(\tau)\widehat{\Pi}(\tau) }_{H^s} d\tau .
			\nonumber
		\end{align}
		To bound the integrand, we have
		\begin{align}
			\norm{  \alpha(\tau)\widehat{\Pi}_\pm(\tau) }_{H^s}
			&\lesssim M\lan{\tau}^\frac{5}{2} \norm{ \xkh{\frac{1}{M}\alpha^{-\frac{1}{4}}\widehat{\Pi}_\pm}(\tau) }_{H^{s+\frac{5}{2}}} \nonumber \\
			&\lesssim M\lan{\tau}^\frac{5}{2}  \mathrm{e}^{-\frac{1}{32}\nu^\frac{1}{3} \tau} C^{\pm}_{in,s+\frac{5}{2}} , \nonumber
		\end{align}
		where we have used $\alpha(\tau,k,\xi)\lesssim \lan{\tau}^2 \lan{k,\xi}^2$ and Proposition \ref{proposition for th 1}. It follows that
		\begin{align}
			\label{bound for F hat}
			\norm{\widehat{F}_{\pm}(t)}_{H^s}
			\lesssim \: \mathrm{e}^{-\frac{1}{12}\nu^\frac{1}{3} t} \norm{\widehat{F}_{\pm}^{in}}_{H^s} +M\lan{t}^\frac{1}{2} \mathrm{e}^{-\frac{1}{64}\nu^\frac{1}{3} t} C^{\pm}_{in,s+\frac{5}{2}}.
		\end{align}
		Finally, combining (\ref{bound for F hat}) and Proposition \ref{proposition for th 1}, we obtain
		\begin{align}
			\norm{\widehat{\Gamma}_{\pm}}_{H^s} 
			\lesssim & \norm{\widehat{F}_{\pm}}_{H^s}  + M\lan{t}^\frac{1}{2} \norm{   \frac{1}{M} \alpha^{-\frac{1}{4}}\widehat{\Pi}_{\pm}   }_{H^{s+\frac{1}{2}}} \nonumber \\
			\lesssim & \: \mathrm{e}^{-\frac{1}{12}\nu^\frac{1}{3} t} \norm{\widehat{F}_{\pm}^{in}}_{H^s} +M\lan{t}^\frac{1}{2} \mathrm{e}^{-\frac{1}{64}\nu^\frac{1}{3} t} C^{\pm}_{in,s+\frac{5}{2}}
			+  M\lan{t}^\frac{1}{2}  \mathrm{e}^{-\frac{1}{32}\nu^\frac{1}{3} t} C^{\pm}_{in,s+\frac{1}{2}} \nonumber \\
			=& \: \mathrm{e}^{-\frac{1}{12}\nu^\frac{1}{3}t} \norm{\omega^{in}_{\pm}+\eta^{in}_{\pm}}_{H^s}
			+ M \lan{t}^\frac{1}{2} \mathrm{e}^{-\frac{1}{64}\nu^\frac{1}{3}t} C^{\pm}_{in,s+\frac{5}{2}}
			+M \lan{t}^\frac{1}{2} \mathrm{e}^{-\frac{1}{32}\nu^\frac{1}{3}t} C^{\pm}_{in,s+\frac{1}{2}} . \nonumber
		\end{align}
		The proof of Corollary \ref{corollary for th 1} is complete.
	\end{proof}
	\subsection{Dissipation without loss of derivatives}
	\ 
	
	When $\nu=0$ or $\lambda=0$, the term $\nu(\mu-\nu)M^2 \alpha \widehat{\Psi}$ does not appear in (\ref{equation satisfied by F- nu M^2 Psi}). From Remark \ref{remark on definition of Z_1 Z_2 Z_3} and Remark \ref{remark on loss of derivative}, we can choose $j=0$ \ie $Z_3=\langle k,\xi \rangle ^s m^{-1} (\widehat{F}-\nu M^2\widehat{\Psi})$ and the term $- \frac{\partial_{t} \alpha}{\alpha} \abs{Z_3}^2$ resulting in loss of derivatives will not exist. This indicates that the loss of derivatives is likely a result of the technique we used, rather than being inherent in such a system. Thus, in this subsection, we aim to obtain estimates without loss of derivatives, under the assumption $\eta^{in}_{\pm,0}=\psi^{in}_{\pm,0}=\omega^{in}_{\pm,0}=0$.

We consider the following model
\begin{align}
\partial_{t} f= \frac{\partial_{t} \alpha}{\alpha} f -\nu \alpha f,
\end{align}
which is a toy model from the structure of (\ref{unified form equ 2}). For $t\leq \frac{\xi}{k}$, $\frac{\partial_{t} \alpha}{\alpha} \leq 0$ and leads to dissipation on $f$. For $t \geq \frac{\xi}{k}$, $\frac{\partial_{t} \alpha}{\alpha} \geq 0$ and leads growth on $f$. We also note that $\forall t$, $-\nu \alpha f$ is always a dissipation term. If $t$ is far away from  critical times, the growth coming from $\frac{\partial_{t} \alpha}{\alpha} $ can be balanced by the dissipation term $-\nu \alpha f$. That is, for any $\beta >0$, we have
\begin{align}
\label{equ 1 dissipation overcome growth}
\nu \alpha (t,k,\xi)\geq \beta^2 \nu^\frac{1}{3} , \quad  \text{if} \: \abs{t-\frac{\xi}{k}} \geq \beta \nu^{-\frac{1}{3}},  \\
\label{equ 2 dissipation overcome growth}
\frac{\partial_{t} \alpha}{\alpha} (t,k,\xi)\leq \frac{2}{ \sqrt{  1+\xkh{\frac{\xi}{k}-t}^2       }   }
\leq 2\beta^{-1}\nu^\frac{1}{3}  ,\quad \text {if} \: \abs{t-\frac{\xi}{k}} \geq \beta \nu^{-\frac{1}{3}},
\end{align}
Thus, for any fixed $\beta>2$, there holds $\frac{\partial_{t} \alpha}{\alpha} \leq \frac{1}{4} \nu \alpha$, if $ \abs{t-\frac{\xi}{k}} \geq \beta \nu^{-\frac{1}{3}}$.

In order to control the growth near the critical time, for a fixed $\beta \geq 2$ to be chosen later, we introduce the Fourier multiplier used in \cite{Antonelli 2021},
\begin{align}
\begin{cases}
	\partial_{t} w(t,k,\xi)=0   &\text{if} \: t\notin [ \frac{\xi}{k}\:,\: \frac{\xi}{k}+\beta \nu^{-\frac{1}{3}} ] , \\
	\partial_{t} w(t,k,\xi)= \xkh{\frac{\partial_{t} \alpha}{\alpha}w }(t,k,\xi)  
	&\text{if} \: t\in [ \frac{\xi}{k}\:,\: \frac{\xi}{k}+\beta \nu^{-\frac{1}{3}} ] ,  \\
	w(0,k,\xi)=1,
\end{cases}
\end{align}
which is explicitly given by 
\begin{align}
\label{def of w}
w(t,k,\xi)=
\begin{cases}
	1  & \text{if} \: \xi k \geq 0 , \:0\leq t \leq \frac{\xi}{k} ,\\
	1  & \text{if} \: \xi k <0 ,\: \abs{\frac{\xi}{k}} \geq \beta \nu ^{-\frac{1}{3}}, \: t\geq 0 ,\\
	\frac{\alpha}{k^2} & \text{if} \: \frac{\xi}{k}\leq t \leq \frac{\xi}{k}+\beta\nu^{-\frac{1}{3}} , \\
	1+\beta^2\nu^{-\frac{1}{3}}  &  \text{other cases}.
\end{cases}
\end{align}
The $\frac{\partial_{t}w}{w}$ can be considered as a cut-off of $\frac{\partial_{t}\alpha}{\alpha}$, preserving the part near the critical time, eliminating the part far away from the critical time, which can be addressed by the inherent dissipation of this system. 
Then we replace $\alpha^{-\frac{3}{4}}$ by $w^{-\frac{3}{4}}$ in the definition of $Z_1, Z_2, Z_3$, namely introducing the following weighted variables.
 	\begin{align}
 		Z^w_1(t)&:=\frac{1}{M} \lan{k,\xi}^s \xkh{m^{-1} w^{-\frac{3}{4}} \alpha^\frac{1}{2} \widehat{\Pi}}(t) , \\
 		Z^w_2(t)&:=\lan{k,\xi}^s \xkh{m^{-1} w^{-\frac{3}{4}} \widehat{\Psi} }(t)  ,\\
 		Z^w_3(t)&:=\lan{k,\xi}^s \xkh{ m^{-1} w^{-\frac{3}{4}} (\widehat{F}-\nu M^2 \widehat{\Psi}) }(t) .
 	\end{align}
 Directly computing the time derivative of $Z^w_1, Z^w_2, Z^w_3$, we have
 \begin{align}
 	\label{derivative of Z^w_1}
 	\partial_{t}Z^w_1
 	=& -
 	\Bigg(
 	\frac{\partial_{t}m}{m} + \frac{3}{4} \xkh{ \frac{\partial_{t}w}{w} - \frac{\partial_{t}\alpha}{\alpha}}
 	\Bigg) Z^w_1
 	-\frac{1}{4} \frac{\partial_{t}\alpha}{\alpha}Z^w_1 
 	-\frac{1}{M} \alpha^\frac{1}{2} Z^w_2 ,\\
 	\label{derivative of Z^w_2}
 	\partial_{t}Z^w_2
 	=& -
 	\Bigg(
 	\frac{\partial_{t}m}{m} +\mu \alpha + \frac{3}{4} \xkh{ \frac{\partial_{t}w}{w} - \frac{\partial_{t}\alpha}{\alpha}} 
 	\Bigg)Z^w_2 
 	+\frac{1}{4} \frac{\partial_{t}\alpha}{\alpha}Z^w_2 \nonumber \\
 	&+ \xkh{
 		\frac{1}{M} \alpha^\frac{1}{2} + \frac{2Mk^2}{\alpha^\frac{3}{2}}
 		+\frac{4\pi M \alpha^\frac{1}{2}}{\alpha+4\pi M^2\delta}
 	} Z^w_1 
 	-\frac{2k^2}{\alpha}Z^w_3 -2\nu M^2\frac{k^2}{\alpha}Z^w_2 ,\\
 	\label{derivative of Z^w_3}
 	\partial_{t}Z^w_3
 	=&
 	-\xkh{ \frac{\partial_{t}m}{m}+\nu \alpha }Z^w_3 -\frac{3}{4}\frac{\partial_{t}w}{w}Z^w_3
 	+2\nu M^2 \frac{k^2}{\alpha} Z^w_3 \nonumber \\
 	&+\nu (\mu -\nu)M^2 \alpha Z^w_2 -\nu M^2 \frac{\partial_{t}\alpha}{\alpha} Z^w_2 +2 \nu^2M^4 \frac{k^2}{\alpha}Z^w_2 \nonumber \\
 	&-2\nu M^3 \frac{k^2}{\alpha^\frac{3}{2}}Z^w_1 -\nu M^3 \frac{4\pi \alpha^\frac{1}{2}}{\alpha +4\pi M^2\delta}Z^w_1 .
 \end{align}
 Compared (\ref{derivative of Z_3}) with (\ref{derivative of Z^w_3}), 
 the term $-\frac{3}{4}\frac{\partial_{t}\alpha}{\alpha} Z_3$, leading to loss of derivatives, is replaced by $-\frac{3}{4}\frac{\partial_{t}w}{w} Z^w_3$. Thanks to this adjustment, there is no loss of derivatives in the subsequent estimates.
  
 Compared to (\ref{derivative of Z_1})-(\ref{derivative of Z_2}), (\ref{derivative of Z^w_1})-(\ref{derivative of Z^w_2}) have the same structure and leads us to study the lower bound for  $\delta_\beta\xkh{ \frac{\partial_{t}m}{m} +\nu \alpha }  + \frac{\partial_{t}w}{w}- \frac{\partial_{t}\alpha}{\alpha} $, namely Lemma \ref{lemma for th 2}, which is essential for the subsequent analysis. 
\begin{lemma} \label{lemma for th 2}
Let $w$ and $m$ be the Fourier multiplier defined in (\ref{def of w}) and (\ref{def of m}) respectly. Then, $\forall t\geq 0, \eta \in \mathbb{R}, k \in \mathbb{Z} \backslash \{0\}$, there hold
\begin{align}
	\label{estimate 1 in second lemma }
	1\leq w(t,k,\xi) &\leq 1+\beta^2 \nu^{-\frac{2}{3}}  , \\
	\label{estimate 2 in second lemma}
	\xkh{w\alpha^{-1}}(t,k,\xi)&\leq \frac{1}{k^2} .
\end{align}
In addition, for any fixed $\delta_\beta$, satisfying $\mathrm{max}\{2(\beta(\beta^2-1))^{-1}, 4\beta^{-1}\} <\delta_\beta<1$, one has
\begin{align}
	\Bigg(
	\delta_\beta\xkh{\frac{\partial_{t}m}{m}+\nu \alpha} + 	\frac{\partial_{t}w}{w} -\frac{\partial_{t} \alpha}{\alpha}
	\Bigg) (t,k,\xi)
	\geq \delta_\beta \nu^\frac{1}{3} ,\\
	\Bigg(
	\delta_\beta\xkh{\frac{\partial_{t}m}{m}+\nu ^\frac{1}{3}} + \frac{\partial_{t}w}{w} -\frac{\partial_{t} \alpha}{\alpha}
	\Bigg) (t,k,\xi)
	\geq \frac{\delta_\beta}{2}\nu^\frac{1}{3}.
\end{align}
\end{lemma}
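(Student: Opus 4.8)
The plan is to establish the two pointwise bounds (\ref{estimate 1 in second lemma })--(\ref{estimate 2 in second lemma}) directly from the explicit formula (\ref{def of w}), and then to reduce both differential inequalities to the key property (\ref{property about m}) together with the dissipation estimates (\ref{equ 1 dissipation overcome growth})--(\ref{equ 2 dissipation overcome growth}) by a three-region case analysis. Throughout I write $p:=\frac{\xi}{k}$, so that the critical time is $t=p$ and the factorization
\begin{align}
\alpha(t,k,\xi)=k^2\xkh{1+\xkh{p-t}^2}, \quad \frac{\partial_{t}\alpha}{\alpha}(t,k,\xi)=\frac{2(t-p)}{1+(t-p)^2} \nonumber
\end{align}
holds, which makes all the subsequent comparisons transparent.

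For (\ref{estimate 1 in second lemma }) and (\ref{estimate 2 in second lemma}) I would simply inspect the branches of (\ref{def of w}). On the branches with $w=1$ both bounds are immediate, since $w\alpha^{-1}=\alpha^{-1}\leq k^{-2}$. On the branch $\frac{\xi}{k}\leq t \leq \frac{\xi}{k}+\beta\nu^{-\frac{1}{3}}$ one has $w=\frac{\alpha}{k^2}=1+(t-p)^2$, which ranges in $\zkh{1,\,1+\beta^2\nu^{-\frac{2}{3}}}$ and gives $w\alpha^{-1}=k^{-2}$ with equality. Finally, on the remaining region $w$ is frozen at its terminal value $1+\beta^2\nu^{-\frac{2}{3}}$ attained at $t=\frac{\xi}{k}+\beta\nu^{-\frac{1}{3}}$; since there $t-p>\beta\nu^{-\frac{1}{3}}$ forces $\alpha>k^2\xkh{1+\beta^2\nu^{-\frac{2}{3}}}$, both $w\leq 1+\beta^2\nu^{-\frac{2}{3}}$ and $w\alpha^{-1}<k^{-2}$ follow.

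The core of the argument is the two differential inequalities, and the decisive observation is that, by construction of $w$,
\begin{align}
\frac{\partial_{t}w}{w}-\frac{\partial_{t}\alpha}{\alpha}=
\begin{cases}
0 & \text{if } t\in\zkh{p,\,p+\beta\nu^{-\frac{1}{3}}}, \\
-\dfrac{\partial_{t}\alpha}{\alpha} & \text{otherwise}.
\end{cases} \nonumber
\end{align}
Thus I only need to control the extra term $-\frac{\partial_{t}\alpha}{\alpha}$ off the regularization interval, and I split into three regions. (i) If $t\in\zkh{p,\,p+\beta\nu^{-\frac{1}{3}}}$ the extra term vanishes, and the first inequality reduces to (\ref{property about m}) while the second reduces to $\frac{\partial_{t}m}{m}\geq 0$. (ii) If $t<p$, then $\frac{\partial_{t}\alpha}{\alpha}<0$, so $-\frac{\partial_{t}\alpha}{\alpha}>0$ only helps, and again (\ref{property about m}) resp. $\frac{\partial_{t}m}{m}\geq 0$ closes the estimate. (iii) If $t>p+\beta\nu^{-\frac{1}{3}}$, i.e. $\abs{t-p}>\beta\nu^{-\frac{1}{3}}$, I invoke (\ref{equ 1 dissipation overcome growth}) to get $\nu\alpha\geq\beta^2\nu^{\frac{1}{3}}$ and (\ref{equ 2 dissipation overcome growth}) to get $\frac{\partial_{t}\alpha}{\alpha}\leq 2\beta^{-1}\nu^{\frac{1}{3}}$, together with $\frac{\partial_{t}m}{m}\geq 0$. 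For the first inequality this yields
\begin{align}
\delta_\beta\xkh{\frac{\partial_{t}m}{m}+\nu\alpha}-\frac{\partial_{t}\alpha}{\alpha}\geq \delta_\beta\beta^2\nu^{\frac{1}{3}}-2\beta^{-1}\nu^{\frac{1}{3}}\geq\delta_\beta\nu^{\frac{1}{3}}, \nonumber
\end{align}
the last step being exactly $\delta_\beta(\beta^2-1)\geq 2\beta^{-1}$, i.e. $\delta_\beta\geq 2\xkh{\beta(\beta^2-1)}^{-1}$; for the second inequality, replacing $\nu\alpha$ by $\nu^{\frac{1}{3}}$ gives $\delta_\beta\nu^{\frac{1}{3}}-2\beta^{-1}\nu^{\frac{1}{3}}\geq\frac{\delta_\beta}{2}\nu^{\frac{1}{3}}$, which is precisely $\frac{\delta_\beta}{2}\geq 2\beta^{-1}$, i.e. $\delta_\beta\geq 4\beta^{-1}$. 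Both requirements are guaranteed by the hypothesis $\mathrm{max}\{2(\beta(\beta^2-1))^{-1},\,4\beta^{-1}\}<\delta_\beta<1$.

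The only genuinely delicate region is (iii), the tail $t>p+\beta\nu^{-\frac{1}{3}}$ far beyond the critical time, where the multiplier $w$ no longer contributes ($\frac{\partial_{t}w}{w}=0$) and one must absorb the full, positive growth $\frac{\partial_{t}\alpha}{\alpha}$ purely by the viscous dissipation $\delta_\beta\nu\alpha$, respectively by $\delta_\beta\nu^{\frac{1}{3}}$. Balancing these two competing quantities is exactly what dictates the lower thresholds on $\delta_\beta$; the estimates (\ref{equ 1 dissipation overcome growth})--(\ref{equ 2 dissipation overcome growth}) quantify that away from a $\beta\nu^{-\frac{1}{3}}$-neighbourhood of the critical time the dissipation dominates the growth, and the two constraints on $\delta_\beta$ are simply the sharpened forms of this domination tailored to each inequality.
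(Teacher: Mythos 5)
Your proposal is correct and follows essentially the same route as the paper: read off the pointwise bounds from the explicit branches of $w$, then split into the three regions $t<\frac{\xi}{k}$, $t\in[\frac{\xi}{k},\frac{\xi}{k}+\beta\nu^{-\frac{1}{3}}]$, and $t>\frac{\xi}{k}+\beta\nu^{-\frac{1}{3}}$, invoking (\ref{property about m}) in the first two and (\ref{equ 1 dissipation overcome growth})--(\ref{equ 2 dissipation overcome growth}) in the last. Your explicit unwinding of the thresholds $\delta_\beta\geq 2(\beta(\beta^2-1))^{-1}$ and $\delta_\beta\geq 4\beta^{-1}$ is exactly the arithmetic the paper leaves implicit in its final two displayed inequalities.
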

\begin{proof}(Proof of Lemma \ref{lemma for th 2})
From the definition of $w$, we immediately deduce (\ref{estimate 1 in second lemma }) and (\ref{estimate 2 in second lemma}). When $ t\in \left[ \frac{\xi}{k}\:,\: \frac{\xi}{k}+\beta \nu^{-\frac{1}{3}} \right] $, applying property (\ref{property about m}), we have
\begin{align}
	\delta_\beta\xkh{\frac{\partial_{t}m}{m}+\nu \alpha} + 	\frac{\partial_{t}w}{w} -\frac{\partial_{t} \alpha}{\alpha}
	=
	\delta_\beta\xkh{\frac{\partial_{t}m}{m}+\nu \alpha} 
	\geq \delta_\beta \nu^\frac{1}{3} ,\nonumber \\
	\delta_\beta\xkh{\frac{\partial_{t}m}{m}+\nu ^\frac{1}{3}} + \frac{\partial_{t}w}{w} -\frac{\partial_{t} \alpha}{\alpha}
	=  \delta_\beta\xkh{\frac{\partial_{t}m}{m}+\nu ^\frac{1}{3}}
	\geq \frac{\delta_\beta}{2}\nu^\frac{1}{3}. \nonumber
\end{align}
When $ t \leq \frac{\xi}{k} $, we have $\partial_{t} \alpha \leq 0$. Applying property (\ref{property about m}) again, we have 
\begin{align}
	\delta_\beta\xkh{\frac{\partial_{t}m}{m}+\nu \alpha} + 	\frac{\partial_{t}w}{w} -\frac{\partial_{t} \alpha}{\alpha}
	\geq
	\delta_\beta\xkh{\frac{\partial_{t}m}{m}+\nu \alpha} 
	\geq \delta_\beta \nu^\frac{1}{3} ,\nonumber \\
	\delta_\beta\xkh{\frac{\partial_{t}m}{m}+\nu ^\frac{1}{3}} + \frac{\partial_{t}w}{w} -\frac{\partial_{t} \alpha}{\alpha}
	\geq
	\delta_\beta\xkh{\frac{\partial_{t}m}{m}+\nu ^\frac{1}{3}}
	\geq \frac{\delta_\beta}{2}\nu^\frac{1}{3}. \nonumber
\end{align}
When $t\geq \frac{\xi}{k}+\beta \nu^{-\frac{1}{3}}  $, by virtue of (\ref{equ 1 dissipation overcome growth}) and (\ref{equ 2 dissipation overcome growth}), we get
\begin{align}
	\delta_\beta \nu \alpha -\frac{\partial_{t} \alpha}{\alpha}
	\geq \nu^\frac{1}{3}\xkh{ \delta_\beta \beta^2 - 2\beta^{-1}}
	\geq \delta_\beta \nu^\frac{1}{3}, \nonumber \\
	\delta_\beta \nu^\frac{1}{3}  -\frac{\partial_{t} \alpha}{\alpha}
	\geq \nu^\frac{1}{3}\xkh{\delta_\beta-2\beta^{-1}} 
	\geq \frac{\delta_\beta}{2} \nu^\frac{1}{3} .\nonumber 
\end{align}
Thus, the proof of Lemma \ref{lemma for th 2} is established.
\end{proof}
\begin{proposition} \label{proposition for th 2}
Let $s\geq 0$, $\mu \leq \frac{1}{2}$, $M>0$ be such that $M\leq \mathrm{min}\{\mu^{-\frac{1}{2}}, \lambda^{-\frac{1}{2}},\nu^{-\frac{1}{3}}\}$. If $\eta^{in}_{\pm} \in H^{s+1}(\mathbb{T}\times \mathbb{R})$ and $\psi^{in}_{\pm}, \omega^{in}_{\pm} \in H^{s}(\mathbb{T}\times \mathbb{R}) $, then
\begin{align}
	&\frac{1}{M}
	\norm{\xkh{ w^{-\frac{3}{4}}\alpha^\frac{1}{2}\widehat{\Pi}_{\pm}}(t) }_{H^s}
	+ \norm{ \xkh{w^{-\frac{3}{4}}\widehat{\Psi}_{\pm} }(t)}_{H^s}
	+ \norm{\xkh{w^{-\frac{3}{4}} ( \widehat{F}_{\pm}-\nu M^2 \widehat{\Psi}_{\pm}  )}(t)}_{H^s} 
	\nonumber \\
	\lesssim &
	\: \mathrm{e}^{-\frac{\nu^\frac{1}{3}}{32}t}
	\xkh{ \frac{1}{M} \norm{\nabla \eta^{in}_{\pm}}_{H^s} +\norm{\psi^{in}_{\pm}}_{H^s} +\norm{ \eta^{in}_{\pm}+\omega^{in}_{\pm} -\nu M^2 \psi^{in}_{\pm} }_{H^s}   } .
\end{align} 
\end{proposition}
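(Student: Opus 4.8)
The plan is to re-run the weighted-energy argument of Lemma~\ref{lemma 1}, now built on $Z^w_1,Z^w_2,Z^w_3$ in place of $Z_1,Z_2,Z_3$, so that the cut-off on $\frac{\partial_{t}\alpha}{\alpha}$ responsible for the loss of derivatives is replaced by the clean lower bound of Lemma~\ref{lemma for th 2}. Concretely, I would introduce the literal analogue of $E_\delta(t)$,
\begin{align}
E^w_\delta(t)=\frac{1}{2}\bigg(&\xkh{1+M^2\frac{(\partial_{t}\alpha)^2}{\alpha^3}+M^2\frac{4\pi}{\alpha+4\pi M^2\delta}}\abs{Z^w_1}^2+\abs{Z^w_2}^2+\abs{Z^w_3}^2\nonumber\\
&+\frac{M}{2}\frac{\partial_{t}\alpha}{\alpha^{3/2}}\mathrm{Re}(\bar Z^w_1 Z^w_2)-2\gamma\alpha^{-\frac12}\mathrm{Re}(\bar Z^w_1 Z^w_2)\bigg)(t),\nonumber
\end{align}
with the same choice $\gamma=\frac{M\nu^{1/3}}{4}$. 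Since the two mixed terms are dominated by $\abs{Z^w_1}^2+\abs{Z^w_2}^2$ under the hypotheses on $M,\mu,\nu$, the coercivity bounds go through exactly as in (\ref{coercive 2}); moreover, because $m$ and $w$ are bounded (I would use $1\le w\le 1+\beta^2\nu^{-2/3}$ from Lemma~\ref{lemma for th 2}), summing in $k$ and integrating in $\xi$ will relate $\sum_{k\neq 0}\int_{\mathbb R}E^w_\delta\,d\xi$ to the square of the three norms on the left of the statement.

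Next I would compute $\frac{d}{dt}E^w_\delta(t)$ from (\ref{derivative of Z^w_1})--(\ref{derivative of Z^w_3}). The resulting expression mirrors (\ref{derivative of E(t)}) term by term, the only structural change being that the diagonal dissipation of $\abs{Z^w_3}^2$ carries $-\frac34\frac{\partial_{t}w}{w}$ instead of $-\frac34\frac{\partial_{t}\alpha}{\alpha}$, and that the shifts $\frac34\big(\frac{\partial_{t}w}{w}-\frac{\partial_{t}\alpha}{\alpha}\big)$ enter the diagonal parts of $\abs{Z^w_1}^2,\abs{Z^w_2}^2$. The diffusion errors $\mathcal D_i$ and the integrable errors $\mathcal I_i$ are then estimated as in the proof of Lemma~\ref{lemma 1}; here the bound $w\alpha^{-1}\le k^{-2}$ of Lemma~\ref{lemma for th 2} plays the role that $\alpha^{-1}\le k^{-2}$ played before, so every error multiplier is again of the form $C_M\frac{k^2}{\alpha}$ or a bounded multiple of $\frac{\partial_{t}m}{m}$, hence time-integrable on $\mathbb R$.

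The crux, and the main obstacle, is the treatment of the diagonal terms. For $\abs{Z^w_3}^2$ the combination $\frac{\partial_{t}m}{m}+\nu\alpha+\frac34\frac{\partial_{t}w}{w}\ge\nu^{1/3}$ follows at once from (\ref{property about m}) and $\frac{\partial_{t}w}{w}\ge 0$, so no cut-off is needed there. For $\abs{Z^w_1}^2$ and $\abs{Z^w_2}^2$, after the mixed terms are differentiated one is left with growth proportional to $\frac{\partial_{t}\alpha}{\alpha}$, which is precisely absorbed by the combination $\delta_\beta\big(\frac{\partial_{t}m}{m}+\nu\alpha\big)+\frac{\partial_{t}w}{w}-\frac{\partial_{t}\alpha}{\alpha}\ge\delta_\beta\nu^{1/3}$ of Lemma~\ref{lemma for th 2}, once $\beta\ge 2$ and $\delta_\beta\in(\max\{2(\beta(\beta^2-1))^{-1},4\beta^{-1}\},1)$ are fixed; verifying that the surviving diagonal dissipation still dominates every off-diagonal $\frac{k^2}{\alpha}$ error for all three variables simultaneously is the delicate point, and it is exactly what the tuned constants $\gamma,\beta,\delta_\beta$ are designed to guarantee. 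Collecting these estimates gives $\frac{d}{dt}E^w_\delta(t)\le -\frac{\nu^{1/3}}{16}E^w_\delta(t)+\big(C_M\frac{k^2}{\alpha}+C\frac{\partial_{t}m}{m}\big)E^w_\delta(t)$, and Grönwall's inequality yields $E^w_\delta(t)\lesssim \mathrm e^{-\nu^{1/3}t/16}E^w_\delta(0)$, now with no $\lan{k,\xi}^3$ factor, since the error multipliers integrate to an $O(1)$ constant rather than to $(\alpha(0)/\alpha(\tfrac\xi k))^{3/2}$. Finally I would sum over $k\neq 0$, integrate in $\xi$, use $w(0)=1$, $\alpha(0,k,\xi)=k^2+\xi^2$ and Plancherel to identify $E^w_\delta(0)$ with the squared initial norm on the right-hand side, and take square roots to obtain the stated decay at rate $\mathrm e^{-\nu^{1/3}t/32}$.
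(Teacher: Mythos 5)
Your proposal reproduces the paper's own argument essentially verbatim: the same energy functional $E^w_\delta$ with the same mixed terms and $\gamma=\frac{M\nu^{1/3}}{4}$, the same bounds on the $\mathcal{D}^w_i$ and $\mathcal{I}^w_i$ carried over from Lemma \ref{lemma 1}, and the same use of Lemma \ref{lemma for th 2} to absorb the $\frac{\partial_t\alpha}{\alpha}$ growth in the $Z^w_1,Z^w_2$ diagonals (the paper fixes $\beta>48$ and $\delta_\beta=\frac{1}{12}$ so that the surviving dissipation coefficients close, which is exactly the ``delicate point'' you flag). The approach and conclusion are correct and match the paper's proof.
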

\begin{proof} (Proof of Proposition \ref{proposition for th 2})
Define the energy functional as follows
\begin{align}
	E_\delta^w(t)
	=\frac{1}{2}
	\Bigg(
	\xkh{1+M^2\frac{(\partial_{t}\alpha)^2}{\alpha^3}+\frac{M^2 4\pi}{\alpha+4\pi M^2\delta}} \abs{Z^w_1}^2 +\abs{Z^w_2}^2 +\abs{Z^w_3}^2 \nonumber \\
	+\frac{M}{2} \frac{\partial_{t}\alpha}{\alpha^\frac{3}{2}} \mathrm{Re}(\bar{Z}^w_1 Z^w_2) -2 \gamma \alpha^{-\frac{1}{2}} \mathrm{Re}(\bar{Z}^w_1Z^w_2)
	\Bigg) (t) ,
\end{align}
where $\gamma=\frac{M\nu^\frac{1}{3}}{4}$ is a parameter.
$E^w(t)$ is also a coercive energy functional, satisfying 
\begin{align}
	&E_\delta^w(t)\leq \xkh{\bigg( 1+M^2\frac{(\partial_{t}\alpha)^2}{\alpha^3} + 	M^2\frac{4\pi}{\alpha+4\pi M^2\delta}     \bigg) \vert Z^w_1\vert^2 +\vert Z^w_2\vert^2 +\vert Z^w_3\vert^2   }(t) ,\\
	&E_\delta^w(t)\geq \frac{1}{4} \bigg(   \left( 	1+M^2\frac{(\partial_{t}\alpha)^2}{\alpha^3} + M^2\frac{4\pi}{\alpha+4\pi M^2\delta} \right)  \vert Z^w_1\vert^2 +\vert Z^w_2\vert^2 +2\vert Z^w_3\vert^2  \bigg) (t) .
\end{align}
Through a computation procedure analogous to the one in the proof of Lemma \ref{lemma 1}, we obtain
\begin{align}
	\label{derivative of E^w(t)}
	\frac{\mathrm{d}}{\mathrm{dt}} E_\delta^w(t)
	=&-\Bigg(
	\frac{\partial_{t}m}{m}+\mu \alpha +\frac{3}{4}\xkh{\frac{\partial_{t}w}{w}-\frac{\partial_{t}\alpha}{\alpha}}
	\Bigg) \abs{Z^w_2}^2 \nonumber \\
	& -\Bigg(
	\frac{\partial_{t}m}{m}+\frac{3}{4}\xkh{\frac{\partial_{t}w}{w}-\frac{\partial_{t}\alpha}{\alpha}}
	+\frac{\nu^\frac{1}{3}}{4} \xkh{
		1+2M^2\frac{k^2}{\alpha^2} +\frac{M^2 4\pi}{\alpha+4\pi M^2\delta}
	}
	\Bigg)   \abs{Z^w_1}^2 \nonumber \\
	& -\xkh{  \frac{\partial_{t}m}{m}+\nu \alpha   }\abs{Z^w_3}^2 
	+\sum_{i=1}^{6} \mathcal{D}^w_i +\sum_{i=1}^{6}\mathcal{I}^w_i,
\end{align}
 where $\mathcal{D}^w_i$ is identical to $\mathcal{D}_i$ for all $i$ and $\mathcal{I}^w_i$ is identical to $\mathcal{I}_i$ for $i=4,5,6$, except for the replacement of $Z_j$ by $Z^w_j$.
 The definitions of $\mathcal{I}^w_1$ and $\mathcal{I}^w_2$ are 
\begin{align}
	\mathcal{I}^w_1
	=&\Bigg( 
	M^2\xkh{
		\frac{5}{2}\frac{k^2\partial_{t}\alpha}{\alpha^3} -\frac{7}{4}\frac{(\partial_{t}\alpha)^3}{\alpha^4}
	} 
	+ \frac{M^2 \pi}{\alpha+4\pi M^2\delta}\frac{\partial_{t}\alpha}{\alpha}
	-\frac{M^2}{2}\frac{4\pi \partial_{t}\alpha}{(\alpha+4\pi M^2\delta)^2}
	\nonumber \\
	&\:\:\: -\frac{M^2 4\pi}{\alpha+4\pi M^2\delta}\frac{\partial_{t}m}{m}-\frac{1}{4}\ \frac{M^2 4\pi}{\alpha+4\pi M^2\delta}\frac{\partial_{t}\alpha}{\alpha}
	-\frac{3}{4}M^2\xkh{ \frac{\partial_{t}w}{w}-\frac{\partial_{t}\alpha}{\alpha}    }\frac{(\partial_{t}\alpha)^2}{\alpha^3} \nonumber \\
	&-\frac{3}{4}\frac{M^2 4\pi}{\alpha+4\pi M^2\delta}\xkh{ \frac{\partial_{t}w}{w}-\frac{\partial_{t}\alpha}{\alpha}    }
	\Bigg)	\abs{Z^w_1}^2 ,\nonumber \\
	\mathcal{I}^w_2
	=&M
	\Bigg(
	\frac{1}{2}\xkh{\frac{\gamma}{M}-\frac{\partial_{t}m}{m}}\frac{\partial_{t}\alpha}{\alpha^\frac{3}{2}} 
	+\xkh{\frac{5}{2}+2\gamma\nu M}\frac{k^2}{\alpha^\frac{3}{2}}
	-\frac{11}{8} \frac{(\partial_{t}\alpha)^2}{\alpha^\frac{5}{2}}
	\Bigg) \mathrm{Re}(\bar{Z}^w_1Z^w_2) \nonumber \\
	&-\nu M^3 \frac{k^2\partial_{t}\alpha}{2\alpha^\frac{5}{2}}\mathrm{Re}(\bar{Z}^w_1Z^w_2)
	+2\gamma \frac{\partial_{t}m}{m}\alpha^{-\frac{1}{2}}\mathrm{Re}(\bar{Z}^w_1Z^w_2) \nonumber \\
	&-\frac{3}{2}\frac{M}{4}\xkh{ \frac{\partial_{t}w}{w}-\frac{\partial_{t}\alpha}{\alpha}    }\frac{\partial_{t}\alpha}{\alpha^\frac{3}{2}}\mathrm{Re}(\bar{Z}^w_1Z^w_2)
	+\frac{3}{2} \gamma \alpha^{-\frac{1}{2}}\xkh{ \frac{\partial_{t}w}{w}-\frac{\partial_{t}\alpha}{\alpha}    }\mathrm{Re}(\bar{Z}^w_1Z^w_2),\nonumber \\
\end{align}
Upon replacing $Z_j$ by $Z^w_j$ in $\mathcal{I}_1, \mathcal{I}_2$, $\mathcal{I}^w_1$,  There are two more terms in $\mathcal{I}^w_1, \mathcal{I}^w_2$, specifically the terms involving multiplier $\frac{\partial_{t}w}{w}-\frac{\partial_{t}\alpha}{\alpha}$. 
The definition of $\mathcal{I}^w_3$ is
\begin{align}
	\mathcal{I}^w_3
	=\xkh{
			-\frac{3}{4} \frac{\partial_{t}w}{w}+2\nu M^2\frac{k^2}{\alpha}
		} \abs{Z^w_3}^2.
\end{align}
The first term in the bracket is $\frac{\partial_{t}w}{w}$ rather than $\frac{\partial_{t}\alpha}{\alpha}$ in $\mathcal{I}_3$, which leads loss of derivatives.

In addition, it is worth mentioning that the terms  involving multiplier $\frac{\partial_{t}w}{w}-\frac{\partial_{t}\alpha}{\alpha}$ in (\ref{derivative of E^w(t)}) arise naturally, corresponding to the  discarded part while $\frac{\partial_{t}w}{w}$ represents the cut-off on $\frac{\partial_{t}\alpha}{\alpha}$.

We bound $\mathcal{D}^w_1, \mathcal{D}^w_2, \cdots, \mathcal{D}^w_6$ and $\mathcal{I}^w_4, \mathcal{I}^w_5, \mathcal{I}^w_6$ by the same method used for $\mathcal{D}_1, \mathcal{D}_2, \cdots, \mathcal{D}_6$ and $\mathcal{I}_4, \mathcal{I}_5, \mathcal{I}_6$. For $\mathcal{I}^w_1, \mathcal{I}^w_2$, we only need to note that $\frac{\partial_{t}w}{w}-\frac{\partial_{t}\alpha}{\alpha} \leq \frac{\abs{\partial_{t}\alpha}}{\alpha}$ and apply the similar method used for $\mathcal{I}_1, \mathcal{I}_2$. Thus, we get the bound for $\mathcal{D}^w_i$ and $\mathcal{I}^w_i$, except $\mathcal{I}^w_3$, identically to those in the proof of Lemma \ref{lemma 1}, after replacing the $Z^w_j$ by $Z_j$. In view of the definition of $w$ and the restriction on Mach number, we bound $\mathcal{I}^w_3$ as follows
	\begin{align}
		\mathcal{I}^w_3
		\leq \frac{2k^2}{\alpha} \abs{Z^w_3}^2 . \nonumber
	\end{align}
As a consequence of these estimates, we arrive at
\begin{align}
	\label{first estimate of derivative of E^w(t)}
	\frac{\mathrm{d}}{\mathrm{dt}} E_\delta^w(t)
	\leq \Bigg[&
				-\frac{\partial_{t}m}{m} 
				-\frac{3}{4}\xkh{\frac{\partial_{t}w}{w}-\frac{\partial_{t}\alpha}{\alpha}}
				\nonumber \\
				&-\frac{\nu^\frac{1}{3}}{4}
							\xkh{
									1+2M^2 \frac{k^2}{\alpha^2}
									+M^2\frac{4\pi}{\alpha+4\pi M^2\delta}
								}
				+\frac{1}{8}\nu^\frac{1}{3} 
			\Bigg] \abs{Z^w_1}^2
			\nonumber \\
		+\Bigg[&
			-\frac{\partial_{t}m}{m} 
			-\mu \alpha
			-\frac{3}{4} \xkh{\frac{\partial_{t}w}{w}-\frac{\partial_{t}\alpha}{\alpha}}
			+\frac{\nu^\frac{1}{3}}{4}
			+\frac{\mu \alpha}{8}
			+\frac{\nu \alpha}{2}
			+\frac{\mu \alpha}{16}
		\Bigg] \abs{Z^w_2}^2 \nonumber \\	
		+\Bigg[&
			-\frac{\partial_{t}m}{m} 
			-\nu\alpha
			+\frac{\nu\alpha}{2}
			+\frac{\nu}{4}
			+\frac{\nu^2}{8} 
		\Bigg] \abs{Z^w_3}^2  \nonumber \\
		+ \Bigg[&
				\mu M^2 \frac{k^2}{\alpha}
				+\frac{M^6(8\pi)^2}{2\alpha}
				+\widetilde{C}_1M^2\frac{k^2}{\alpha} \nonumber \\
				&+\widetilde{C}_2 (1+M)\frac{k^2}{\alpha} 
				+\xkh{\frac{M}{2}+\frac{1}{2}} \frac{\partial_{t}m}{m}
				+\widetilde{C}_4(1+M) \frac{k^2}{\alpha}
			\Bigg]  \abs{Z^w_1}^2 \nonumber \\
	  +\Bigg[&
			4M^2\frac{k^2}{\alpha} 
			+\widetilde{C}_2 (1+M) \frac{k^2}{\alpha}
			+\xkh{\frac{M}{2}+\frac{1}{2}} 
			\frac{\partial_{t}m}{m}
			+\widetilde{C}_5(1+M^2) \frac{k^2}{\alpha}
		\Bigg] \abs{Z^w_2}^2 \nonumber \\
	 + \bigg[&
			\frac{2k^2}{\alpha}
			+\widetilde{C}_4(1+M)\frac{k^2}{\alpha}
			+\widetilde{C}_5(1+M^2)\frac{k^2}{\alpha}
		\bigg]  \abs{Z^w_3}^2 . 
\end{align}
The multipliers in the last three  brackets $[\:\:\:]$ of (\ref{first estimate of derivative of E^w(t)}) are all integrable in time, while the first three brackets $[\:\:\:]$ contribute to the  decay rate.
Precisely, for fixed $\beta>48$, we have $\mathrm{max}\{2(\beta(\beta^2-1))^{-1}, 4\beta^{-1}\} =\frac{4}{\beta}<\frac{1}{12}<1$. Thus, thanks to Lemma \ref{lemma for th 2}, we obtain
\begin{align}
	\label{pre work 1 for first bracket}
	&\frac{1}{12}\xkh{\frac{\partial_{t}m}{m}+\nu \alpha} + 	\frac{\partial_{t}w}{w} -\frac{\partial_{t} \alpha}{\alpha}
	\geq \frac{1}{12} \nu^\frac{1}{3} ,\\
	\label{pre work 2 for first bracket}
	&\frac{1}{12}\xkh{\frac{\partial_{t}m}{m}+\nu ^\frac{1}{3}} + \frac{\partial_{t}w}{w} -\frac{\partial_{t} \alpha}{\alpha}
	\geq \frac{1}{24}\nu^\frac{1}{3}.
\end{align}
We now proceed to bound the first bracket $[\:\:\:]$ in (\ref{first estimate of derivative of E^w(t)}), using (\ref{pre work 1 for first bracket})
\begin{align}
	[\:\:\:]_1
	\leq&   -\frac{3}{4}
			\Bigg(
			\frac{1}{12}
			\xkh{
				\frac{\partial_{t}m}{m}
				+\nu^\frac{1}{3}
			}
			+\xkh{\frac{\partial_{t}w}{w}-\frac{\partial_{t}\alpha}{\alpha}} 
			\Bigg)
			-\frac{\nu^\frac{1}{3}}{16}
			\xkh{
				1+4M^2 \frac{k^2}{\alpha^2}								+M^2\frac{4\pi}{\alpha+4\pi M^2\delta}
			} \nonumber \\
			\leq& -\frac{\nu^\frac{1}{3}}{16}
			\xkh{
				1+4M^2 \frac{k^2}{\alpha^2}								+M^2\frac{4\pi}{\alpha+4\pi M^2\delta}
			} .
\end{align}
Analogously, by (\ref{pre work 2 for first bracket}), the second bracket in (\ref{first estimate of derivative of E^w(t)}) is bounded as
	\begin{align}
		[\:\:\:]_2\leq -\frac{1}{16} \nu^\frac{1}{3} .
	\end{align}
The bound for the third bracket follows directly from the crucial property (\ref{property about m})
	\begin{align}
		[\:\:\:]_3\leq -\frac{1}{16} \nu^\frac{1}{3} . \nonumber
	\end{align}
 Consequently, we obtain
\begin{align}
	\frac{\mathrm{d}}{\mathrm{dt}} E_\delta^w(t)
	\leq &  -\frac{1}{16} \nu^\frac{1}{3}
	\Bigg(
	\xkh{
		1+4M^2 \frac{k^2}{\alpha^2}							+M^2\frac{4\pi}{\alpha+4\pi M^2\delta}
	} \abs{Z^w_1}^2
	+\abs{Z^w_2}^2
	+\abs{Z^w_3}^2
	\Bigg) \nonumber \\
	& + \Bigg(
	\widetilde{C}_M(1+M^6) \frac{k^2}{\alpha}
	+\xkh{\frac{M}{2}+\frac{1}{2}} \frac{\partial_{t}m}{m}
	\Bigg) 
	\xkh{ \abs{Z^w_1}^2
		+\abs{Z^w_2}^2
		+\abs{Z^w_3}^2 } \nonumber \\
	\leq &   -\frac{1}{16} \nu^\frac{1}{3} E_\delta^w(t)
	+ \Bigg(
			4 \widetilde{C}_M(1+M^6) \frac{k^2}{\alpha}
			+4 \xkh{\frac{M}{2}+\frac{1}{2}} \frac{\partial_{t}m}{m}
   	  \Bigg) E_\delta^w(t) . \nonumber
\end{align}
Then applying Gr\"onwall's Lemma, we get
\begin{align}
	E_\delta^w(t) 
	\leq \widetilde{C}\: \mathrm{e}^{-\frac{1}{16} \nu^\frac{1}{3} t} \: E_\delta^w(0) . \nonumber
\end{align}
In view of the definition of $Z^w_1, Z^w_2, Z^w_3$ and the coercivity property of $E_\delta^w(t)$, we also have
\begin{align}
	\sum_{k\neq 0} \int_{\mathbb{R}} E_\delta^w(t) d\xi
	\approx&	\sum_{k\neq 0} \int_{\mathbb{R}} 
	\xkh{
		\abs{Z^w_1}^2
		+\abs{Z^w_2}^2
		+\abs{Z^w_3}^2
	} (t)
	d \xi \nonumber \\
	\approx & \frac{1}{M^2}
	\norm{\xkh{ w^{-\frac{3}{4}}\alpha^\frac{1}{2}\widehat{\Pi}}(t) }^2_{H^s}
	+ \norm{ \xkh{w^{-\frac{3}{4}}\widehat{\Psi} }(t)}^2_{H^s} \nonumber \\
	&+ \norm{\xkh{w^{-\frac{3}{4}} ( \widehat{F}-\nu M^2 \widehat{\Psi}  )}(t)}^2_{H^s}  , \nonumber
\end{align}
where in the last line we have use the assumption  $\eta^{in}_{\pm,0}=\psi^{in}_{\pm,0}=\omega^{in}_{\pm,0}=0$. Therefore, we obtain
\begin{align}
	&\frac{1}{M^2}
	\norm{\xkh{ w^{-\frac{3}{4}}\alpha^\frac{1}{2}\widehat{\Pi}}(t) }^2_{H^s}
	+ \norm{ \xkh{w^{-\frac{3}{4}}\widehat{\Psi} }(t)}^2_{H^s} 
	+ \norm{\xkh{w^{-\frac{3}{4}} ( \widehat{F}-\nu M^2 \widehat{\Psi}  )}(t)}^2_{H^s} \nonumber \\
	\lesssim&\: \mathrm{e}^{-\frac{1}{16} \nu^\frac{1}{3} t}
	\sum_{k\neq 0} \int_{\mathbb{R}}  
	E_\delta^w(0)   d\xi \nonumber \\
	\lesssim& \: \mathrm{e}^{-\frac{1}{16} \nu^\frac{1}{3} t}
	\xkh{ \frac{1}{M^2} \norm{\nabla \Pi^{in}}^2_{H^s} +\norm{\Psi^{in}}^2_{H^s} +\norm{ F^{in} -\nu M^2 \Psi^{in} }^2_{H^s}   } . \nonumber
\end{align}
Hence, conclude the proof of Proposition \ref{proposition for th 2} .
\end{proof}

Appealing to Proposition \ref{proposition for th 2}, we present the proof of Theorem \ref{th 2} as follows.
\begin{proof} (Proof of Theorem \ref{th 2})
	Firstly, Plancherel's Theorem gives 
\begin{align}
	&\norm{\psi_{\pm}}_{L^2} + \frac{1}{M} \norm{\nabla \eta_{\pm}}_{L^2} + \norm{\omega_{\pm}}_{L^2} \nonumber \\
	=& \norm{\widehat{\Psi}_{\pm}}_{L^2} +\frac{1}{M} \norm{\alpha^\frac{1}{2} \widehat{\Pi}_{\pm}}_{L^2}
	+ \norm{\widehat{\Gamma}_{\pm}}_{L^2} . \nonumber
\end{align}
Thanks to 
\begin{align}
	\abs{\widehat{\Gamma}_{\pm}} 
	\leq 
	\abs{\widehat{F}_{\pm}-\nu M^2 \widehat{\Psi}_{\pm}}
	+ M \frac{1}{M} \alpha^\frac{1}{2} \abs{\widehat{\Pi}_{\pm}}
	+ \nu M^2 \abs{\widehat{\Psi}_{\pm}} , \nonumber
\end{align}
and $\nu M^2 \leq 1$, 
we deduce that 
\begin{align}
	&\norm{\psi_{\pm}}_{L^2} + \frac{1}{M} \norm{\nabla \eta_{\pm}}_{L^2} + \norm{\omega_{\pm}}_{L^2}  \nonumber \\
	\lesssim & \norm{ \widehat{\Psi}_{\pm} }_{L^2}
	+ \frac{1}{M}  \norm{ \alpha^\frac{1}{2} \widehat{\Pi}_{\pm} }_{L^2}
	+ \norm{ \widehat{F}_{\pm}-\nu M^2 \widehat{\Psi}_{\pm}  }_{L^2} . \nonumber
\end{align}
Next, in view of $\beta \geq 48$ chosen in the proof of Proposition \ref{proposition for th 2}, the small viscosity condition $\nu+\lambda \leq \frac{1}{2}$ and Lemma \ref{lemma for th 2}, we obtain
	\begin{align}
		w(t,k,\xi) \lesssim \nu ^{-\frac{1}{2}} , \nonumber
	\end{align}
and consequently
\begin{align}
	&\norm{\psi_{\pm}}_{L^2} + \frac{1}{M} \norm{\nabla \eta_{\pm}}_{L^2} + \norm{\omega_{\pm}}_{L^2}  \nonumber \\
	\lesssim & \nu^{-\frac{1}{2}}
	\xkh{
		\norm{ w^{-\frac{3}{4}} \widehat{\Psi}_{\pm} }_{L^2}
		+ \frac{1}{M} \norm{  w^{-\frac{3}{4}}\alpha^\frac{1}{2} \widehat{\Pi}_{\pm} }_{L^2}
		+ \norm{ w^{-\frac{3}{4}}(\widehat{F}_{\pm}-\nu M^2 \widehat{\Psi}_{\pm}  )  }_{L^2} 
	} \nonumber \\
	\lesssim &\: \nu^{-\frac{1}{2}}
	\mathrm{e}^{-\frac{\nu^\frac{1}{3}}{32}t}
	\xkh{ \frac{1}{M} \norm{\nabla \eta_{\pm}^{in}}_{L^2} +\norm{\psi_{\pm}^{in}}_{L^2} +\norm{ \eta_{\pm}^{in}+\omega_{\pm}^{in} -\nu M^2 \psi_{\pm}^{in} }_{L^2}   } \nonumber \\
	\lesssim &\: \nu^{-\frac{1}{2}}
	\mathrm{e}^{-\frac{\nu^\frac{1}{3}}{32}t}
	\xkh{
		\frac{1}{M} \norm{\nabla \eta_{\pm}^{in}}_{L^2} +\norm{\psi_{\pm}^{in}}_{L^2} 
		+ \norm{\omega_{\pm}^{in}}_{L^2}
	} ,
\end{align}
where in the third line we have applied Proposition \ref{proposition for th 2} and the last line follows from $\nu M^2 \leq 1$ and the periodicity on $x$ direction. To be more precise, 
	\begin{align}
		\label{poincare type inequality}
		&\norm{\eta^{in}_\pm }_{L^2(\mathbb{T} \times \mathbb{R})} = \norm{\widehat{\eta}^{in}_\pm(k,\xi)}_{L^2(\mathbb{Z} \times \mathbb{R})} \nonumber \\
		\leq& \norm{(k,\xi)\widehat{\eta}^{in}_\pm(k,\xi)}_{L^2(\mathbb{Z} \times \mathbb{R})}
		=\norm{\nabla \eta^{in}_\pm}_{L^2(\mathbb{T} \times \mathbb{R})},
	\end{align}
where the $k=0$ mode is zero, assumed at the beginning of this section. In some sence, it can be regarded as Poincar\'e's inequality with the domain being bounded on some direction.

We now proceed to prove the second result in Theorem \ref{th 2}. By Helmholtz decomposition and Plancherel Theorem, we have
	\begin{align}
		&\norm{u_{\pm}}_{L^2} + \frac{1}{M}\norm{\eta_{\pm}}_{L^2} \nonumber \\
		\leq& \norm{ \alpha^{-\frac{1}{2}}\widehat{\Psi}_{\pm} }_{L^2}
		+\norm{ \alpha^{-\frac{1}{2}}\widehat{\Gamma}_{\pm} }_{L^2}
		+\frac{1}{M}\norm{ \widehat{\Pi}_{\pm} }_{L^2} . \nonumber
	\end{align}
From Lemma \ref{lemma for th 2}, we deduce $w^\frac{3}{4} \alpha^{-\frac{1}{2}} \lesssim \nu^{-\frac{1}{6}}$ and consequently
	\begin{align}
		&\norm{u_{\pm}}_{L^2} + \frac{1}{M}\norm{\eta_{\pm}}_{L^2} \nonumber \\
		\lesssim &
		\: \nu^{-\frac{1}{6}} 
		\xkh{
			\norm{w^{-\frac{3}{4}} \widehat{\Psi}_{\pm} 		}_{L^2}
			+\norm{ w^{-\frac{3}{4}} \widehat{\Gamma}_{\pm} }_{L^2}
			+ \frac{1}{M} \norm{ w^{-\frac{3}{4}} \alpha^\frac{1}{2} \widehat{\Pi}_{\pm} }_{L^2}
		} \nonumber \\
		\lesssim &
		\: \nu^{-\frac{1}{6}} 	
		\begin{aligned}[t]
			\bigg(
			\norm{w^{-\frac{3}{4}} \widehat{\Psi}_{\pm} 		}_{L^2}
			&+\norm{w^{-\frac{3}{4}} (\widehat{F}_{\pm}-\nu M^2 		\widehat{\Psi}_{\pm}) }_{L^2}
			+M \frac{1}{M} 	\norm{w^{-\frac{3}{4}}\alpha^\frac{1}{2}\widehat{\Pi}_{\pm}  }_{L^2}\nonumber \\
			&+\nu M^2 \norm{w^{-\frac{3}{4}} \widehat{\Psi}_{\pm} }_{L^2}
			+\frac{1}{M}\norm{w^{-\frac{3}{4}}\alpha^\frac{1}{2}\widehat{\Pi}_{\pm}  }_{L^2} 
			\bigg)						
		\end{aligned}  \nonumber
	\end{align}
In view of $\nu M^2 \leq 1$ and Proposition \ref{proposition for th 2}, it holds that
\begin{align}
	&\norm{u_{\pm}}_{L^2} + \frac{1}{M}\norm{\eta_{\pm}}_{L^2} \nonumber \\
	\lesssim& 
	\: \nu^{-\frac{1}{6}} 	
	\xkh{ 
		\norm{w^{-\frac{3}{4}} \widehat{\Psi}_{\pm} }_{L^2}
		+\norm{w^{-\frac{3}{4}} (\widehat{F}_{\pm}-\nu M^2 \widehat{\Psi}_{\pm}) }_{L^2}
		+\frac{1}{M} 	\norm{w^{-\frac{3}{4}}\alpha^\frac{1}{2}\widehat{\Pi}_{\pm}  }_{L^2}
	} \nonumber \\
	\lesssim&
	\: \nu^{-\frac{1}{6}} 	
	\mathrm{e}^{-\frac{\nu^\frac{1}{3}}{32}t}
	\xkh{ \frac{1}{M} \norm{\nabla \eta_{\pm}^{in}}_{L^2} +\norm{\psi_{\pm}^{in}}_{L^2} +\norm{ \eta_{\pm}^{in}+\omega_{\pm}^{in} -\nu M^2 \psi_{\pm}^{in} }_{L^2}   }\nonumber \\
	\lesssim &
	\: \nu^{-\frac{1}{6}} 	
	\mathrm{e}^{-\frac{\nu^\frac{1}{3}}{32}t}
	\xkh{
		\frac{1}{M} \norm{\nabla \eta_{\pm}^{in}}_{L^2} +\norm{\psi_{\pm}^{in}}_{L^2} 
		+ \norm{\omega_{\pm}
			^{in}}_{L^2}
	} ,
\end{align} 
where in the last line we have also used (\ref{poincare type inequality}).
 Hence, the proof of Theorem \ref{th 2} is now complete.
\end{proof}
There is no loss of derivatives in Proposition \ref{proposition for th 2} and Theorem \ref{th 2}.

\end{document}